\newtheorem{Theo}{Theorem}[section]
\newtheorem{Prop}[Theo]{Proposition}
\newtheorem{Coro}[Theo]{Corollary}
\newtheorem{Lemm}[Theo]{Lemma}
\newtheorem{Defi}[Theo]{Definition}
\newtheorem{Exam}[Theo]{Example}
\newtheorem{Rema}[Theo]{Remark}
\newcommand{\Hcal}{\mathcal{H}}
\newcommand{\T}{\mathbb{T}}
\newcommand{\Bcal}{\mathcal{B}}
\newcommand{\D}{\mathbb{D}}
\newcommand{\C}{\mathbb{C}}
\newcommand{\Z}{\mathbb{Z}}
\def\N{\mathbb{ N}}
\def\R{\mathbb{ R}}
\begin{document}

\title{$\pmb{\mathcal{H}_{p}}$-theory of general Dirichlet series}


\author[Defant]{Andreas Defant}
\address[]{Andreas Defant\newline  Institut f\"{u}r Mathematik,\newline Carl von Ossietzky Universit\"at,\newline
26111 Oldenburg, Germany.
}
\email{defant@mathematik.uni-oldenburg.de}

\author[Schoolmann]{Ingo Schoolmann}
\address[]{Ingo Schoolmann\newline  Institut f\"{u}r Mathematik,\newline Carl von Ossietzky Universit\"at,\newline
26111 Oldenburg, Germany.
}
\email{ingo.schoolmann@uni-oldenburg.de}

\maketitle

\begin{abstract}
\noindent
Inspired by results of Bayart on ordinary Dirichlet series $\sum a_n n^{-s}$, the main purpose of this article is to start an  $\mathcal{H}_p$-theory of general
Dirichlet series $\sum a_n e^{-\lambda_{n}s}$\,.
Whereas the $\mathcal{H}_p$-theory of ordinary Dirichlet series, in view of an ingenious identification of Bohr, may be seen as a sub-theory of Fourier analysis on  the infinite dimensional
torus $\mathbb{T}^\infty$, the $\mathcal{H}_p$-theory of general Dirichlet series is
build as a sub-theory of Fourier analysis on certain compact abelian groups, including the  Bohr compactification $\overline{\R}$ of the reals. Our approach allows to extend various important facts on Hardy spaces
of ordinary Dirichlet series to the much wider  setting of $\mathcal{H}_p$-spaces of general Dirichlet series.
\end{abstract}



%
%
\noindent
\renewcommand{\thefootnote}{\fnsymbol{footnote}}
\footnotetext{2018 \emph{Mathematics Subject Classification}: Primary 43A17, Secondary 30H10, 30B50} \footnotetext{\emph{Key words and phrases:
general Dirichlet series, Hardy spaces, Bohr compactification}
}

\section{Introduction}


Within the last two decades the theory of  ordinary Dirichlet series $\sum a_{n} n^{-s}$ saw a remarkable renaissance which in particular led to the solution of some long-standing problems.

\medskip

 A fundamental object in these investigations
 is given by the Banach space $\Hcal_{\infty}$  of all ordinary Dirichlet series $D:=\sum a_{n} n^{-s}$ which converge and define a bounded, and then necessarily holomorphic, function on the open right half plane $[Re>0]$
 (endowed with the supremum norm on $[Re>0]$).
 \medskip

 One of the celebrated results in this 'ordinary' theory is a result of Hedenmalm, Lindqvist and Seip from  \cite{HLS} which shows that $\Hcal_{\infty}$ equals the Hardy space $H_{\infty}(\T^{\infty})$ on the infinitely dimensional torus as Banach spaces . Let us explain this in more detail.
 The infinitely dimensional torus $\T^{\infty}$ is the  infinite product of $\mathbb{T} = \{w \in \mathbb{C}\colon |w|=1\}$ which forms a natural compact abelian group
 on which the Haar measure is given by the  normalized Lebesgue measure. The characters of this group are the monomials $z \mapsto z^\alpha$, where $\alpha$ is a finite sequence
 of integers, and $H_{\infty}(\T^{\infty})$ denotes the closed subspace of all $f \in L_\infty(\mathbb{T}^\infty)$ such that the Fourier coefficient
 $
 \widehat{f}(\alpha) = \int_{\T^{\infty}} f(w) w^{-\alpha} dw =0\,,
 $
 whenever $\alpha < 0$ (in the sense that some $\alpha_k < 0$). Then there is a unique linear isometry
 $$\Bcal \colon H_{\infty}(\T^{\infty}) \to \Hcal_{\infty}\,,\,\,\,\, f\sim \sum_{\alpha \in \mathbb{N}^{(\mathbb{N})}} \widehat{f}(\alpha) z^\alpha  \mapsto D= \sum_{n\in \N} a_n n^{-s}\,,$$
  which preserves Fourier- and Dirichlet coefficients in the sense that
 \begin{equation} \label{vision}
 \widehat{f}(\alpha) = a_{n}\,\, \text{ whenever  } n= p^\alpha.
 \end{equation}
 The crucial point here is the fact that each natural number has a unique prime number decompostion
 (together with other nontrivial tools like e.g. Diophantine approximation), and the result shows that the theory of ordinary Dirichlet which generate bounded, holomorphic functions on the positive half plane is intimately linked with Fourier analysis on the group $\T^{\infty}$.

\medskip

 More generally, Bayart in \cite{Bayart} developed an $H_p$-theory of Dirichlet series. Recall that the Hardy space $H_p(\mathbb{T}^\infty)\,, \, 1 \leq p \leq \infty$, is the  closed subspace  of all $f \in L_p(\mathbb{T}^\infty)$
 such that $\widehat{f}(\alpha) =0$ if $\alpha<0$. Then the  Banach spaces $\mathcal{H}_p$ of ordinary Dirichlet series by definition (!) is the isometric image of $H_p(\mathbb{T}^\infty)$ under the identification
 from \eqref{vision}.

 \medskip

  But all these ideas fail  for general Dirichlet series.
  Given  a frequency $\lambda:=(\lambda_{n})$ (i.e.  a non-negative strictly increasing sequence of real numbers tending to $\infty$), a
$\lambda$-Dirichlet series is a formal sum
 $$\sum a_{n} e^{-\lambda_{n}s}$$
 with complex Dirichlet coefficients $a_{n}$ and a complex variable $s$. We point out that making the jump from the ordinary case $\lambda=(\log n)$ to arbitrary frequencies reveals challenging consequences.

  \medskip

 Much of the ordinary theory relies on 'Bohr's theorem', the fact that for each ordinary Dirichlet series the abscissas of uniform convergence and boundedness coincide. This phenomenon  fails  for general Dirichlet series. Further due to the fundamental theorem of arithmetics each natural number $n$ has its prime number decomposition $n=\mathfrak{p}^{\alpha}$ and so  the frequency $(\log n)$ can be written as a linear combination of $(\log p_{j})$ with natural coefficients. This intimately links the theory of ordinary Dirichlet series  with the theory of holomorphic functions on polydiscs, and in particular with the theory of polynomials $\sum c_{\alpha} z^{\alpha}$
in finitely many complex variables. One of several consequences is that  $m$-homogeneous Dirichlet series $\sum a_n n^{-s}$, i.e.  $a_n \neq 0$ only if $n$ has $m$ prim factors, are linked with $m$-homogeneous polynomials.
 This way powerful tools enter the game, as e.g. polynomial inequalities (like the Bohnenblust-Hille inequalities, hypercontractivity of convolution with the Poisson kernel, etc.), $m$-linear forms, or  polarization. So Fourier analysis on the infinite dimensional torus $\mathbb{T}^\infty$ and infinite dimensional holomorphy on the
 open unit ball $B_{c_0}$ of $c_0$ enrich the theory of ordinary Dirichlet series considerably. But unfortunately facing  general Dirichlet series   many of these powerful bridges
 seem to collapse. New questions arise which make the theory of general Dirichlet quite  demanding.

 \medskip
We define the space $\mathcal{D}_{\infty}(\lambda)$  analogously to the space $\mathcal{H}_{\infty}$.
 So $\sum a_{n} e^{-\lambda_{n}s}$ belongs to  $\mathcal{D}_{\infty}(\lambda)$ whenever it converges and defines a bounded, and then necessarily holomorphic, function on $[Re>0]$.
 For instance, if $\lambda=(n)$, then looking at the transformation $z=e^{-s}$ we  easily conclude that  $\mathcal{D}_{\infty}((n))$ is simply $H_{\infty}(\D)$, the space of all bounded and holomorphic functions on the open unit ball $\D$. And if $\lambda=(\log n)$, then we are in the  ordinary case. Hence, in both cases the space $\mathcal{D}_{\infty}(\lambda)$ can be described in terms of Fourier analysis, that is, it can be considered as a Hardy space, namely $\mathcal{D}_{\infty}((n))=H_{\infty}(\T)$ and $\mathcal{D}_{\infty}((\log n))=H_{\infty}(\T^{\infty})$. In view of these two examples the following  question arises naturally:
 \emph{Given an arbitrary frequency $\lambda$,
is it possible to describe $\mathcal{D}_{\infty}(\lambda)$ in terms of a sort of Hardy space on a compact abelian group?}

\medskip

Inspired  by ideas of Bohr and Helson we show that in this much more general situation,  the so-called Bohr compactification $\overline{\R}$ of the real numbers $\R$ is a suitable substitute for the infinite dimensional torus $\mathbb{T}^\infty$. We note that $\overline{\R}$ being a compact abelian group carries a Haar measure, contains   $\mathbb{R}$ as a dense subset, and that its dual group may be identified again with $\R$
(see Section~\ref{Bohrcompact} for the precise definitions). So in particular ordinary Dirichlet series can be described in terms of functions on $\overline{\R}$. This might be surprising since in contrast to the infinitely dimensional torus $\T^{\infty}$ the Bohr compactification $\overline{\R}$ is not metrizable and is moreover a sort of one-dimensional object.

\medskip

Motivated by this observation we introduce a Fourier analysis setting for the study of  general Dirichlet series.
We restrict ourserlves to $\lambda$-Dirichlet series with Dirichlet coefficients that  actually are Fourier coefficients defined by functions on compact abelian groups $G$ of a certain type, namely  compact abelian groups allowing a  continuous homomorphism $\beta \colon \R \to G$ with dense range (this includes $\T$ and $\T^{\infty}$).
  Given a frequency $\lambda=(\lambda_n)$, we call such a pair $(G,\beta)$ a $\lambda$-Dirichlet group whenever  every character $e^{-i\lambda_n \cdot}: \mathbb{R}\to \mathbb{T}$
 has an  'extension' $h_{\lambda_n}$ (which then is unique) as a character on $G$:
\begin{equation*}
\begin{tikzpicture}[scale = 0.8]
        \node (G) at (0,0) {$G$};
        \node (T) at (3,0) {$\mathbb{T}$};
        \node (R) at (0,-2) {$\mathbb{R}$};

        \draw[-latex] (G) -- node[above] {$h_{\lambda_n}$} (T);
        \draw[-latex] (R) -- node[below right] {$e^{-i\lambda_n \cdot}$} (T);
        \draw[-latex] (R) -- node[left] {$\beta$} (G);
    \end{tikzpicture}
    \end{equation*}
To see a first non-trivial example, look at the continuous homomorphism
 \[
 \beta_{\T^\infty}: \R  \rightarrow \mathbb{T}^\infty\,, \,\, t \mapsto (p_k^{-it})\,.
 \]
 Then  by Kronecker's approximation theorem the pair
  $(\mathbb{T}^\infty, \beta_{\T^\infty})$ forms a $\lambda$-Dirichlet group of the frequency $\lambda = (\log n)$.

  \medskip

  Now given such a $\lambda$-Dirichlet group $(G,\beta)$ and $1 \leq p \leq \infty$, we define the Hardy space
  $H_{p}^{\lambda}(G)$ of all $f \in L_{p}(G)$ having  Fourier transforms $f: \widehat{G} \to \mathbb{C}$ supported in $\{h_{\lambda_n} \colon n \in \mathbb{N}\}$.
  Then the Hardy space $\mathcal{H}_{p}(\lambda)$ consists of all Dirichlet series
  $$ D = \sum \widehat{f}(h_{\lambda_{n}})e^{-\lambda_{n}s}\,,\,\,\, f \in H_p^\lambda(G)$$
which  together with the norm $\|D\|_{p}:=\|f\|_{p}$
   forms a Banach space.

   \medskip
 In fact it will turn out that $\mathcal{H}_{p}(\lambda)$ is independent of the chosen $\lambda$-Dirichlet group $(G,\beta)$.
 Looking at   $\lambda = (\log n)$ and the $\lambda$-Dirichlet group  $(\mathbb{T}^\infty, \beta_{\T^\infty})$,
Bayart's theory of Hardy space $\mathcal{H}_p$ of Dirichlet series is contained in the $\mathcal{H}_p$-theory of general Dirichlet series we intend to present.

   \medskip

Assuming some restrictions  on the frequency $\lambda$ and given a $\lambda$-Dirichlet group $(G,\beta)$, we prove  in  Theorem \ref{mainresultinfty} that $H_\infty^\lambda(G)$ and  $\mathcal{D}_{\infty}(\lambda)$
by means of the identification $f \mapsto \sum \widehat{f}(h_{\lambda_n}) e^{-\lambda_n s}$  coincide isometrically, and hence in particular
\begin{equation} \label{sane}
\mathcal{D}_{\infty}(\lambda) = \mathcal{H}_{\infty}(\lambda)\,.
\end{equation}
The crucial idea for the proof is inspired by Helson's  theorem~\ref{helson}  from \cite{Helson3}
which shows that almost all vertical limits of a general Dirichlet series $\sum a_n e^{-\lambda_{n}s}$
with square summable coefficients, converge pointwise on the positive half plane. In \cite{DefantSchoolmann3} we extend this result to $\mathcal{H}_{p}(\lambda)$, $1\le p <\infty$, adding the relevant maximal inequality.
We point out that in \cite{HLS} the equality $\mathcal{H}_{\infty}=H_{\infty}(\T^{\infty})$ is derived by first showing that $\Hcal_{\infty}$ equals the space $H_{\infty}(B_{c_{0}})$ of all bounded and holomorphic functions on the unit ball of $c_{0}$ and then proving $H_{\infty}(B_{c_{0}})=H_{\infty}(\T^{\infty})$, which can be seen as an 'infinite dimensional variant' of the classical fact $H_{\infty}(\T)=H_{\infty}(\D)$. Here we give a somewhat  direct proof of $\Hcal_{\infty}=H_{\infty}(\T^{\infty})$ using tools from Fourier analysis on $\R$.

\medskip

Inspired by the isometric equality from~\eqref{sane}, we start
 a sort of systematic structure theory of
$\mathcal{H}_p(\lambda)$-spaces  of general Dirichlet series  modelled along Bayart's $\mathcal{H}_p$-theory of ordinary Dirichlet series.
In Section~\ref{fouriersetting} we invent our setting.
In Section \ref{section4} we introduce   frequencies $\lambda$ of  integer and natural type, and show how to reproduce
important facts on ordinary Dirichlet series (due to Bohr, Bohnenblust-Hille, Hedenmalm-Lindqvist-Seip, and Bayart) from their  analogs on general Dirichlet series (see e.g. Theorem \ref{integertype}  and Corollary~\ref{ordinary}).
In Section \ref{structuretheory} we extend some key  results on ordinary Dirichlet series  to our more general setting. In Theorem~\ref{schauderbasisinHp} we show that
the monomials $e^{-\lambda_{n} s}$ form  a Schauder basis of  $\Hcal_{p}(\lambda),\, 1 < p < \infty$.
For $p=1$ we give upper estimates of the basis constant of the
$e^{-\lambda_{n} s}$ in $\Hcal_{1}(\lambda)$ (Corollary~\ref{basisconstant}). In Theorem~\ref{montel} we generalize Bayart's  Montel type theorem from $\mathcal{H}_\infty= \mathcal{D}_\infty(\log n)$ to  $\mathcal{H}_p(\lambda)$, and in Theorem~\ref{Nabschnitt}
we show how Dirichlet series in $\mathcal{H}_p(\lambda)$ are determined by their
$N$th Abschnitte.  And finally we in  Theorem~\ref{brotherRiesz} finish with a  `brothers Riesz theorem' identifying  $\mathcal{H}_1(\lambda)$
with certain analytic measures on $\lambda$-Dirichlet groups.

\section{General Dirichlet series}
As already mentioned in the introduction a general Dirichlet series is a formal sum of the form $\sum a_{n} e^{-\lambda_{n}s}$, where $(\lambda_{n})$ is a strictly increasing non negative sequence of real numbers (called frequency), $(a_{n})$ a sequence of complex coefficients (called Dirichlet coefficients), and $s$ a complex variable. All (formal) $\lambda$-Dirichlet series are denoted by $\mathcal{D}(\lambda)$.
 The following `abscissas' rule the convergence theory of general  Dirichlet series $D=\sum a_{n}e^{-\lambda_{n}s}$:
\begin{align*}
&
\sigma_{c}(D)=\inf\left \{ \sigma \in \R \mid D \text{ converges on } [Re>\sigma] \right\},
\\&
\sigma_{a}(D)=\inf\left \{ \sigma \in \R \mid D \text{ converges absolutely on } [Re>\sigma] \right\},
\\&
\sigma_{u}(D)=\inf\left \{ \sigma \in \R \mid D \text{ converges uniformly on } [Re>\sigma] \right\}.
\end{align*}
By definition $\sigma_{c}(D)\le \sigma_{u}(D)\le \sigma_{a}(D)$ and in  general all these abscissas differ.
Let us mention that general Dirichlet series always define holomorphic functions on $[Re>\sigma_{c}(D)]$
(see e.g. \cite{HardyRiesz}).

\medskip

In \cite{DefantSchoolmann2} we introduce the following two spaces of $\lambda$-Dirichlet series. The space $\mathcal{D}^{ext}_{\infty}(\lambda)$ of all somewhere convergent $D \in \mathcal{D}(\lambda)$ allowing a holomorphic and bounded extension $f$ to the open right half plane $[Re>0]$ endowed with the norm $\|D\|_{\infty}:=\|f\|_{\infty}=\sup_{[Re>0]}|f(s)|$ (this in fact is a norm, see \cite[Corollary 3.8]{DefantSchoolmann2} or Corollary~\ref{H2}).
Moreover, $\mathcal{D}_{\infty}(\lambda)$ is the space of all  $\lambda$-Dirichlet series which converge and define  a bounded (and then necessarily holomorphic) function on $[Re>0]$. By definition we have $\mathcal{D}_{\infty}(\lambda) \subset \mathcal{D}^{ext}_{\infty}(\lambda)$, and in \cite[Theorem 5.2]{DefantSchoolmann2} it is shown that in general this inclusion is strict and $\mathcal{D}_{\infty}(\lambda)$ fails to be complete.

\medskip

 But there are sufficient conditions on $\lambda$ that forces $\mathcal{D}_{\infty}(\lambda) = \mathcal{D}^{ext}_{\infty}(\lambda)$  and the completeness of $\mathcal{D}_{\infty}(\lambda)$. Let us collect a few 'analytic' conditions.
   The first one was  isolated by Bohr (see \cite{Bohr}), we denote it by $(BC)$:
\begin{equation*} \label{BC}
 \exists ~l = l (\lambda) >0 ~ \forall ~\delta >0 ~\exists ~C>0~\forall ~n \in \N: ~~\lambda_{n+1}-\lambda_{n}\ge Ce^{-(l+\delta)\lambda_{n}};
\end{equation*}
roughly speaking this condition prevents the $\lambda_n$'s from getting too close too fast. Secondly a strictly weaker condition of Landau $(LC)$ (see \cite{Landau}):
$$\forall ~\delta>0 ~\exists ~C>0~ \forall ~n \in \N \colon ~~ \lambda_{n+1}-\lambda_{n}\ge C e^{-e^{\delta\lambda_{n}}}.$$
Another important, say geometric, value associated to $\lambda$ is the maximal width of the strip of convergence and non absolutely convergence:
$$L(\lambda):=\sup_{D \in \mathcal{D}(\lambda)} \sigma_{a}(D)-\sigma_{c}(D)=\limsup_{N\to \infty} \frac{\log(n)}{\lambda_{n}}=\sigma_{c}(\sum e^{-\lambda_{n}s}).$$
Much of the abstract theory of ordinary Dirichlet series is based on a fundamental theorem of Bohr \cite{BohrStrip} which shows that  every $D \in \mathcal{D}^{ext}_\infty((\log n))$ converges uniformly on all half spaces $[\text{Re} > \varepsilon]$ for  all
$\varepsilon >0$, and this then easily implies that $\mathcal{D}_\infty^{\text{ext}}((\log n)) = \mathcal{D}_\infty((\log n))$. In \cite{Bohr} and \cite{Landau} Bohr and Landau extended this result to general Dirichlet series for frequencies
satisfying $(BC)$  or $(LC)$, respectively.

\begin{Defi}
We say that a frequency $\lambda$ satisfies Bohr's theorem, whenever every $D \in \mathcal{D}^{ext}_\infty(\lambda)$ converges uniformly on all half spaces $[\text{Re} > \varepsilon]$ for  all
$\varepsilon >0$.
\end{Defi}

As in the ordinary case the identity principle implies that for every $\lambda$ with Bohr's theorem we have $$\mathcal{D}_\infty^{ext}(\lambda) = \mathcal{D}_\infty(\lambda)\,.$$
Some of the forthcoming results of this article hold under asumptions like `$\lambda$
 satisfies Bohr's theorem' and/or `$\mathcal{D}_\infty^{\text{ext}}(\lambda) = \mathcal{D}_\infty(\lambda)$' and/or  `$ \mathcal{D}_\infty(\lambda)$ is complete'.
 Analyzing  the works of Bohr and Landau, we in  \cite[Remark 4.8 and Theorem 5.1]{DefantSchoolmann2} collect a few conditions which in concrete cases allow
 to verify  these conditions for concrete frequencies.

\begin{Theo} \label{conditions2}
A frequency  $\lambda$ satisfies Bohr's theorem   if one of the following conditions holds:
\begin{enumerate}
\item[(1)] $L(\lambda)=0$,
\item[(2)] $\lambda$ is $\mathbb{Q}$-linearly independent,
\item[(3)] $(LC)$ (this includes $(BC)$).
\end{enumerate}
Moreover, in each of these cases we have  $\mathcal{D}_\infty^{\text{ext}}(\lambda) = \mathcal{D}_\infty(\lambda)$, but non of these conditions is  necessary for Bohr's theorem.
\end{Theo}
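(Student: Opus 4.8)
The plan is to treat the ``Moreover'' part and the three implications by a common mechanism, and to isolate case~(3) as the genuinely hard one. First, as already observed just before the statement, once Bohr's theorem is known for $\lambda$ the identity principle upgrades it to $\mathcal{D}_\infty^{ext}(\lambda)=\mathcal{D}_\infty(\lambda)$: uniform convergence on every $[\re>\varepsilon]$ forces a given $D\in\mathcal{D}_\infty^{ext}(\lambda)$ to converge on $[\re>0]$ to a holomorphic function which agrees with its bounded extension, so $D\in\mathcal{D}_\infty(\lambda)$. Thus everything reduces to verifying Bohr's theorem under each of (1)--(3). For the common engine I would fix $D=\sum a_ne^{-\lambda_ns}\in\mathcal{D}_\infty^{ext}(\lambda)$ with bounded holomorphic extension $f$ on $[\re>0]$, set $M:=\|f\|_\infty$, and first prove the coefficient bound $|a_n|\le M$. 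The key lemma is that the vertical mean $\lim_{T\to\infty}\frac1{2T}\int_{-T}^{T}h(\sigma+it)\,dt$ of a function $h$ bounded and holomorphic on a vertical strip is independent of $\sigma$, which is a one-line consequence of Cauchy's theorem applied to tall rectangles (the horizontal contributions die after division by $2T$). Applying this to $g_\mu(s):=f(s)e^{\mu s}$, which is bounded on every bounded sub-strip of $[\re>0]$, and matching the mean against the value computed inside the region of convergence, I obtain the Bohr coefficient formula $a_ne^{-\lambda_n\sigma}=\lim_{T\to\infty}\frac1{2T}\int_{-T}^{T}f(\sigma+it)e^{i\lambda_nt}\,dt$ for \emph{all} $\sigma>0$; hence $|a_n|\le Me^{\lambda_n\sigma}$ and, letting $\sigma\to0^{+}$, $|a_n|\le M$.

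For (1) and (2) I would then upgrade bounded coefficients to absolute summability, after which Bohr's theorem is immediate. Under~(1) we have $\sigma_c(\sum e^{-\lambda_ns})=L(\lambda)=0$, so $\sum e^{-\lambda_n\sigma}<\infty$ for every $\sigma>0$; combined with $|a_n|\le M$ this gives $\sum|a_n|e^{-\lambda_n\sigma}\le M\sum e^{-\lambda_n\sigma}<\infty$, i.e.\ absolute convergence on $[\re>0]$ and hence, by the Weierstrass test, uniform convergence on each $[\re>\varepsilon]$. Under~(2) the frequencies $(\lambda_n)$, being $\mathbb{Q}$-linearly independent, form a dissociate and hence Sidon subset of the dual of $\overline{\R}$, and I would exploit this through Riesz products: for the unimodular phases $\epsilon_n:=\overline{\sign(a_n)}$ the finite products $P_N=\prod_{n\le N}\bigl(1+\re(\epsilon_ne^{i\lambda_n\cdot})\bigr)$ are nonnegative trigonometric polynomials of mean one whose first-order coefficients are $\tfrac12\epsilon_n$. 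Pairing $f(\sigma+i\,\cdot)$ with $P_N$ via the mean value of the previous step, which is crucially available for every $\sigma>0$, and using $\mathbb{Q}$-independence to ensure that no higher-order or cross frequency of $P_N$ coincides with a single $\lambda_m$, all those terms are killed and I am left with $\tfrac12\sum_{n\le N}|a_n|e^{-\lambda_n\sigma}\le M$. Letting $N\to\infty$ and then $\sigma\to0^{+}$ yields $\sum|a_n|\le2M<\infty$, so again absolute and hence uniform convergence on every $[\re>\varepsilon]$.

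Case~(3) is where I expect the real work, and it cannot be handled by the previous scheme: already for $\lambda=(\log n)$ the elements of $\mathcal{D}_\infty^{ext}$ need not be absolutely convergent, so one must prove uniform convergence of the partial sums $S_N$ \emph{directly}. Here I would follow the classical Bohr--Landau route, representing $f-S_N$ on $[\re>\varepsilon]$ as an integral of $f$ against a suitably concentrated kernel, a de~la~Vall\'ee-Poussin/Bohr type kernel that reproduces the frequencies $\le\lambda_N$ and damps the rest, and then invoking the gap lower bound $\lambda_{N+1}-\lambda_N\ge Ce^{-e^{\delta\lambda_N}}$ of $(LC)$ to show that the transition band between $\lambda_N$ and $\lambda_{N+1}$ is wide enough for the kernel error to tend to zero uniformly. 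The delicate point, and the main obstacle, is the construction of the kernel together with the quantitative estimate tying its concentration to the admissible gap decay; this is precisely the content of the theorems of Bohr and Landau, with $(BC)$ being the more restrictive special case already treated by Bohr.

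Finally, for the non-necessity statement it suffices to exhibit, for each condition separately, a frequency satisfying Bohr's theorem while violating it. The ordinary frequency $\lambda=(\log n)$ satisfies $(LC)$, since its gaps are $\asymp 1/n$ which dominates $e^{-n^{\delta}}$, so Bohr's theorem holds, yet $L((\log n))=1\neq0$ and $(\log n)$ is $\mathbb{Q}$-dependent because $\log(mn)=\log m+\log n$; this defeats the necessity of both (1) and (2). For (3) I would construct a $\mathbb{Q}$-linearly independent frequency whose gaps satisfy $\lambda_{n_k+1}-\lambda_{n_k}<e^{-e^{\lambda_{n_k}}}$ along a subsequence: $\mathbb{Q}$-independence only forbids the countably many hyperplane relations, leaving ample room to prescribe arbitrarily small gaps, so such a $\lambda$ exists, violates $(LC)$, and nonetheless satisfies Bohr's theorem by~(2).
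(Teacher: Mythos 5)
You should first note that the paper does not actually prove this theorem here: it is explicitly imported from the reference \cite{DefantSchoolmann2} (``we in [Remark 4.8 and Theorem 5.1] collect a few conditions\dots''), which in turn rests on the classical papers of Bohr and Landau. Measured against that, your proposal does strictly more than the paper in some places and strictly less in the decisive one. The reduction of the ``Moreover'' part to Bohr's theorem via the identity principle is exactly the paper's own remark. Your treatments of (1) and (2) are correct and self-contained: the coefficient bound $|a_n|\le\|f\|_\infty$ plus $\sigma_c(\sum e^{-\lambda_n s})=L(\lambda)=0$ gives absolute convergence on $[\re>0]$ in case (1), and the Riesz-product/Sidon argument in case (2) gives $\sum|a_n|\le 2\|f\|_\infty$ (the literature even gets the constant $1$, i.e.\ $\mathcal{D}_\infty(\lambda)=\ell_1$ isometrically, but $2$ suffices for uniform convergence). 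The non-necessity examples are also fine: $(\log n)$ kills the necessity of (1) and (2), and a $\mathbb{Q}$-linearly independent frequency with prescribed sub-$(LC)$ gaps along a sparse subsequence kills the necessity of (3), using (2).

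Two genuine issues remain. First, and most importantly, case (3) is not proved: you only describe the Bohr--Landau strategy (a de la Vall\'ee-Poussin/Bohr kernel whose error is controlled by the gap condition). That is precisely the hard analytic core of the theorem, it is not subsumed by your scheme for (1) and (2) (as you correctly observe, absolute summability fails already for $\lambda=(\log n)$), and neither this paper nor your proposal contains the argument; it lives in \cite{Bohr}, \cite{Landau} and \cite{DefantSchoolmann2}. Second, a repairable but real gap in your common engine: you anchor the coefficient formula $a_ne^{-\lambda_n\sigma}=\lim_{T}\frac1{2T}\int_{-T}^{T}f(\sigma+it)e^{i\lambda_n t}\,dt$ by ``matching the mean against the value computed inside the region of convergence''. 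For a general frequency --- in particular under hypothesis (2) alone, where $L(\lambda)=\infty$ is possible --- a somewhere-convergent $D\in\mathcal{D}_\infty^{ext}(\lambda)$ need not converge absolutely or uniformly on any half-plane, so there may be no vertical line on which the termwise computation of the mean is legitimate. The clean fix is already in the paper: by Proposition~\ref{typicalmeans} the typical means $R_x(D)$ converge to $f$ uniformly on every $[\re>\varepsilon]$, and since they are trigonometric polynomials the mean of $f(\sigma+i\cdot)e^{i\mu\cdot}$ exists for every $\sigma>0$, equals $a_ne^{-\lambda_n\sigma}$ for $\mu=\lambda_n$ and vanishes for $\mu\notin(\lambda_n)$; this also supplies the orthogonality you need when pairing $f(\sigma+i\cdot)$ with the Riesz product $P_N$ in case (2). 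With that substitution, your arguments for (1), (2), the ``Moreover'' part and the non-necessity statement are complete, but (3) remains an appeal to the literature rather than a proof.
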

Let us face completeness.
\begin{Theo} \label{conditions}
Let $\lambda$ be a frequency. Then $\mathcal{D}_{\infty}(\lambda)$ is complete if one of the following conditions holds
\begin{enumerate}
\item[(1)] $L(\lambda)=0$,
\item[(2)] $\lambda$ is $\mathbb{Q}$-linearly independent,
\item[(3)] $L(\lambda)<\infty$ and  $\mathcal{D}^{ext}_{\infty}(\lambda)=\mathcal{D}_{\infty}(\lambda)$.
\end{enumerate}
 Moreover, non of these conditions is necessary.
\end{Theo}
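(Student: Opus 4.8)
The plan is to view $\mathcal{D}_\infty(\lambda)$ through the isometric embedding $D \mapsto f$ into the Banach space $H_\infty([\re>0])$ of bounded holomorphic functions on the right half plane, where $f$ is the bounded holomorphic function represented by $D$ on $[\re>0]$, so that $\|D\|_\infty=\|f\|_\infty$. As $H_\infty([\re>0])$ is complete, completeness of $\mathcal{D}_\infty(\lambda)$ is equivalent to the image being closed. In all three cases one has $\mathcal{D}_\infty^{\mathrm{ext}}(\lambda)=\mathcal{D}_\infty(\lambda)$ --- for (1) and (2) by Theorem~\ref{conditions2}, and for (3) by hypothesis --- so it suffices to prove: whenever $f_k\to f$ uniformly on $[\re>0]$ with each $f_k$ the sum of some $D_k\in\mathcal{D}_\infty(\lambda)$, the limit $f$ is the bounded holomorphic extension of a somewhere convergent $\lambda$-Dirichlet series. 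To build that series I would recover its coefficients by a mean value argument: for any $\sigma$ for which the relevant series converges absolutely (or, under Bohr's theorem, uniformly) on $[\re>\sigma]$, the function $t\mapsto f(\sigma+it)$ is uniformly almost periodic with frequencies in $\{-\lambda_n\}$, and
$$ a_n\,e^{-\lambda_n\sigma}=\lim_{T\to\infty}\frac{1}{2T}\int_{-T}^{T}f(\sigma+it)\,e^{i\lambda_n t}\,dt,\qquad\text{so}\qquad |a_n|\,e^{-\lambda_n\sigma}\le\|f\|_\infty. $$
In particular each coefficient map $D\mapsto a_n(D)$ is a bounded functional, and from a Cauchy sequence $(D_k)$ one extracts coefficientwise limits $a_n^{(k)}\to a_n$; write $D=\sum a_n e^{-\lambda_n s}$ for the resulting formal series.

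Cases (1) and (3) I would treat together, since $L(\lambda)=0$ forces $\mathcal{D}_\infty^{\mathrm{ext}}=\mathcal{D}_\infty$ (Theorem~\ref{conditions2}) and $L(\lambda)=0<\infty$, making (1) the instance $L(\lambda)=0$ of (3). Here the key point is that $D_k\in\mathcal{D}_\infty(\lambda)$ converges already on all of $[\re>0]$, whence $\sigma_a(D_k)\le \sigma_c(D_k)+L(\lambda)\le L(\lambda)$ and the mean value bound applies for every $\sigma>L(\lambda)$; letting $\sigma\downarrow L(\lambda)$ gives $|a_n^{(k)}|\le e^{\lambda_n L(\lambda)}\|f_k\|_\infty$. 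Since $(\|f_k\|_\infty)$ is bounded, say by $M$, the limit coefficients obey $|a_n|\le e^{\lambda_n L(\lambda)}M$, and therefore $\sum|a_n|e^{-\lambda_n\sigma}\le M\sum e^{-\lambda_n(\sigma-L(\lambda))}<\infty$ for $\sigma>2L(\lambda)$, using $L(\lambda)=\sigma_c(\sum e^{-\lambda_n s})$. Thus $D$ converges absolutely on $[\re>2L(\lambda)]$ --- in particular somewhere --- and the same estimate shows $f_k\to\sum a_n e^{-\lambda_n s}$ pointwise there; comparing with $f_k\to f$ identifies $f$ as the extension of $D$. Hence $D\in\mathcal{D}_\infty^{\mathrm{ext}}(\lambda)=\mathcal{D}_\infty(\lambda)$ and $\|D_k-D\|_\infty=\|f_k-f\|_\infty\to0$.

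Case (2) is where the argument is genuinely harder, because $\mathbb{Q}$-linear independence gives no control on $L(\lambda)$, which may be infinite; bounded coefficients then no longer force convergence anywhere. What survives is Bohr's theorem (Theorem~\ref{conditions2}), so each $D_k$ converges uniformly on every $[\re>\varepsilon]$ and the mean value bound yields the sharp estimate $|a_n^{(k)}|\le\|f_k\|_\infty$ (let $\sigma\downarrow0$). My plan is to pass to the Bohr compactification: since $\lambda$ is $\mathbb{Q}$-linearly independent, Kronecker's theorem makes $t\mapsto(e^{-i\lambda_n t})_n$ a dense homomorphism $\R\to\T^{\N}$, and the uniformly almost periodic functions $f_k(\sigma+i\,\cdot\,)$ lift isometrically to functions $F_k$ on $\T^{\N}$ with Fourier support in the coordinates and $\|F_k\|_\infty=\|f_k\|_\infty$. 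The $F_k$ then form a Cauchy sequence in the closed Hardy subspace of $L_\infty(\T^{\N})$ and converge to some $F$ with $\widehat F(e_n)=a_n$. The main obstacle --- and the step I expect to require the most care --- is the reverse passage: showing that this boundary object $F$ unlifts to a $\lambda$-Dirichlet series which genuinely converges on some half plane and whose bounded extension is $f$. This representation step is exactly the independence-driven special case of the identification pursued later in Theorem~\ref{mainresultinfty}, and it is here that the ideas of Bohr and Helson alluded to in the introduction enter.

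Finally, to see that none of the three conditions is necessary I would exhibit complete spaces violating each in turn. The ordinary frequency $\lambda=(\log n)$ gives $\mathcal{D}_\infty(\lambda)=\mathcal{H}_\infty$, which is complete although $L(\lambda)=1\neq0$ and $(\log n)$ is $\mathbb{Q}$-linearly dependent; this defeats the necessity of (1) and of (2). For (3), I would take a $\mathbb{Q}$-linearly independent frequency growing slowly enough that $L(\lambda)=\limsup_n(\log n)/\lambda_n=\infty$: by case (2) the space $\mathcal{D}_\infty(\lambda)$ is complete, yet condition (3) fails since $L(\lambda)=\infty$.
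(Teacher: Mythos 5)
First, a remark on the comparison itself: the paper offers no proof of Theorem~\ref{conditions} --- it is imported wholesale from \cite[Remark 4.8 and Theorem 5.1]{DefantSchoolmann2} --- so your attempt has to be judged on its own terms. Your treatment of cases (1) and (3) is correct and essentially complete: reducing completeness to closedness of the isometric image in $H_\infty([\re>0])$, recovering coefficients via the mean-value formula on a half-plane of absolute convergence, using $\sigma_a(D_k)\le\sigma_c(D_k)+L(\lambda)\le L(\lambda)$ to get $|a_n^{(k)}|\le e^{\lambda_n L(\lambda)}\|f_k\|_\infty$ (and the same bound for differences $D_k-D_j$), and then concluding absolute convergence of the limit series on $[\re>2L(\lambda)]$ so that the hypothesis $\mathcal{D}^{ext}_\infty(\lambda)=\mathcal{D}_\infty(\lambda)$ can be invoked. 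The non-necessity examples are also fine under the natural reading that each of the three conditions separately fails to be necessary.

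The genuine gap is case (2). When $\lambda$ is $\mathbb{Q}$-linearly independent, $L(\lambda)$ may be infinite, the bound $|a_n|\le\|f\|_\infty$ no longer forces the limit series to converge anywhere, and you explicitly leave open the decisive step: ``unlifting'' the boundary object $F$ on $\T^{\N}$ to a somewhere convergent $\lambda$-Dirichlet series whose bounded extension is $f$. As you note, that step is essentially the hard direction of Theorem~\ref{mainresultinfty}; it is not available here, and its proof in the paper rests on machinery (Theorem~\ref{Dstar}, Helson-type convergence) that one would not want completeness to depend on. So as written the plan is unfinished, and arguably circular. The repair is far more elementary and bypasses the lift entirely: by Kronecker's theorem, $\mathbb{Q}$-linear independence gives $\sup_{t\in\R}\big|\sum_{n\le N}a_n e^{-i\lambda_n t}\big|=\sum_{n\le N}|a_n|$ for every Dirichlet polynomial; combined with Bohr's theorem (uniform convergence of each $D\in\mathcal{D}_\infty(\lambda)$ on $[\re>\varepsilon]$) and letting $\varepsilon\downarrow 0$, this yields $\|D\|_\infty=\sum_n|a_n(D)|$, i.e.\ $\mathcal{D}_\infty(\lambda)$ is isometrically $\ell_1(\N)$ --- a fact the paper itself records in Section~\ref{B-prob}. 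Completeness is then immediate: a Cauchy sequence has coefficients Cauchy in $\ell_1$, and the limit series converges absolutely and uniformly on $[\re\ge 0]$ with the correct bound. I recommend you replace your Bohr-compactification sketch for case (2) by this argument.
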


 For later use we  finally
  recall that Dirichlet series $D \in \mathcal{D}^{ext}_{\infty}(\lambda)$ are approximated by their so-called typical means (see again \cite[\S 3]{DefantSchoolmann2}).
\begin{Prop} \label{typicalmeans}
Let $D=\sum a_{n} e^{-\lambda_{n}s} \in \mathcal{D}^{ext}_{\infty}(\lambda)$ with extension $f$. Then
the sequence of polynomials
$$R_{x}(D)(s):=\sum_{\lambda_{n}<x} a_{n} \left(1-\frac{\lambda_{n}}{x}\right)
e^{-\lambda_{n}s}\,,\,\, x>0$$
\noindent
converges uniformly to $f$ on $[Re>\varepsilon]$ as $x\to \infty$ for all $\varepsilon>0$.
\end{Prop}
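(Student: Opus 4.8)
The plan is to represent the typical mean $R_{x}(D)$ by a Perron--Mellin type contour integral of the bounded extension $f$, and then to move the line of integration across a double pole at the origin, where the residue reproduces precisely $f(s)$. Concretely, I would aim at the identity
\[
x\,R_{x}(D)(s)=\frac{1}{2\pi i}\int_{\re w = c} f(s+w)\,\frac{e^{xw}}{w^{2}}\,dw,
\]
valid for every $s$ with $\re s>0$ and every $c>0$; here the integral converges because $f$ is bounded on $[\re>0]$ and $\int \lvert w\rvert^{-2}\,\lvert dw\rvert<\infty$ on a vertical line off the origin. The whole argument then rests on this formula together with the elementary scalar identity $\frac{1}{2\pi i}\int_{\re w=c} e^{uw}w^{-2}\,dw=\max(u,0)$ for $c>0$ and $u\neq 0$.

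To obtain the representation I would first rewrite the typical mean as a Ces\`aro mean of its partial sums. Writing $A(u):=\sum_{\lambda_{n}<u}a_{n}e^{-\lambda_{n}s}$, a one-line Abel summation gives
\[
R_{x}(D)(s)=\frac{1}{x}\int_{0}^{x}A(u)\,du,
\]
which uses only the (conditional) convergence of $D$ on $[\re>\sigma_{c}(D)]$. Feeding the zeroth-order Perron formula for $A(u)$ into this integral, interchanging the two integrations (legitimate thanks to the smoothing produced by $\int_0^x$), and using that $\frac{1}{2\pi i}\int_{\re w=c} f(s+w)w^{-2}\,dw=0$ for $c>0$ (shift $c\to+\infty$ and invoke boundedness of $f$), one arrives at the displayed representation. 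Alternatively, for $c$ chosen in the half-plane of absolute convergence one may expand $f(s+w)=\sum a_{n}e^{-\lambda_{n}(s+w)}$ on the contour, interchange sum and integral, apply the scalar identity termwise, and extend to all $c>0$ and all $\re s>0$ by the identity theorem, both sides being holomorphic in $s$.

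With the representation in hand I would fix $0<c'<\varepsilon$ and shift the contour from $\re w=c$ to $\re w=-c'$. On the strip $-c'\le \re w\le c$ the integrand is holomorphic apart from the double pole at $w=0$, and the horizontal pieces vanish as the imaginary part tends to $\pm\infty$ because of the factor $w^{-2}$; since $\operatorname{Res}_{w=0} f(s+w)e^{xw}w^{-2}=f'(s)+x f(s)$, the residue theorem yields
\[
x\,R_{x}(D)(s)=f'(s)+x\,f(s)+\frac{1}{2\pi i}\int_{\re w=-c'} f(s+w)\,\frac{e^{xw}}{w^{2}}\,dw .
\]
Dividing by $x$ gives
\[
R_{x}(D)(s)-f(s)=\frac{f'(s)}{x}+\frac{1}{2\pi i\,x}\int_{\re w=-c'} f(s+w)\,\frac{e^{xw}}{w^{2}}\,dw .
\]
Both remainder terms are then uniformly small on $[\re>\varepsilon]$: a Cauchy estimate on the disc of radius $\re s\ge\varepsilon$ gives $\lvert f'(s)\rvert\le \lVert f\rVert_{\infty}/\varepsilon$, so $\lvert f'(s)/x\rvert\le \lVert f\rVert_{\infty}/(\varepsilon x)$; and since $\re(s+w)\ge\varepsilon-c'>0$ on the new line we have $\lvert f(s+w)\rvert\le\lVert f\rVert_{\infty}$ and $\lvert e^{xw}\rvert=e^{-c'x}$, whence the last integral is bounded by $\lVert f\rVert_{\infty}e^{-c'x}/(2c'x)$. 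Both bounds are independent of $s$ and tend to $0$ as $x\to\infty$, which gives uniform convergence on $[\re>\varepsilon]$.

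The main obstacle is the rigorous justification of the integral representation in full generality, i.e.\ when $D$ converges only conditionally and $L(\lambda)$ may be infinite, so that no half-plane of absolute convergence is available. The cleanest route around this is precisely the Ces\`aro reformulation $R_{x}(D)=\tfrac1x\int_0^x A(u)\,du$: the extra integration regularizes the zeroth-order Perron integral (taming the Gibbs-type behaviour of $A(u)$ at the jumps $u=\lambda_n$) and makes the interchange of integrations and the passage $c\to+\infty$ harmless. Once the representation is secured the contour shift and the two elementary estimates are routine, and the only place where the hypothesis $D\in\mathcal{D}_{\infty}^{ext}(\lambda)$ is used is through the global boundedness of $f$ on $[\re>0]$, which controls $f$ on the shifted line and, via Cauchy's inequality, its derivative.
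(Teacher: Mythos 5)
Your overall strategy --- the representation $x\,R_{x}(D)(s)=\frac{1}{2\pi i}\int_{\re w=c}f(s+w)e^{xw}w^{-2}\,dw$, the shift of the contour across the double pole at $w=0$, the residue $f'(s)+x f(s)$, and the two uniform estimates on $[\re >\varepsilon]$ --- is precisely the classical Riesz typical-means argument that the paper relies on (it does not prove the proposition itself but refers to \cite{DefantSchoolmann2}, which follows Hardy--Riesz), and everything from the representation onwards is correct exactly as you wrote it: the residue computation, the Cauchy bound $|f'(s)|\le \|f\|_{\infty}/\varepsilon$, and the bound $\|f\|_{\infty}e^{-c'x}/(2c'x)$ for the shifted integral are all right, and the horizontal pieces do vanish because of the factor $w^{-2}$.

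The genuine gap is the justification of the representation, and neither of the two routes you sketch closes it in the stated generality. The identity-theorem route needs a half-plane of absolute convergence, hence $L(\lambda)<\infty$; the proposition assumes nothing of the sort, and for a somewhere-convergent $D$ one only has $\sigma_{a}(D)\le \sigma_{c}(D)+L(\lambda)$, with $L(\lambda)=\infty$ possible. The Ces\`aro route takes as input the zeroth-order Perron formula for $A(u)$, but that is itself a conditionally convergent integral whose proof for a merely somewhere-convergent series is at least as delicate as the first-order formula you want, and the interchange of $\int_{0}^{x}du$ with a non-absolutely convergent $dw$-integral is not covered by Fubini, so the difficulty has only been relocated. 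The standard repair proves the first-order formula directly: choose $c$ so large that $\re s+c>\sigma_{c}(D)$; for the partial sums $S_{N}(w)=\sum_{n\le N}a_{n}e^{-\lambda_{n}w}$ the identity $\frac{1}{2\pi i}\int_{\re w=c}S_{N}(s+w)e^{xw}w^{-2}\,dw=\sum_{n\le N,\,\lambda_{n}<x}a_{n}(x-\lambda_{n})e^{-\lambda_{n}s}$ is a finite sum of absolutely convergent integrals handled by your scalar identity; on the line $\re w=c$ one has $S_{N}(s+w)\to f(s+w)$ pointwise and $|S_{N}(s+c+it)|\le A+B|t|$ uniformly in $N$ (the classical growth estimate for partial sums inside the half-plane of conditional convergence), so dominated convergence against $|c+it|^{-2}$ yields the formula for this particular $c$; finally Cauchy's theorem together with the boundedness of $f$ on $[\re>0]$ lets you move the line to any $c>0$. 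With that lemma in place, your contour shift and estimates complete the proof.
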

\section{Fourier analysis setting} \label{fouriersetting}

In this section we  present a new abstract Fourier analysis approach to general Dirichlet series.
 Basically
we restrict ourselves to general Dirichlet series with Dirichlet coefficients which actually are Fourier coefficients of  functions on  certain compact
abelian groups.
This has several  advantages.
One is  that  the class of all general Dirichlet series simply is  too large to obtain a good understanding.
 Assuming that the Dirchlet coefficients are Fourier coefficients gives more structure and allows to use tools from harmonic analysis like  the Hausdorff-Young inequality or Plancherel's theorem (among others).
   A further advantage of our setting is that  Bayart's  $\mathcal{H}_{p}$-theory of ordinary Dirichlet series embeds in a natural way. Whereas the $\mathcal{H}_{p}$-theory of ordinary Dirichlet series is basically Fourier
  analysis on the infinite dimensional torus  $\T^{\infty}$, this group  fails to be the right model
  for general Dirichlet series. In fact, the Bohr compactification $\overline{\R}$ of $\R$ and products of $\widehat{\mathbb{Q}_d}$ (the dual group of the rationals endowed with the discrete topology) turn out to be  suitable substitutes. Finally, fixing some $\lambda$, regarding the different realisations of $\lambda$-Dirichlet series of this type, another feature of our approach is that the $\mathcal{H}_{p}$-theory of general Dirichlet series we intend to present will be independent of the chosen suitable group for $\lambda$.

\subsection{Dirichlet groups}
We start by introducing the type of groups we are interested in.

\begin{Defi} Let $G$ be a compact abelian group and $\beta\colon \R  \to G$ a homomorphism. Then we call the pair $(G, \beta)$ a Dirichlet group if $\beta$ is continuous and has dense range.
\end{Defi}
\noindent

We will frequently use the notion of dual maps of homomorphisms $T \colon G \to H$, where $G$ and $H$ are locally compact abelian groups. The dual map $\widehat{T} \colon \widehat{H} \to \widehat{G}$ is then defined by $\widehat{T}(h):= h \circ T$ (see \cite[\S 1.2.1, p.6]{Rudin62} for the definition of  the dual group $\widehat{G}$). By Pontryagin's theorem (see \cite[\S 1.7.2, Theorem, p. 28]{Rudin62}) we have $\widehat{\widehat{T}}=T$, and $\widehat{T}$ is continuous if $T$ is continuous. Moreover, we need the following fact from the theory of locally compact groups.

\begin{Lemm}\label{dualmapping} Let $T \colon G \to H$ be a continuous homomorphism of locally compact abelian groups. Then $T$ is injective if and only if $\widehat{T}$ has dense range.
\end{Lemm}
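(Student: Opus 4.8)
The plan is to translate both the injectivity of $T$ and the density of the range of $\widehat{T}$ into statements about a single annihilator, and then to invoke the duality between closed subgroups of a locally compact abelian group and closed subgroups of its dual. For a subset $S$ of a locally compact abelian group $\Gamma$ write $S^{\perp} := \{\gamma \in \widehat{\Gamma} : \gamma(x)=1 \text{ for all } x \in S\}$ for its annihilator in $\widehat{\Gamma}$; this is always a closed subgroup, and by Pontryagin's theorem we may identify $\widehat{\widehat{\Gamma}} = \Gamma$, so that annihilators of subsets of $\widehat{\Gamma}$ live again in $\Gamma$.

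First I would compute the annihilator of the range of $\widehat{T}$ inside $\widehat{\widehat{G}} = G$. By definition a point $g \in G$ lies in $\bigl(\widehat{T}(\widehat{H})\bigr)^{\perp}$ if and only if $\bigl(\widehat{T}(h)\bigr)(g) = h(T(g)) = 1$ for every $h \in \widehat{H}$. Since the continuous characters of the locally compact abelian group $H$ separate its points (the canonical map $H \to \widehat{\widehat{H}}$ being injective, again by Pontryagin's theorem), the condition $h(T(g))=1$ for all $h$ is equivalent to $T(g) = e_{H}$. Hence
\[
\bigl(\widehat{T}(\widehat{H})\bigr)^{\perp} = \ker T.
\]

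It then remains to read off the two equivalences. On the one hand $T$ is injective if and only if $\ker T = \{e_{G}\}$, that is, if and only if $\bigl(\widehat{T}(\widehat{H})\bigr)^{\perp} = \{e_{G}\}$. On the other hand, the annihilator of a subset of $\widehat{G}$ coincides with the annihilator of the closed subgroup it generates, so $\bigl(\widehat{T}(\widehat{H})\bigr)^{\perp} = \bigl(\overline{\widehat{T}(\widehat{H})}\bigr)^{\perp}$; and for a closed subgroup $C \subseteq \widehat{G}$ the bipolar relation $\bigl(C^{\perp}\bigr)^{\perp} = C$ gives $C = \widehat{G}$ precisely when $C^{\perp} = \{e_{G}\}$. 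Applying this with $C = \overline{\widehat{T}(\widehat{H})}$ shows that $\widehat{T}$ has dense range if and only if $\bigl(\widehat{T}(\widehat{H})\bigr)^{\perp} = \{e_{G}\}$. Combining the two characterisations yields the claim.

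The only genuinely deep ingredient, and hence the step I expect to be the main obstacle, is the bipolar relation $(C^{\perp})^{\perp}=C$ for closed subgroups $C$ of a locally compact abelian group (equivalently, the perfect duality between closed subgroups and quotient groups). This is part of the standard structure theory and can be cited from Rudin, together with the separation of points by characters; both are consequences of the Pontryagin duality already invoked in the paper, so no additional machinery beyond what is quoted around Lemma~\ref{dualmapping} is needed.
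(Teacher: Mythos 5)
Your proof is correct and follows essentially the same route as the paper: both arguments reduce the statement to the identity $\ker T = (\operatorname{Im}\widehat{T})^{\perp}$ and then conclude via the bipolar relation $(U^{\perp})^{\perp} = \overline{U}$ together with the fact that characters separate points. The only cosmetic difference is that you verify this identity by a direct computation in $\widehat{\widehat{G}}=G$, whereas the paper obtains it by dualizing the definitional identity $\ker\widehat{T}=(\operatorname{Im}T)^{\perp}$ via $\widehat{\widehat{T}}=T$.
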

For the sake of completeness we give a proof. Given a locally compact  abelian group $G$  with unit $e$ and  a subgroup $U$ of $G$,  we call $U^{\perp}:=\{ \gamma \in \widehat{G} \mid  \gamma(u)=e \text{ for all } u \in U \}$ the orthogonal group of $U$. Then ${(U^{\perp})}^{\perp}=\overline{U}$, where the closure is taken in $G$ (see  \cite[\S1.5.2]{QQ}).
\begin{proof}[Proof of Lemma \ref{dualmapping}]
By definition $\ker \widehat{T}=(\text{Im} T)^{\perp}$, and consequently $(\ker \widehat{T})^{\perp}=\overline{\text{Im} T}$. Using $\widehat{\widehat{T}}=T$ gives $\ker T=(\text{Im} \widehat{T})^{\perp}$, and hence $(\ker T)^{\perp}=\overline{\text{Im} \widehat{T}}$ which proves the claim.
\end{proof}
\noindent
With this, given a Dirichlet group $(G, \beta)$, the dual map of $\beta$ given by
\begin{equation*}
\widehat{\beta} \colon \widehat{G} \to \widehat{\R}, ~~\gamma \mapsto \gamma \circ \beta
\end{equation*}
is a monomorphism.
Using $\widehat{\R}=\R$ (as topological groups) implies that for  every $\gamma$ there is a unique $x \in \R$ such that $\gamma \circ \beta=e^{-ix\cdot}$. Identifying $\widehat{\beta}(\gamma)=x$ we obtain an embedding
$$\widehat{\beta} \colon \widehat{G} \hookrightarrow \R.$$

\medskip

Another property of Dirichlet groups we are going to use is  that they are connected (see the proof of Theorem~\ref{schauderbasisinHp}); this is an immediate consequence of the fact that, given a connected set,
its   image under a continuous map  as well as its closure are again connected.

\medskip

Observe that every character $e^{-ix\cdot} \in \widehat{\beta}(\widehat{G})$, allows a unique `extension' to $G$, say $h_{x}$,  such that $h_{x} \circ \beta=e^{-ix\cdot}$.
Conversely, for every $\gamma \in \widehat{G}$ there is some $x \in \mathbb{R}$ such that $e^{-ix\cdot} = \gamma\circ \beta$,  which proves the following
\begin{Prop}\label{hx} Let $(G, \beta)$ be a Dirichlet group.
The characters $e^{-ix\cdot} \colon \R \to \T$ on $\R$, where $x\in \widehat{\beta}(\widehat{G})$, are precisely those which allow a continuous `extension' $h_{x}$ to $G$. In particular
\begin{equation*} \label{dualmapping3}
\widehat{G}=\{h_{x} \mid x \in \widehat{\beta}(\widehat{G}) \}.
\end{equation*}
\end{Prop}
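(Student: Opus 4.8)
The plan is to read the statement as a direct unwinding of the definition of the dual map $\widehat\beta$, with the single structural hypothesis of the Dirichlet group — density of the range of $\beta$ — doing all the real work. First I would fix the meaning of the word `extension': a continuous character $h_x \in \widehat G$ extends $e^{-ix\cdot}$ along $\beta$ precisely when the defining triangle commutes, that is $h_x \circ \beta = e^{-ix\cdot}$. With this convention the proposition becomes the assertion that, under the identification $\widehat\R = \R$, the set of $x$ admitting such an $h_x$ is exactly $\widehat\beta(\widehat G)$, which I would prove as two opposite readings of one equation.

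For the forward direction, suppose $x \in \widehat\beta(\widehat G)$. By definition of the image there is some $\gamma \in \widehat G$ with $\widehat\beta(\gamma) = x$, and unwinding the identification $\widehat\R = \R$ this means precisely $\gamma \circ \beta = e^{-ix\cdot}$. Hence $h_x := \gamma$ is the desired continuous extension. For the converse, suppose $e^{-ix\cdot}$ admits a continuous extension, i.e.\ some $h_x \in \widehat G$ with $h_x \circ \beta = e^{-ix\cdot}$. Then by the very definition of the dual map $\widehat\beta(h_x) = h_x \circ \beta = e^{-ix\cdot}$, which under the identification is exactly $x$, so $x \in \widehat\beta(\widehat G)$. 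These two steps already give the equivalence claimed in the first sentence.

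To justify the parenthetical uniqueness of $h_x$, I would invoke density: if $h_x$ and $h_x'$ both extend $e^{-ix\cdot}$, then the continuous character $h_x \overline{h_x'}$ of $G$ is trivial on $\beta(\R)$, hence trivial on $G = \overline{\beta(\R)}$, so $h_x = h_x'$. The same density argument re-proves that $\widehat\beta$ is injective (already recorded above via Lemma~\ref{dualmapping}), so that $x \mapsto h_x$ is a well-defined bijection of $\widehat\beta(\widehat G)$ onto $\widehat G$. The displayed `in particular' equality $\widehat G = \{ h_x \mid x \in \widehat\beta(\widehat G)\}$ then drops out: the converse direction shows every $\gamma \in \widehat G$ is the unique extension $h_x$ for $x = \widehat\beta(\gamma) \in \widehat\beta(\widehat G)$.

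I do not expect a genuine obstacle here; the content is essentially tautological once the notation is in place. The only point demanding care — and the closest thing to a `hard part' — is the bookkeeping with the identification $\widehat\R \cong \R$, and the recognition that it is density of $\beta(\R)$, rather than compactness of $G$, that simultaneously delivers uniqueness of the extension and injectivity of $\widehat\beta$. In short, the two phrasings ``$x$ lies in the image of $\widehat\beta$'' and ``$e^{-ix\cdot}$ extends continuously to $G$'' are literally the same equation $\gamma \circ \beta = e^{-ix\cdot}$ read in opposite directions, and the proof amounts to saying exactly this.
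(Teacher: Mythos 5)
Your argument is correct and is essentially the paper's own: the text immediately preceding Proposition~\ref{hx} proves it by exactly this unwinding of the equation $\gamma \circ \beta = e^{-ix\cdot}$ in both directions, with uniqueness of $h_x$ coming from the injectivity of $\widehat{\beta}$ (equivalently, the density of $\beta(\R)$ recorded in Lemma~\ref{dualmapping}). No differences worth noting.
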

More precisely, we should denote the characters of $G$ by $h_{x}^{(G,\beta)}$ instead of $h_{x}$, but in most situations this in fact will not be necessary.\\

Motivated by the equality $G=\widehat{\widehat{G}}$ (Pontryagin's duality theorem) for notational convenience we for all $\omega \in G$  write
$$\omega(x):=h_{x}(\omega), ~ x \in \widehat{\beta}(\widehat{G}).$$

Let us consider some examples of Dirichlet groups.
As explained, if $(G, \beta)$ is a Dirichlet group, then $\widehat{G} \subset \R$ (via $\widehat{\beta}$).
In fact,  the reverse of this implication holds, which gives a handy criterion to establish a list of main examples of  Dirichlet groups. The proof of the following proposition  is an  immediate consequence of Lemma \ref{dualmapping}; recall that the dual group $\widehat{G}$  carries the discrete topology, since $G$ is compact (see \cite[\S 1.2.5, Theorem, p. 9]{Rudin62}).

\begin{Prop} \label{aya} Let $G$ be a compact abelian group. If there is an injective homomorphism $T \colon \widehat{G}\hookrightarrow \R$, then $(G, \widehat{T})$ is a Dirichlet group.
\end{Prop}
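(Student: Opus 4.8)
The plan is to read off both defining properties of a Dirichlet group directly from the behaviour of dual maps, with Lemma~\ref{dualmapping} carrying the essential content. First I would observe that, since $G$ is compact, its dual $\widehat{G}$ carries the discrete topology, so the given homomorphism $T \colon \widehat{G} \to \R$ is automatically continuous. By the general fact recalled earlier (the dual of a continuous homomorphism is again continuous), the dual map $\widehat{T}$ is then a continuous homomorphism as well. Invoking the identifications $\widehat{\R} = \R$ and, via Pontryagin's duality theorem, $\widehat{\widehat{G}} = G$, this dual map has the form $\widehat{T} \colon \R \to G$, and hence is a legitimate candidate for the homomorphism $\beta$ in the definition of a Dirichlet group.

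It then remains only to check that $\widehat{T}$ has dense range in $G$, and here I would apply Lemma~\ref{dualmapping} to $T$ itself. Since $\widehat{G}$ (discrete) and $\R$ are both locally compact abelian groups and $T$ is a continuous homomorphism between them, the lemma asserts that $T$ is injective if and only if $\widehat{T}$ has dense range. By hypothesis $T$ is injective, whence $\widehat{T}$ has dense range. Combining this density with the continuity established in the first step shows that $(G, \widehat{T})$ is a Dirichlet group, which completes the argument.

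There is no genuine obstacle here; the only points demanding care are bookkeeping ones. One must keep straight that the lemma should be invoked for $T$ — whose dual is precisely $\widehat{T}$ — and in the direction \emph{injectivity of the map forces density of range of its dual}, rather than attempting to apply it to $\widehat{T}$ directly, which would instead relate injectivity of $\widehat{T}$ to density of the range of $T$ and thus answer the wrong question. One should also confirm that the codomain of $\widehat{T}$ is genuinely $G$ and not merely its bidual, which is exactly what Pontryagin's theorem guarantees; this is precisely why the statement is naturally read as a converse to the embedding $\widehat{G} \hookrightarrow \R$ obtained just before the proposition.
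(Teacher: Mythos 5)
Your argument is correct and is exactly the paper's proof: the paper likewise notes that $\widehat{G}$ is discrete (so $T$ is automatically continuous) and then reads off density of the range of $\widehat{T}\colon\R\to G$ from Lemma~\ref{dualmapping} applied to the injective map $T$. Your cautionary remarks about applying the lemma to $T$ rather than $\widehat{T}$, and about Pontryagin duality identifying the codomain with $G$, are exactly the right bookkeeping.
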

We write $G_d$ for a group $G$ with the discrete topology $d$. In the following we give four central examples --
and start with the `mother' of all of them.

\begin{Exam}\label{examples}
 Let $U$  be a subgroup of $\R$. Then the topological group  $\widehat{U_d}$ together with the
 mapping
$$ \beta_{\widehat{U_d}}\colon \R \to  \widehat{U_d}, ~~ t \mapsto \left[u \mapsto e^{-itu}\right] $$
 forms a  Dirichlet group. In particular, for $U = \mathbb{Z}$ and identifying $\T =\widehat{\mathbb{Z}_d}$, we
 obtain the Dirichlet group $(\T, \beta_{\T})$, where
\[
\beta_{\T}: \R \rightarrow \T, \, t \mapsto e^{-it}.
\]
\end{Exam}

\begin{Exam}\label{examples1}
The compact abelian group  $\overline{\R}:=\widehat{(\R,d)}$ is  the so-called   Bohr compactification
of $\R$ which forms a Dirichlet group with the embedding
\begin{equation*}
\beta_{\overline{\R}}\colon \R \hookrightarrow \overline{\R},\,\, x \mapsto \left[ t \mapsto e^{-ixt}\right]\,.
\end{equation*}
\end{Exam}

In Proposition \ref{biggestDirichletgroup} we will see that $(\overline{\R},\beta_{\overline{\R}})$ is in some sense the smallest Dirichlet group.

\begin{Exam}\label{examples2}
Let $B:=(b_{1}, b_{2}, \ldots)$ be a $\Z$-linearly independent (equivalently, $\mathbb{Q}$-linearly independent) sequence of real numbers of length $N \in \mathbb{N} \cup \{\infty\}$ (choose for instance $b_{n}:=\log(p_{n})$ where $p_{n}$ is the $n$th prime number). Then $\bigoplus_{n=1}^{N} \Z$
in view of the injective homomorphism
$$\bigoplus_{n=1}^{N} \Z \hookrightarrow \R, ~~\alpha \mapsto \sum \alpha_{j}b_{j}$$
is a subgroup of $\R$, and hence the compact abelian group
$$\T^{N} = \prod_{n=1}^{N} \widehat{\Z_d} = \widehat{\bigoplus_{n=1}^{N} \Z_d}$$
together with
$$\beta_{\T^{N}} \colon \R \to \T^{N},~~ t \mapsto  (e^{-it b_{j}})_{j=1}^N$$
forms a Dirichlet group.
\end{Exam}

The density of $\beta_{\T^{N}}$ is nothing else than Kronecker's theorem, which is a key argument in the theory of ordinary Dirichlet series (choosing the $\mathbb{Q}$-linearly independent sequence $B:=(\log p_{n})$, where $p_{n}$ is the $n$th prime number).

\begin{Exam}\label{examples3} \text{}
Let $B:=(b_{1}, b_{2}, \ldots)$ be a $\mathbb{Q}$-linearly independent sequence of real numbers of length $N \in \mathbb{N} \cup \{\infty\}$.
Then
\begin{equation} \label{themap}
T_B: \bigoplus_{n=1}^{N} \mathbb{Q} \hookrightarrow \R, ~~\alpha \mapsto \sum \alpha_{j}b_{j}
\end{equation}
is an  injective homomorphism, and its dual map
$$\widehat{T_{B}} \colon \R \to \prod_{n=1}^{N} \widehat{\mathbb{Q}_d}, ~~ t \mapsto \left[ (q_{j})_{j} \mapsto e^{-it\sum q_{j}b_{j}}\right]$$
 has dense range. In short
 $$\left(\widehat{\mathbb{Q}_d}^{N}, \widehat{T_{B}}\right):=\left(\prod_{n=1}^{N} \widehat{\mathbb{Q}_d},\widehat{T_{B}}\right)$$ is a Dirichlet group.
\end{Exam}
 To see the relevance of these examples within the study of general Dirichlet series we refer to  Section \ref{severalvariables}.
In view of the preceding examples we explain in which sense the Bohr compactification $\overline{\R}$ is the 'smallest' Dirichlet group. Later this in particular allows to link $L_{p}$-spaces on different Dirichlet groups.
\begin{Prop}\label{biggestDirichletgroup} For every Dirichlet group $(G, \beta)$ there is a unique
continuous homomorphism
\[
\pi_G: \overline{\R} \rightarrow G
\]
with dense range and such that
 $\beta = \pi_G \circ \beta_{\overline{\R} } $. Moreover, $\overline{\R}$ is the unique Dirichlet group with this property up to isomorphisms of groups.
\end{Prop}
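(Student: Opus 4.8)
The plan is to construct $\pi_G$ by Pontryagin duality, transporting the embedding $\widehat{\beta}\colon \widehat{G}\hookrightarrow \R$ to the dual side. First I would record the two identifications that drive everything. Since $\overline{\R}=\widehat{(\R,d)}$, Pontryagin's theorem ($\widehat{\widehat{T}}=T$) gives $\widehat{\overline{\R}}=\R_d$; and a short computation straight from the definition of $\beta_{\overline{\R}}$ shows that the dual map $\widehat{\beta_{\overline{\R}}}\colon \R_d=\widehat{\overline{\R}}\to \widehat{\R}=\R$ is the canonical inclusion $y\mapsto y$, i.e.\ the identity on underlying sets that merely forgets discreteness. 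These two facts are the only non-formal ingredients.

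Given a Dirichlet group $(G,\beta)$, the monomorphism $\widehat{\beta}\colon \widehat{G}\hookrightarrow \R$ from the discussion following Lemma~\ref{dualmapping} in fact takes values in $\R_d=\widehat{\overline{\R}}$, because $\widehat{G}$ is discrete, so its corestriction $\widehat{G}\to \R_d$ is automatically continuous and still injective. Regarding $\widehat{\beta}$ as this map, I would define
$$\pi_G:=\widehat{\widehat{\beta}}\colon \widehat{\widehat{\overline{\R}}}=\overline{\R}\longrightarrow \widehat{\widehat{G}}=G.$$
Being the dual of a continuous homomorphism, $\pi_G$ is continuous; and since $\widehat{\beta}$ is injective, Lemma~\ref{dualmapping} shows that $\pi_G$ has dense range.

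For the factorization $\beta=\pi_G\circ \beta_{\overline{\R}}$, rather than chasing elements I would dualize: by injectivity of $T\mapsto \widehat{T}$ it suffices to prove $\widehat{\beta}=\widehat{\beta_{\overline{\R}}}\circ \widehat{\pi_G}$. Here $\widehat{\pi_G}=\widehat{\widehat{\widehat{\beta}}}=\widehat{\beta}$ (again by $\widehat{\widehat{T}}=T$), so the right-hand side is $\widehat{\beta_{\overline{\R}}}\circ \widehat{\beta}\colon \widehat{G}\to\R_d\to\R$; since $\widehat{\beta_{\overline{\R}}}$ is the inclusion $\R_d\hookrightarrow \R$, this composite is exactly $\widehat{\beta}\colon \widehat{G}\to\R$, as required. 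This settles existence. Uniqueness of $\pi_G$ is then immediate: two continuous homomorphisms $\overline{\R}\to G$ satisfying the factorization agree on $\beta_{\overline{\R}}(\R)$, which is dense because $(\overline{\R},\beta_{\overline{\R}})$ is a Dirichlet group (Example~\ref{examples1}), hence they agree on all of $\overline{\R}$ as $G$ is Hausdorff.

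Finally, the uniqueness of $\overline{\R}$ up to isomorphism is a formal consequence of the universal property just established. If a Dirichlet group $(H,\alpha)$ enjoys the same property, I would apply the property of $\overline{\R}$ to $(H,\alpha)$ to obtain $\pi_H\colon \overline{\R}\to H$ with $\alpha=\pi_H\circ\beta_{\overline{\R}}$, and the property of $H$ to $(\overline{\R},\beta_{\overline{\R}})$ to obtain $\rho\colon H\to \overline{\R}$ with $\beta_{\overline{\R}}=\rho\circ\alpha$. Then $\rho\circ\pi_H$ and $\Id_{\overline{\R}}$ are both continuous dense-range homomorphisms $\overline{\R}\to\overline{\R}$ satisfying $(\,\cdot\,)\circ\beta_{\overline{\R}}=\beta_{\overline{\R}}$, so the uniqueness clause forces $\rho\circ\pi_H=\Id_{\overline{\R}}$, and symmetrically $\pi_H\circ\rho=\Id_H$; thus $\pi_H$ is a topological group isomorphism intertwining $\beta_{\overline{\R}}$ and $\alpha$. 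I expect the only (and still modest) obstacle to be bookkeeping: keeping the identifications $\widehat{\overline{\R}}=\R_d$ and $\widehat{\pi_G}=\widehat{\beta}$ straight, and verifying that $\widehat{\beta_{\overline{\R}}}$ is the canonical inclusion $\R_d\hookrightarrow\R$; once these are pinned down, the remainder is purely formal duality.
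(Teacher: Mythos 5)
Your proposal is correct and follows essentially the same route as the paper: you define $\pi_G$ as the Pontryagin dual of the corestriction of $\widehat{\beta}$ to $\R_d=\widehat{\overline{\R}}$, obtain dense range from Lemma~\ref{dualmapping}, and prove uniqueness of $\overline{\R}$ by the same two-sided factorization argument using density of the ranges of the $\beta$'s. The only difference is that you spell out the verification of $\beta=\pi_G\circ\beta_{\overline{\R}}$ (by dualizing and identifying $\widehat{\beta_{\overline{\R}}}$ with the inclusion $\R_d\hookrightarrow\R$) and the uniqueness of $\pi_G$, both of which the paper leaves implicit.
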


\begin{proof}
We define $\pi_{G}$ to be the dual map of  $\iota\colon \widehat{G} \to (\R,d),~ \iota(\gamma):=\widehat{\beta}(\gamma)$
(identifying $\widehat{(\R, |\cdot|)} = (\R,d)$). Indeed, $\iota$ is injective, and consequently
$\pi_{G}:=\widehat{\iota}\colon \overline{\R} \to G$ has dense range by Lemma \ref{dualmapping} (and Pontryagin's duality theorem).
To  see the second statement, assume that $(H, \beta_H)$ is another Dirichlet group with the desired property. Then there are  two continuous
homomorphisms  $\pi_{\overline{\R}}\colon H \to \overline{\R}$ and $\pi_{H}\colon \overline{\R}\to H$ with dense range and such that $\beta_{\overline{\R}}=\pi_{\overline{\R}} \circ \beta_{H}$ and $\beta_{H}=\pi_{H} \circ \beta_{\overline{\R}}$. This implies
$\beta_{\overline{\R}}=\pi_{\overline{\R}}\circ \pi_{H} \circ \beta_{\overline{\R}}$ and $\beta_{H}=\pi_{H}\circ \pi_{\overline{\R}} \circ \beta_{H}$.
Since  $\beta_{H}$ and $\beta_{\overline{\R}}$  both have dense range, we get $ \pi_{\overline{\R}}\circ \pi_{H} =id_{\overline{\R}}$ and $\pi_{H}\circ \pi_{\overline{\R}}=id_{H}$, hence $H=\overline{\R}$.
\end{proof}

We close this section  introducing  several devices which transport tools from Fourier analysis on $\R$ to Fourier analysis on Dirichlet groups.
The first two  show how to  'restrict' functions on $G$ to $\R$, and we here  distinguish between  continuous functions  and integrable functions.

\begin{Prop} \label{basic}
Let $(G, \beta)$ be a Dirichlet group, and $f \in C(G)$. Then
\[
\int_G f(\omega) d\omega = \lim_{T \to \infty} \frac{1}{2T} \int_{-T}^T f(\beta(t)) dt\,.
\]
In particular, for every  polynomial
$\sum_{n=1}^N a_n h_{x_n}$ with $x_1, \ldots, x_N \in \hat{\beta}(\widehat{G})$ and $1\le p<\infty$
\begin{equation*}
 \int_{G} \bigg|\sum^{N}_{n=1} a_{n} h_{x_n}(\omega)\bigg|^{p} dm(\omega)=
\lim_{T \to \infty} \frac{1}{2T} \int_{-T}^{T}\bigg|\sum^{N}_{n=1} a_{n}e^{-ix_nt} \bigg|^{p} dt.
\end{equation*}
\end{Prop}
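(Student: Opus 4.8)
The plan is to verify the first (main) identity on the characters of $G$, extend it by linearity to trigonometric polynomials, and then pass to arbitrary $f \in C(G)$ by a density argument; the second (``in particular'') statement will then follow by applying the first identity to the continuous function $\big|\sum_{n=1}^N a_n h_{x_n}\big|^p$.

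First I would check the identity for a single character $h_x$, where $x \in \widehat{\beta}(\widehat{G})$. Since the Haar measure on the compact group $G$ is normalized, the orthogonality relations for characters give $\int_G h_x(\omega)\,d\omega = 1$ when $h_x$ is trivial and $0$ otherwise; because $\beta$ has dense range and $h_x \circ \beta = e^{-ix\cdot}$, the character $h_x$ is trivial exactly when $x = 0$. On the other side, $h_x(\beta(t)) = e^{-ixt}$, and a direct computation yields $\frac{1}{2T}\int_{-T}^T e^{-ixt}\,dt = 1$ for $x = 0$ and $\frac{1}{2T}\int_{-T}^T e^{-ixt}\,dt = \frac{\sin(xT)}{xT} \to 0$ for $x \neq 0$ as $T \to \infty$. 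Hence the two sides agree for every character, and by linearity of both sides they agree for every trigonometric polynomial $P = \sum_k c_k h_{x_k}$.

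Next I would upgrade to general $f \in C(G)$ by a standard $3\varepsilon$-approximation. The set of trigonometric polynomials is a subalgebra of $C(G)$ that contains the constants (the trivial character $h_0 = 1$), is closed under conjugation (since $\overline{h_x} = h_{-x}$ with $-x \in \widehat{\beta}(\widehat{G})$, as $\widehat{G}$ is a group), and separates the points of $G$ (the characters of a compact abelian group separate points, by Pontryagin duality). By the Stone--Weierstrass theorem it is therefore dense in $(C(G), \|\cdot\|_\infty)$. Both functionals $f \mapsto \int_G f\,d\omega$ and $A_T(f) := \frac{1}{2T}\int_{-T}^T f(\beta(t))\,dt$ are bounded by $\|f\|_\infty$, uniformly in $T$ (the first because the Haar measure is a probability measure, the second trivially). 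Given $\varepsilon > 0$ I would pick a trigonometric polynomial $P$ with $\|f - P\|_\infty < \varepsilon$ and split $\int_G f\,d\omega - A_T(f)$ into the three differences coming from $f - P$, from $P$, and from $P - f$; the outer two are each $< \varepsilon$ uniformly in $T$, while the middle one $\int_G P\,d\omega - A_T(P) \to 0$ by the polynomial case. This gives $\limsup_{T\to\infty}\big|\int_G f\,d\omega - A_T(f)\big| \le 2\varepsilon$, and letting $\varepsilon \to 0$ proves the first identity.

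Finally, for the ``in particular'' statement I would apply the identity just proved to $f = \big|\sum_{n=1}^N a_n h_{x_n}\big|^p$, which lies in $C(G)$ because each $h_{x_n}$ is continuous and $z \mapsto |z|^p$ is continuous on $\C$. Using $h_{x_n}(\beta(t)) = e^{-ix_n t}$ rewrites $f(\beta(t))$ as $\big|\sum_{n=1}^N a_n e^{-ix_n t}\big|^p$, which is exactly the desired formula. The only genuinely delicate point is the density step: one must confirm that the trigonometric polynomials really do form a point-separating, self-adjoint algebra containing the constants, so that Stone--Weierstrass applies. This is guaranteed by Pontryagin duality for compact abelian groups, and everything else reduces to the elementary Dirichlet-kernel computation and the routine uniform boundedness estimate.
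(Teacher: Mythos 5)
Your proof is correct and follows essentially the same route as the paper's: both arguments reduce the identity, via the uniform boundedness of the averaging functionals and the density of trigonometric polynomials in $C(G)$, to the verification on single characters $h_{x}$, where the case $x=0$ is trivial and the case $x\neq 0$ is the elementary computation $\frac{1}{2T}\int_{-T}^{T}e^{-ixt}\,dt\to 0$. You merely spell out in more detail the Stone--Weierstrass hypotheses and the $3\varepsilon$-splitting that the paper leaves implicit.
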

\begin{proof}
Obviously, the second equality in an immediate consequence of the first one.
On the left side of the first  equality we have  a bounded functional on $C(G)$, and the density of the range of $\beta$ implies that  the set of all functionals $f \to  \frac{1}{2T} \int_{-T}^T f(\beta(t)) dt, \,T >0$ is uniformly bounded on
$C(G)$. Then, by the density of the polynomials in $C(G)$ (see \cite[\S 8.7.3]{Rudin62}), it  suffices to check the desired equality  for all characters $h_x, x \in \widehat{\beta}(\widehat{G})$. For $x = 0$ the conclusion is trivial, and for $x \neq 0$ this follows from the fact that $\lim_{T \to \infty} \frac{1}{2T} \int_{-T}^T h_x(\beta(t)) dt=\lim_{T \to \infty} \frac{1}{2T} \int_{-T}^T e^{-ixt} dt =0$\,.
\end{proof}

Note that  Proposition \ref{basic} is a reformulation of an important fact of the theory of almost periodic functions.
Recall that  a continuous and bounded function $f \colon \R \to \C$ is almost periodic if and only if it has a continuous
  extension $F$ to $\overline{\R}$ such that $f= F \circ \beta_{\overline{\R}}$ (see \cite[\S 1.5, Theorem 1.5.5 and Theorem 1.5.6]{QQ}), and in this case
\begin{equation*}
\lim_{T\to\infty} \frac{1}{2T} \int_{-T}^{T} f(t) dt= \int_{\overline{\R}} F(\omega) dm(\omega).
\end{equation*}

\medskip

To deduce a similar result for integrable functions one should
have in mind  that $\beta(\R)$ may be a null set in $G$, hence  the restriction of
some function  $f \in L_{1}(G)$ to the subgroup $\beta(\R)$ of $G$ in fact may equal  $0$. Indeed, choosing the Dirichlet group $G=\T$ with $\beta(t):=e^{-it}$ we have $\beta(\R)=\T$, which is obviously not a null set in $\T$.
But, if $G=(\overline{\R}, \beta_{\overline{\R}})$ (Example \ref{examples1}), then $\beta_{\overline{\R}}(\mathbb{R})$ is a null set in $\overline{\R}$ (see \cite[Theorem 4.3]{Hewitt} or \cite{Varopoulos}).

\begin{Lemm} \label{restrictiontoreals}
Let $(G, \beta)$ be a Dirichlet group and $f \colon G \to \C$ be measurable. Then $f_{\omega}:=f(\omega \beta(\cdot)) \colon \R \to \C$ is measurable for almost all $\omega \in G$. If, additionally,
$f \in L_{\infty}(G)$, then
  $f_{\omega} \in L_{\infty}(\R)$ with $\|f_{\omega}\|_{\infty}\le \|f\|_{\infty}$ for almost all $\omega \in G$.
 Moreover, for every  $f \in L_{1}(G)$ and  $g \in L_{1}(\R)$ the convolution
$$g*f_{\omega}(t):=\int_{\R} f(\omega\beta(t-y)) g(y) dy$$
is almost everywhere defined on $\R$ and measurable.
\end{Lemm}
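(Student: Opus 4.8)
The plan is to reduce all three assertions to a single measurability statement about the continuous \emph{action} map
$$\Phi\colon G\times \R \to G\,,\qquad (\omega,t)\mapsto \omega\beta(t)\,,$$
combined with the translation invariance of the Haar measure $m$ on $G$. Write $\mu:=m\otimes\mathrm{Leb}$ for the product of $m$ with Lebesgue measure on $\R$. The pivotal observation I would record first is that $\Phi$ pulls back $m$-null sets to $\mu$-null sets. Indeed, if $N\subset G$ is Borel with $m(N)=0$, then $\Phi^{-1}(N)$ is Borel and, for each fixed $t$, its slice is the translate $N\beta(-t)$, of Haar measure $m(N)=0$; since $G\times\R$ is $\sigma$-finite, Tonelli's theorem gives $\mu(\Phi^{-1}(N))=\int_\R m(N\beta(-t))\,dt=0$.

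Granting this, I would establish the joint measurability of $F(\omega,t):=f(\omega\beta(t))$ on $G\times\R$. By Lusin's theorem there are $f_k\in C(G)$ with $m(\{f\neq f_k\})<2^{-k}$, so $f_k\to f$ $m$-a.e.\ by Borel--Cantelli. Each $F_k:=f_k\circ\Phi$ is continuous, hence measurable, and $F_k\to F$ off $\Phi^{-1}(\{f_k\not\to f\})$, which is $\mu$-null by the previous step; therefore $F$ is measurable. Fubini's theorem now yields assertion~1: for a.e.\ $\omega$ the section $t\mapsto F(\omega,t)=f_\omega(t)$ is measurable. For assertion~2, if $f\in L_\infty(G)$ then $|f|\le\|f\|_\infty$ off an $m$-null set $N$; pulling $N$ back through $\Phi$ gives $|F|\le\|f\|_\infty$ off a $\mu$-null set, and a further use of Fubini produces, for a.e.\ $\omega$, the bound $\|f_\omega\|_\infty\le\|f\|_\infty$.

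For assertion~3 I would run the same machinery one dimension higher, on $G\times\R\times\R$ with the continuous map $(\omega,t,y)\mapsto\omega\beta(t-y)$, to see that $H(\omega,t,y):=f(\omega\beta(t-y))\,g(y)$ is jointly measurable. The decisive estimate is a Tonelli computation exploiting the invariance $\int_G |f(\omega\beta(s))|\,d\omega=\|f\|_{L_1(G)}$: for every bounded interval $I\subset\R$,
$$\int_G\int_I\int_\R |f(\omega\beta(t-y))|\,|g(y)|\,dy\,dt\,d\omega
=\int_I\int_\R |g(y)|\Big(\int_G |f(\omega)|\,d\omega\Big)\,dy\,dt
=|I|\,\|f\|_{L_1(G)}\,\|g\|_{L_1(\R)}<\infty\,.$$
Hence the inner $y$-integral is finite for a.e.\ $(\omega,t)\in G\times I$; exhausting $\R$ by intervals $I$ and taking a countable union of null sets, the convolution $g*f_\omega(t)=\int_\R f(\omega\beta(t-y))g(y)\,dy$ is defined for a.e.\ $(\omega,t)$, and Fubini makes $t\mapsto g*f_\omega(t)$ measurable for a.e.\ $\omega$.

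The only genuinely delicate point is the joint measurability of $f\circ\Phi$ for a merely measurable $f$, and this is precisely where the null-set pull-back lemma of the first step is indispensable, since without it one cannot control the a.e.\ limit $F=\lim_k F_k$. Everything afterwards is bookkeeping with Fubini/Tonelli together with the translation invariance of $m$.
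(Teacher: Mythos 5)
Your proof is correct and follows the same overall strategy as the paper's: establish joint measurability of $F(\omega,t)=f(\omega\beta(t))$ on the product space, then let Fubini--Tonelli and the translation invariance of the Haar measure do the rest. Two local differences are worth recording. First, the paper simply \emph{asserts} that $F$ is measurable, whereas you supply an actual argument (Lusin approximation by continuous $f_k$, Borel--Cantelli, and the pull-back lemma that $\Phi^{-1}$ maps $m$-null sets to $\mu$-null sets); since $f$ is only assumed Haar-measurable, the composition $f\circ\Phi$ is not automatically measurable, so your null-set pull-back step genuinely fills a gap the paper glosses over --- and note that the paper does use exactly this pull-back computation, but only later, when bounding $\|F\|_\infty$ in the second assertion. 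Second, for the convolution statement the paper makes the triple integral over $G\times\R\times\R$ finite by inserting the Poisson kernel $P_1(t)$ as an integrable weight and dividing it out at the end, while you localize to bounded intervals $I$ in $t$ and exhaust $\R$ by a countable family of them; the two devices are interchangeable, and yours is arguably the more elementary bookkeeping. Everything else (the Tonelli computation $\int_G|f(\omega\beta(s))|\,dm(\omega)=\|f\|_{L_1(G)}$ and the final Fubini slicing) matches the paper.
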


\begin{proof}
Denote by $\mu$ the product measure of the Lebesgue measure  $\lambda$ and the Haar measure $m$ on $\R\times G$.
Since the map
 \begin{equation} \label{regen}
 F\colon \R \times G \to \C,~~ (t,\omega) \mapsto f(\omega\beta(t))
 \end{equation}
  is measurable, $f_{\omega}$ is measurable on $\R$ for almost every $\omega \in G$ by the Fubini-Tonelli theorem.

For the proof of the second statement assume that  $f \in L_{\infty}(G)$.
   We claim  that $F \in L^{\infty}(\R \times G)$ and $\|F\|_{\infty} \le \|f\|_{\infty}$, which clearly  finishes the argument. Let $C>0$ and $N \subset G$ be a null set such that $|f(\omega)|\le C$ for all $\omega \in G\setminus N$. Since the map
$$M \colon \R  \times G \to G,\,\, M(t,\omega):=\beta(t)\omega$$ is continuous, the set $\widetilde{N}:=M^{-1}(N)$ is measurable.  For any fixed $y\in \R$ the set $\widetilde{N}_{y}:=N\beta(y)^{-1}=\{\omega \in G \mid \omega\beta(y) \in N\}$ is measurable. Since $m$ is translation invariant, we still have $m(\widetilde{N}_{y})=0$ for all $y \in \R$. Therefore  $\mu(\widetilde{N})=\int_{\R} m(\widetilde{N}_{y}) d\lambda(y)=0$. By definition, $|F(y,\omega)|\le C$ for all $(y,\omega) \notin \widetilde{N}$. Hence  $F \in L_{\infty}(\R\times G)$ and $\|F\|_{\infty}\le \|f\|_{\infty}$.\\
It remains to show the third statement, i.e. there is a null set $N \subset G$ such that for all $\omega \notin N$ the function $g*f_{\omega}$ is defined almost everywhere on $\R$ and measurable. Take the Poisson kernel $P_{1}(t):=\frac{1}{\pi}\frac{1}{1+t^{2}}$ on $\R$. Then  the map
$$F(\omega,t,y):=g(y)f(\omega\beta(t-y))P_{1}(t) \colon G \times \R \times \R \to \C$$
is measurable, and since the Haar measure $m$ on $G$ is translation invariant (and $\|P_{1}\|_{L_{1}(\R)}=1$) we obtain
\begin{align*}
\int_{\R}\int_{\R} \int_{G} |F(\omega,t,y)| dm(\omega) dt dy & \le \|g\|_{L_{1}(\R)} \|f\|_{L_{1}(G)}.
\end{align*}
So by the Fubini-Tonelli theorem there is a null set $N \subset G$ such that the map
$$\omega \mapsto \int_{\R}\int_{\R}g(y)f(\omega \beta(t-y))P_{1}(t) dy dt $$
is defined on $G\setminus N$ and measurable on $G$. Let $\omega \in G\setminus N$. Since
$$\int_{\R}\int_{\R}g(y)f(\omega\beta(t-y))P_{1}(t)dy dt < \infty,$$
another application of the Fubini-Tonelli theorem implies that the map
$$ t \mapsto \int_{\R}g(y)f(\omega\beta(t-y))P_{1}(t) dy= (g*f_{\omega})(t)P_{1}(t)$$
is defined for almost all $t \in \R$ and measurable. So  multiplying with the measurable function $t \mapsto P_{1}^{-1}(t)$ we obtain that
\begin{equation*}
g*f_{\omega}(t)=\int_{\R} g(y) f_{\omega}(t-y) dy
\end{equation*}
is defined almost everywhere on $\R$ and measurable.
\end{proof}
The second device allows to interpret functions in $L_{1}(\R)$ as measures on the Dirichlet group $G$. Therefore we denote by $M(G)$, $G$ any locally compact group, the Banach space of all bounded and regular complex Borel measures on $G$.
\begin{Lemm}\label{measuresembedd} Let $(G, \beta)$ be a Dirichlet group. Then there is a contraction $$\Phi \colon M(\R) \to M(G), ~~ \varphi \mapsto \mu$$ such that $\hat{\varphi} \circ \widehat{\beta}=\hat{\mu}$. In particular, every $f \in L^{1}(\R)$ defines a measure $\Phi(fd\lambda)$ on $G$. Moreover,   $\|f\|_{1}=\|\Phi(f)\|$
provided $f\ge 0$.
\end{Lemm}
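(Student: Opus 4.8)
The plan is to realize $\Phi$ as the dual of the composition-with-$\beta$ operator $C(G)\to C_b(\R)$, or equivalently as push-forward of measures along $\beta$. Concretely, given $\varphi\in M(\R)$, I would consider the functional
\[
L_\varphi\colon C(G)\to\C,\qquad L_\varphi(g):=\int_\R g(\beta(t))\,d\varphi(t).
\]
This is well defined because $g\in C(G)$ and the continuity of $\beta$ force $g\circ\beta$ to be continuous and bounded (with $\|g\circ\beta\|_\infty\le\|g\|_\infty$, using compactness of $G$), while $\varphi$ is a finite measure; hence $|L_\varphi(g)|\le\|g\|_\infty\,\|\varphi\|$, so $L_\varphi$ is bounded with $\|L_\varphi\|\le\|\varphi\|$. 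By the Riesz representation theorem there is a unique $\mu=:\Phi(\varphi)\in M(G)$ with $L_\varphi(g)=\int_G g\,d\mu$ for all $g\in C(G)$ and $\|\mu\|=\|L_\varphi\|\le\|\varphi\|$. Linearity of $\varphi\mapsto L_\varphi$ transfers to $\Phi$, so $\Phi$ is the desired contraction.

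Next I would check the Fourier--Stieltjes identity. Every character $\gamma\in\widehat{G}$ is a continuous function on $G$, hence an admissible test function in $C(G)$, and so is its conjugate $\overline{\gamma}$. Writing $x:=\widehat{\beta}(\gamma)$, by the very definition of the dual map we have $\gamma\circ\beta=e^{-ix\cdot}$. Plugging $\overline{\gamma}$ into $L_\varphi$ and using the Fourier--Stieltjes convention $\widehat{\mu}(\gamma)=\int_G\overline{\gamma}\,d\mu$ gives
\[
\widehat{\mu}(\gamma)=\int_G\overline{\gamma}\,d\mu=L_\varphi(\overline{\gamma})=\int_\R\overline{e^{-ixt}}\,d\varphi(t)=\widehat{\varphi}(x)=\big(\widehat{\varphi}\circ\widehat{\beta}\big)(\gamma),
\]
which is precisely $\widehat{\mu}=\widehat{\varphi}\circ\widehat{\beta}$.

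For the remaining assertions I would note that $f\mapsto f\,d\lambda$ embeds $L_1(\R)$ isometrically into $M(\R)$, so $\Phi(f\,d\lambda)$ is a well-defined element of $M(G)$, abbreviated $\Phi(f)$. If $f\ge0$, then $L_{f\,d\lambda}(g)=\int_\R g(\beta(t))f(t)\,dt\ge0$ whenever $g\ge0$, so $L_{f\,d\lambda}$ is a positive functional and $\Phi(f)$ is a positive measure. For positive measures the total variation equals the total mass, whence
\[
\|\Phi(f)\|=\Phi(f)(G)=\int_G 1\,d\Phi(f)=L_{f\,d\lambda}(\mathbf{1})=\int_\R f(t)\,dt=\|f\|_1,
\]
using that the constant function $\mathbf{1}$ lies in $C(G)$ since $G$ is compact.

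The computations are routine; the only points requiring care are that the representing measure produced in the first step is automatically regular---this is exactly what the Riesz--Markov--Kakutani theorem guarantees on the compact space $G$, so that no metrizability of $G$ is needed---and that the Fourier--Stieltjes conventions be kept consistent, so that the restriction $\gamma\circ\beta=e^{-ix\cdot}$ of a character of $G$ along $\beta$ matches the character of $\R$ indexed by $x=\widehat{\beta}(\gamma)$. I do not expect any genuine obstacle beyond this bookkeeping.
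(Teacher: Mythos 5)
Your proposal is correct and follows essentially the same route as the paper: both define $\Phi(\varphi)$ as the push-forward of $\varphi$ along $\beta$ via the bounded functional $g\mapsto\int_\R g\circ\beta\,d\varphi$ on $C(G)$, apply the Riesz representation theorem, and verify the Fourier--Stieltjes identity by testing against characters. The only cosmetic difference is in the final step: the paper gets $\|\Phi(f)\|\ge\|f\|_1$ by testing against $h_0\circ\beta=\mathbf{1}$, while you argue via positivity of $\Phi(f)$ and equality of total variation with total mass --- these amount to the same evaluation at the constant function.
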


\begin{proof}
Let $\varphi \in M(\R)$.  Then we define $\mu := \Phi(\varphi)$ to be the push forward measure of $\varphi$ with respect to $\beta$. Indeed, the functional
$$T_{\varphi}\colon C(G) \to \C, ~~ g \mapsto \int_{\R} g \circ \beta ~d\varphi$$
has norm  $ \le \|\varphi\|$, which by the Riesz's representation theorem implies that  $\mu \in M(G)$
with $ \|\mu\|\le \|\varphi\|$. Moreover,
$\widehat{\varphi}(x)=T_{\varphi}(\overline{h_{x}})=\widehat{\mu}(h_x)$ for all $x \in \widehat{\beta}(\widehat{G})$. For the last statement of the lemma note that $h_{0}\circ \beta=1$ so that $\|\Phi(f)\|\ge \left| \int_{\R} f(t) dt\right|=\|f\|_{1}$ provided $f \in L_{1}(\mathbb{R})$ with $f\ge 0$.
\end{proof}

 Observe, that the choice $(G,\beta)=(\T,\beta_{\T})$ and $\varphi=(\chi_{[0,2\pi]}-\chi_{[2\pi,4\pi]}) d\lambda$ leads to $\|\Phi(\varphi)\|=0<4\pi=\|\varphi\|$, so that in general $\Phi$ may fail to be isometric.
\medskip

As a first application of Lemma \ref{measuresembedd} we interpret the Poisson kernel on $\R$
 $$P_{u}\colon \R \to \R, ~ t\mapsto\frac{1}{\pi} \frac{u}{u^{2}+t^{2}}\,,~u>0,$$
as a measure on $G$. More precisely we define the 'Poisson measure' $p_{u}:=\Phi(P_{u}dt) \in M(G)$. Then $\|p_{u}\|=1$ and for $f\in L_{p}(G)$ we obtain $f*p_{u} \in L_{p}(G)$ with $\widehat{f*p_{u}}(h_{x})=\widehat{f}(h_{x})e^{-u|x|}$ for all $x \in \widehat{\beta}(\widehat{G})$. See Lemma~\ref{limituto0}, where this
convolution is needed.

\medskip

\subsection{Hardy spaces over  groups} The following definition is essential.

\begin{Defi} \label{defihardygroups}
Let $G$ be a compact abelian group $G$,  $E \subset \widehat{G}$, and $1\leq p \leq \infty$. Then we denote
    the subspace of all $f \in L_{p}(G)$ which have Fourier transforms $\widehat{f}:\widehat{G} \to \mathbb{C}$
    supported by $E$ (i.e. $\widehat{f}(\gamma)\ne0$ implies $\gamma \in E$) by $H_{p}^{E}(G)$.
 \end{Defi}

 Note that  $H_{p}^{E}(G)\subset L^{p}(G)$ is closed by continuity of the Fourier transform,
 and hence it is a Banach space. Additionally we define $M_{E}(G)$ as the subspace of all regular and bounded complex Borel measures $\mu \in M(G)$ such that $\widehat{\mu}(\gamma)\ne 0$ implies $\gamma \in E$. Further, we write $Pol_{E}(G):=\operatorname{span}_{\C}E$.

 \medskip

  We need the following two  tools (Proposition~\ref{densitytrigonometric} and Proposition~\ref{dualmapping1}) which follow from standard arguments from harmonic analysis. For the sake of completeness we give full proofs.

\begin{Prop} \label{densitytrigonometric}
Let $1\le p< \infty$, $G$ be a compact abelian group and $E\subset \widehat{G}$. Then  $Pol_{E}(G)$ is dense in $(H_{p}^{E}(G), \|\cdot\|_{p})$, and dense in $H_{\infty}^{E}(G)$ with respect to the weak star topology $w(L_{\infty}(G),L_{1}(G))$.
\end{Prop}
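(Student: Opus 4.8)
The plan is to produce, for a given $f \in H_p^E(G)$, an explicit approximating net inside $Pol_{E}(G)$ by convolving $f$ against a summability kernel made of \emph{trigonometric polynomials}. The decisive point is that convolution with a polynomial both keeps us inside $Pol_{E}(G)$ and converges back to $f$, so the whole argument rests on choosing a good kernel.

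First I would fix a net $(k_\alpha)$ of trigonometric polynomials on $G$ that forms an approximate identity: $k_\alpha \ge 0$, real and even (i.e. $k_\alpha(\omega) = k_\alpha(\omega^{-1})$), $\int_G k_\alpha \, dm = 1$, and $\int_{G \setminus V} k_\alpha\, dm \to 0$ for every neighborhood $V$ of the identity. The existence of such trigonometric-polynomial kernels on an arbitrary compact abelian group is standard (see \cite{Rudin62}): one starts from a continuous approximate identity $\psi_V = \varphi_V * \widetilde{\varphi_V}$ built from nonnegative bumps $\varphi_V$ concentrated in shrinking symmetric neighborhoods $V$ of the identity, and then approximates each $\psi_V \in C(G)$ uniformly by trigonometric polynomials (Stone--Weierstrass), correcting normalization. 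Such kernels satisfy $\|k_\alpha * g - g\|_p \to 0$ for every $g \in L_p(G)$, $1 \le p < \infty$, while $\|k_\alpha\|_1 = 1$ for all $\alpha$.

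The key observation is the following: if $k = \sum_{\gamma \in F} \widehat{k}(\gamma)\gamma$ is a trigonometric polynomial (with $F \subset \widehat{G}$ finite) and $f \in H_p^E(G)$, then $\widehat{k * f}(\gamma) = \widehat{k}(\gamma)\widehat{f}(\gamma)$, so that $k * f = \sum_{\gamma \in F \cap E} \widehat{k}(\gamma)\widehat{f}(\gamma)\,\gamma \in Pol_{E}(G)$; the terms with $\gamma \notin E$ drop out because $\widehat{f}$ is supported in $E$. Thus each $k_\alpha * f$ lies in $Pol_{E}(G)$, and for $1 \le p < \infty$ the convergence $k_\alpha * f \to f$ in $\|\cdot\|_p$ yields the first assertion at once. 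For $p = \infty$ and $f \in H_\infty^E(G)$ the same computation gives $k_\alpha * f \in Pol_{E}(G)$ with $\|k_\alpha * f\|_\infty \le \|f\|_\infty$. To obtain weak-star convergence, fix $g \in L_1(G)$; a change of variables using that $k_\alpha$ is real and even yields $\int_G (k_\alpha * f)\,\overline{g}\,dm = \int_G f\,\overline{(k_\alpha * g)}\,dm$, and since $k_\alpha * g \to g$ in $L_1(G)$ the right-hand side tends to $\int_G f\,\overline{g}\, dm$. Hence $k_\alpha * f \to f$ in $w(L_\infty(G), L_1(G))$.

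The main obstacle is the very first step: producing an approximate identity consisting of \emph{trigonometric polynomials} on a general compact abelian group, rather than merely of $L_1$-functions. It is exactly the polynomial nature of $k_\alpha$ (finite Fourier support) that forces $k_\alpha * f \in Pol_{E}(G)$, and it is the evenness of $k_\alpha$ that makes the adjoint computation in the weak-star step clean; once the kernel is in hand, everything else is routine.
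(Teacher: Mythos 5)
Your argument is correct, and it reorganizes the proof in a way that genuinely differs from the paper's. For $1\le p<\infty$ the paper also convolves with an approximate identity, but with the normalized indicators $u_V=\frac{1}{m(V)}\chi_V$, which only lands in $C(G)\cap H_p^E(G)$; it then invokes a second, separate fact (Rudin, \S 8.7.3) that continuous functions with spectrum in $E$ are uniform limits of polynomials in $Pol_E(G)$. You collapse these two steps into one by insisting from the start that the kernel itself be a trigonometric polynomial, so that $k_\alpha*f\in Pol_E(G)$ immediately; the price is that you must justify the existence of such a polynomial kernel, which is essentially the content of the Rudin result the paper cites anyway (and of the ``local units'' of \cite[Theorem 2.6.8]{Rudin62} used later in the paper as Lemma~\ref{localunits}). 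For $p=\infty$ the difference is more substantial: the paper uses the bipolar theorem and a computation of the polars $Pol_{E}(G)^{\circ_{L^{1}}}=H_{1}^{G\setminus E}(G)$ and $\left(H_{1}^{G\setminus E}(G)\right)^{\circ_{L^{\infty}}}=H_{\infty}^{E}(G)$, whereas your adjoint identity $\int_G (k_\alpha*f)\,\overline{g}\,dm=\int_G f\,\overline{(k_\alpha*g)}\,dm$ gives an explicit, norm-bounded approximating net, which is a slightly stronger and more constructive conclusion. One soft spot: a uniform approximation of the nonnegative function $\psi_V$ by a trigonometric polynomial need not itself be nonnegative, so as written your kernel may fail $k_\alpha\ge 0$. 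This is harmless --- either drop positivity (the argument only needs $\int_G k_\alpha\,dm=1$, $\sup_\alpha\|k_\alpha\|_1<\infty$ and $\int_{G\setminus V}|k_\alpha|\,dm\to 0$, all of which survive a small uniform perturbation) or take $k_\alpha=c\,|q|^2$ with $q$ a polynomial uniformly close to $\sqrt{\varphi_V}$, which is again a trigonometric polynomial and is pointwise nonnegative. With that repair the proof is complete.
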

\begin{proof}
Let $f \in H_{p}^{E}(G)$. We claim, that there is a sequence $(u_{n}) \subset L_{\infty}(G)$ such that $(f*u_{n})$ converges to $f$ with respect to the norm of $H_{p}^{E}(G)$. Then, since all $f*u_{n}\in C(G)$,  using the fact that every continuous function in $H_{p}^{E}(G)$ can be approximated with respect to the sup-norm on $G$ by polynomials in $Pol_{E}(G)$  (see \cite[\S 8.7.3]{Rudin62}), the claim follows. Let $\varepsilon >0$. Since the translation map $T \colon G \to L_{p}(G), ~ y\mapsto f_{y}:=f(y-\cdot)$ is uniformly continuous, there is a zero-neighborhood $V$, such that $\|f-f_{y}\|_{p}< \varepsilon$ for all $y \in V$. Let $m$ be the Haar measure of $G$. Then $m(V)>0$ and $u_{V}:=\frac{1}{m(V)}\chi_{V} \in L_{\infty}(G)$ with $\|u_{V}\|_{1}=1$. Since
\begin{equation*}
(f*u_{V})(x)-f(x)=\int_{G} (f(x-y)-f(x))u_V(y) ~dy,
\end{equation*}
we obtain with Minkowski's inequality
\begin{align*}
\|f-f*u_{V}\|_{p} &= \left( \int_{G} \left| \int_{G} (f(x-y)-f(x)) u_{V}(y) dy \right|^{p} dx\right)^{\frac{1}{p}} \\ &\le \int_{G}\left( \int_{G} \left|(f(x-y)-f(x))u_{V}(y)\right|^{p} dx\right)^{\frac{1}{p}} dy\\&=\int_{G} \|f-f_{y}\|_{p} |u_{V}(y)| dy\le \varepsilon .
\end{align*}
 Further $\widehat{f*u_{V}}=\widehat{f} \,\,\widehat{u_{V}}$, which implies, that $f*u_{V} \in H_{p}^{E}(G)$.
  Finally, for the case $p = \infty$ note that by the bipolar theorem $$\overline{Pol_{E}(G)}^{w^{*}}=\left(Pol_{E}(G)^{\circ_{L^{1}}}\right)^{\circ_{L^{\infty}}}.$$  Then a direct calculation shows that $Pol_{E}(G)^{\circ_{L_{1}}}=H_{1}^{G\setminus E} (G)$ and $\left(H_{1}^{G\setminus E} (G)\right)^{\circ_{L_{\infty}}}=H_{\infty}^{E}(G)$ which finishes the proof.
 \end{proof}

For the proof of our second tool we need more abstract harmonic analysis.

\begin{Lemm} \label{kuhvomeis}

Let $1\le p \le \infty$, $G$ and  $H$  compact abelian groups, and $\phi \colon G \to H$  an injective continuous homomorphism with dense range. Then there are onto isometries
\begin{equation} \label{B1}
T_{p} \colon L_{p}(H) \to L_{p}(G) \,\,\, \text{ with $\widehat{f}=\widehat{T_{p}(f)} \circ \widehat{\phi}$}
\end{equation}
and
\begin{equation} \label{B2}
T \colon M(H)\to M(G) \,\,\, \text{ with $\widehat{\mu}=\widehat{T(\mu)} \circ \widehat{\phi}$\,.}
\end{equation}

\end{Lemm}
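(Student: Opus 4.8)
The plan is to first upgrade the hypotheses on $\phi$ to the much stronger statement that $\phi$ is a topological isomorphism of $G$ onto $H$, and then to realize both $T_p$ and $T$ as the induced change of variables. The key observation is that, although $\phi$ is only assumed injective with dense range, these two properties together with compactness already force $\phi$ to be onto and a homeomorphism. Indeed, a continuous injection from the compact space $G$ into the Hausdorff space $H$ is automatically a homeomorphism onto its image $\phi(G)$, and $\phi(G)$, being a continuous image of a compact set, is compact and hence closed in $H$; since $\phi$ has dense range this yields $\phi(G)=H$. Thus $\phi$ is a continuous group isomorphism whose inverse is continuous, i.e. a topological isomorphism. (Equivalently one may argue on the dual side via Lemma~\ref{dualmapping}: injectivity of $\phi$ makes $\widehat{\phi}$ have dense range, while density of the range of $\phi$ makes $\widehat{\phi}$ injective; as $\widehat{G}$ and $\widehat{H}$ carry the discrete topology, dense range means surjectivity, so $\widehat{\phi}\colon \widehat{H}\to\widehat{G}$ is an isomorphism of discrete groups.)

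Once $\phi$ is known to be a topological isomorphism, I would transport the Haar measure. The pushforward $\phi_\ast m_G$ is a translation invariant probability measure on $H$, hence by uniqueness of the normalized Haar measure it equals $m_H$; in other words $\int_H g\, dm_H=\int_G g\circ\phi\, dm_G$ for all $g\in C(H)$, and by approximation for all integrable $g$. I then define $T_p(f):=f\circ\phi$ for $f\in L_p(H)$. The change of variables immediately gives $\|T_p f\|_{L_p(G)}=\|f\|_{L_p(H)}$ for every $1\le p\le\infty$ (for $p=\infty$ using that $\phi$ and $\phi^{-1}$ both preserve the Haar measure, hence null sets), so $T_p$ is an isometry; it is onto since $g\mapsto g\circ\phi^{-1}$ provides the inverse.

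The intertwining relation is then checked on characters. For $\chi\in\widehat{H}$ the same change of variables yields $\widehat{f}(\chi)=\int_H f\,\overline{\chi}\, dm_H=\int_G (f\circ\phi)\,\overline{\chi\circ\phi}\, dm_G=\widehat{T_p f}(\chi\circ\phi)=\widehat{T_p f}\big(\widehat{\phi}(\chi)\big)$, which is exactly $\widehat{f}=\widehat{T_p(f)}\circ\widehat{\phi}$. For the measure version I would set $T(\mu):=(\phi^{-1})_\ast\mu$, the pushforward of $\mu$ along the homeomorphism $\phi^{-1}$; this preserves total variation, so $T$ is an isometry of $M(H)$ onto $M(G)$ with inverse $\nu\mapsto\phi_\ast\nu$, and the identity $\widehat{\mu}=\widehat{T(\mu)}\circ\widehat{\phi}$ follows by the identical computation with $\int\,\cdot\, dm$ replaced by $\int\,\cdot\, d\mu$.

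The only real content lies in the first paragraph: recognizing that the apparently weak assumptions of injectivity and dense range already pin $\phi$ down as a topological isomorphism. Everything afterwards is a routine change of variables, and the main point to be careful about is the $p=\infty$ and the total-variation case, where one must use that a measure preserving homeomorphism (in both directions) genuinely preserves essential suprema and total variations.
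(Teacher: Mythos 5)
Your proof is correct, and it takes a genuinely different and shorter route than the paper's. Your key observation --- that a continuous injective homomorphism with dense range between compact (Hausdorff) abelian groups is automatically a topological isomorphism, since $\phi(G)$ is compact, hence closed, hence all of $H$, and a continuous bijection from a compact space onto a Hausdorff space is a homeomorphism --- is valid, as is the dual-side version via Lemma~\ref{dualmapping} (bijectivity of $\widehat{\phi}$ between discrete groups plus Pontryagin duality). From there the uniqueness of normalized Haar measure gives $\phi_{*}m_G=m_H$, and both \eqref{B1} and \eqref{B2} collapse to a change of variables; your intertwining computations on characters are exactly right, and you correctly flag the only delicate points ($p=\infty$ via preservation of null sets in both directions, and total variation under pushforward by a homeomorphism). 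The paper instead never makes the surjectivity of $\phi$ explicit: it builds the composition operator $\Phi_\infty\colon C(H)\to C(G)$ on polynomials (using bijectivity of $\widehat{\phi}$ and density of the range of $\phi$ for the sup-norm), dualizes via Riesz representation to obtain $T$ on measures, passes to $L_1$ by density of polynomials, bootstraps to all $L_p$ through an auxiliary isometry on $(C(H),\|\cdot\|_1)$, and recovers $L_\infty$ by dualizing $T_1$. That route has the virtue of running entirely in parallel with the subsequent Proposition~\ref{dualmapping1}, where the homomorphism is genuinely non-injective and one must work with the quotient $G/\ker\phi$; your route has the virtue of exposing the real reason the lemma holds and of handling all $p$ and the measure case uniformly in a few lines. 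One cosmetic remark: once you know $\phi_{*}m_G=m_H$ as measures, the identity $\int_H g\,dm_H=\int_G g\circ\phi\,dm_G$ for all integrable $g$ is the standard pushforward integration formula and needs no separate approximation step.
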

\begin{proof}
Since $G$ is compact, the dual group $\widehat{G}$ carries  the discrete topology, and so by Lemma \ref{dualmapping} the map $\widehat{\phi} \colon \widehat{H} \to \widehat{G}, ~ \gamma \mapsto \gamma \circ \phi$ is bijective. This implies that
$$\Phi_\infty \colon (C(H), \|\cdot\|_{\infty}) \to (C(G), \|\cdot\|_{\infty}), ~~ f \mapsto f\circ \phi$$ is an onto isometry
(to see this define  $\Phi_\infty$ first only for  polynomials and extend by density). Taking
the transposed of $\Phi_{\infty}^{-1}$, we by the Riesz representation theorem get the onto isometry
\begin{equation*} \label{OOO}
T:=(\Phi_{\infty}^{-1})':   M(H)=C(H)^{\prime} \to M(G)=C(G)^{\prime}\,,
\end{equation*}
which for all $\mu \in C(H)^{\prime}$ satisfies $\widehat{\mu}=\widehat{T(\mu)} \circ \widehat{\phi}$; indeed,
for $h \in \widehat{H}$
\[
\widehat{T(\mu)}(\widehat{\phi}(h)) = [(\Phi_{\infty}^{-1})'(\mu)](\overline{h \circ \phi})
= \mu(\Phi_{\infty}^{-1}(\overline{h} \circ \phi)) = \widehat{\mu}(h)\,.
\]
 This
proves \eqref{B2}. Next we check \eqref{B1} for $p=1$. Since  the two embeddings $i_G: L_{1}(G)\hookrightarrow M(G)$ and $i_H: L_{1}(H)\hookrightarrow M(H)$
 are isometric, we from another application of the fact that $\widehat{\phi}:\widehat{H} \to \widehat{G}$
is bijective and \eqref{B2} obtain that the map
\begin{equation*} \label{AAA} T_0: (Pol(H), \|\cdot\|_{1}) \to (Pol(G), \|\cdot\|_{1}), ~~ P \mapsto P\circ \phi
\end{equation*}
is an isometric bijection; for this check that
$$ (T \circ i_H)(h) =(i_G \circ T_0)(h)   \,\,\,\text{for each $h \in \widehat{H}$}\,.$$
 Additionally, we have
$\widehat{P \circ \phi}\circ \widehat{\phi}= \widehat{P}$ for all $P \in Pol(H)$,
and consequently by density of the polynomials we find as desired the isometric bijection
$T_1:L_{1}(H) \rightarrow L_{1}(G)$, which indeed satisfies
$\widehat{T_{1}(f)} \circ \widehat{\phi}= \widehat{f}$ for all $f\in L_{1}(H)$.
   In order to extend this result to all $L_p$'s note first that
\begin{equation} \label{zui}
\Phi_1: (C(H), \|\cdot\|_{1}) \mapsto (C(G), \|\cdot\|_{1}), ~~ f \mapsto f\circ \phi
\end{equation}
defines an onto isomorphism. Indeed, let us first check that $\Phi_1$ is an isometry. Take  $f \in C(H)$, and $P_{N} \in Pol(H)$ such that $\lim_{N} P_{N}=f$ in $C(H)$. Then $f\circ \phi=\Phi_1(f)=\lim_{N}\Phi_{\infty}(P_{N})=\lim_{N} P_{N}\circ \phi$ in $C(G)$ by continuity of
$\Phi_\infty$, and hence  in particular $\|f\circ \phi\|_{1}=\lim_{N} \|P_{N}\circ \phi\|_{1}=\lim_{N} \|P_{N}\|_{1}=\|f\|_{1}$. It remains to prove that $\Phi_1$ is onto. So let  $g \in C(G)$ and $Q_{N} \in Pol(G)$ such that $\lim_{N} Q_{N}=g$ in $C(G)$. Define $P_{N}:=\Phi_{\infty}^{-1}(Q_{N})$. Then $(P_{N})$ is Cauchy in $C(H)$ with limit, say, $f:=\lim_{N}P_{N}$. This implies that $\|f\|_{1}=\|g\|_{1}$, and we for all $x\in G$ have
$$f\circ \phi(x)=\lim_{N} P_{N}(\phi(x))=\lim_{N} Q_{N}(x)=g(x).$$
Now $(\ref{zui})$ in particular implies that
$$(Pol(H), \|\cdot\|_{p}) \mapsto (Pol(G), \|\cdot\|_{p}), ~~ P \mapsto P\circ \phi$$
is an onto isometry, and we  as desired  see that by density
of the polynomials
the onto isometry $T_p:L_{p}(H) \rightarrow L_{p}(G)$ for all $1\le p<\infty$ exists.
Finally, the remaining case $p =\infty$ follows by duality from the case $p=1$:
Define $T_\infty = [(T_1)']^{-1}: L_{\infty}(H) \rightarrow L_{\infty}(G) $.
\end{proof}

Moreover, we need the following fact from the theory of idempotent measures (see \cite[\S 3.1.2, p. 59 and \S 3.1.3, Theorem, p. 60]{Rudin62}).

\begin{Lemm} \label{idempotent} Let $G$ be a compact abelian group and $U \subset \widehat{G}$ a subgroup. Then there is some $\mu \in M(G)$ with  $\|\mu \|=1$ and $\widehat{\mu}=\chi_{U}$, where $\chi_{U}$ is the characteristic function on $U$.
\end{Lemm}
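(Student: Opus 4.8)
The plan is to construct the measure explicitly: the required $\mu$ will be the normalized Haar measure of the annihilator of $U$, transported to $G$. The starting observation is that, since $G$ is compact, its dual $\widehat{G}$ is discrete, so the subgroup $U \subset \widehat{G}$ is automatically closed. First I would form the annihilator
\[
U^{\perp} := \{\omega \in G \mid \gamma(\omega) = 1 \text{ for all } \gamma \in U\} \subset G,
\]
which is a closed, hence compact, subgroup of $G$. Let $m_{U^{\perp}}$ denote its normalized Haar measure, and let $\mu \in M(G)$ be its push-forward under the inclusion $U^{\perp} \hookrightarrow G$. Being the image of a probability measure carried by a compact set under a continuous injection, $\mu$ is a regular Borel probability measure on $G$, so $\|\mu\| = 1$ is immediate, and $\mu \in M(G)$.

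Next I would compute the Fourier transform of $\mu$. For $\gamma \in \widehat{G}$ the restriction $\gamma|_{U^{\perp}}$ is a character of the compact group $U^{\perp}$, and since $\mu$ is supported on $U^{\perp}$,
\[
\widehat{\mu}(\gamma) = \int_{G} \overline{\gamma(\omega)} \, d\mu(\omega) = \int_{U^{\perp}} \overline{\gamma(\omega)} \, dm_{U^{\perp}}(\omega).
\]
By the orthogonality relations for characters on a compact abelian group this integral equals $1$ when $\gamma|_{U^{\perp}}$ is the trivial character and $0$ otherwise; note that the complex conjugate is harmless here, as $U$ is a group and hence stable under $\gamma \mapsto \overline{\gamma}$. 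Thus $\widehat{\mu}(\gamma) = 1$ precisely when $\gamma$ annihilates $U^{\perp}$, i.e. when $\gamma \in (U^{\perp})^{\perp}$, and $\widehat{\mu}(\gamma) = 0$ otherwise.

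It then remains to identify $(U^{\perp})^{\perp}$ with $U$. Applying the duality fact $(V^{\perp})^{\perp} = \overline{V}$ recorded before Lemma \ref{dualmapping} (from \cite{QQ}), taken now with $V = U$ regarded as a subgroup of $\widehat{G}$ — so that the two annihilation operations are performed alternately in $G = \widehat{\widehat{G}}$ and in $\widehat{G}$ via Pontryagin duality — gives $(U^{\perp})^{\perp} = \overline{U}$, the closure being taken in $\widehat{G}$. Since $\widehat{G}$ is discrete this closure is simply $U$, whence $\widehat{\mu}(\gamma) = \chi_{U}(\gamma)$ for every $\gamma \in \widehat{G}$, as claimed. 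The genuinely delicate points are two bookkeeping ones, and both are exactly where the compactness of $G$ is used: one must keep the two annihilators straight across the identification $\widehat{\widehat{G}} = G$, and one must invoke discreteness of $\widehat{G}$ to know that $U$ is already closed. Everything else reduces to the standard orthogonality relations on the compact group $U^{\perp}$, so I do not expect a serious obstacle beyond this careful duality bookkeeping.
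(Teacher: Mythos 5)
Your proof is correct. Note, however, that the paper does not prove this lemma at all: it is quoted as a known fact with a reference to Rudin's theory of idempotent measures (\S 3.1 of \emph{Fourier analysis on groups}), where it sits inside the much more general structural description of measures with idempotent Fourier--Stieltjes transform via the coset ring. What you have written is the standard direct construction underlying that special case (it is essentially Rudin's Theorem 2.7.2, that the normalized Haar measure of a compact subgroup $H\subset G$, viewed as an element of $M(G)$, has Fourier--Stieltjes transform $\chi_{H^{\perp}}$, applied to $H=U^{\perp}$), and it has the advantage of being short, self-contained, and of using only the orthogonality relations plus the duality fact $(V^{\perp})^{\perp}=\overline{V}$ already recorded in the paper before Lemma \ref{dualmapping}. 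Your two flagged bookkeeping points are exactly the right ones: discreteness of $\widehat{G}$ is what closes $U$, and the biduality identification $G=\widehat{\widehat{G}}$ is what lets the two annihilators be taken alternately in $G$ and $\widehat{G}$. (The aside about $U$ being stable under conjugation is not actually needed, since the integral $\int_{U^{\perp}}\gamma\,dm_{U^{\perp}}$ takes only the real values $0$ and $1$, but it does no harm.) So your argument is a legitimate, and arguably preferable, replacement for the paper's external citation.
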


Finally, we are prepared to prove the announced  second tool; note that for injective maps $\phi$ and $E = \widehat{H}$ this is precisely Lemma \ref{kuhvomeis}.

 \begin{Prop} \label{dualmapping1}
Let $1\le p \le \infty$, $G$ and $H$ two compact abelian groups,  and  $\phi \colon G \to H$  a continuous  homomorphism with dense range. Then for all $E \subset \widehat{H}$ there are onto isometries
\begin{equation} \label{A1}
\Psi_p \colon  H_{p}^{E}(H)\to H_{p}^{\widehat{\phi}(E)}(G)
\,\,\, \text{ with $\widehat{f} =\widehat{\Psi_p(f)}\circ \widehat{\phi}$\,,}
\end{equation}
and
\begin{equation} \label{A2}
\Psi \colon M_{E}(H) \to M_{\widehat{\phi}(E)}(G)
\,\,\, \text{ with $\hat{\mu}=\widehat{\Psi(\mu)}\circ \widehat{\phi}$\,.}
\end{equation}
\end{Prop}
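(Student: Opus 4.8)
The plan is to mirror the injective situation of Lemma~\ref{kuhvomeis}, using the prescribed Fourier support $E$ to recover surjectivity once $\phi$ fails to be injective. Since $\phi$ has dense range, Lemma~\ref{dualmapping} together with Pontryagin duality shows that $\widehat\phi\colon\widehat H\to\widehat G$ is \emph{injective}; as $\widehat G$ is discrete, $\widehat\phi$ is a group isomorphism onto $\operatorname{Im}\widehat\phi=(\ker\phi)^{\perp}$, the last equality being the annihilator identity $(\ker\phi)^{\perp}=\overline{\operatorname{Im}\widehat\phi}$ from the proof of Lemma~\ref{dualmapping}. In particular $\widehat\phi$ restricts to a bijection of $E$ onto $\widehat\phi(E)$, and the assignment $\gamma\mapsto\gamma\circ\phi=\widehat\phi(\gamma)$ extends linearly to a bijection $Pol_{E}(H)\to Pol_{\widehat\phi(E)}(G)$, $P\mapsto P\circ\phi$, which satisfies $\widehat P=\widehat{P\circ\phi}\circ\widehat\phi$ by inspection of the coefficients.

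For $1\le p<\infty$ this already suffices. First I would check that $\phi$ carries the Haar measure of $G$ to that of $H$: the probability measure $\phi_{*}m_{G}$ on $H$ has Fourier coefficients $\int_{G}\overline{\widehat\phi(\gamma)}\,dm_{G}$, which equal $1$ if $\gamma$ is trivial and $0$ otherwise (here the injectivity of $\widehat\phi$ is used), so $\phi_{*}m_{G}=m_{H}$. Hence $\int_{G}|P\circ\phi|^{p}\,dm_{G}=\int_{H}|P|^{p}\,dm_{H}$ for $P\in Pol(H)$, i.e.\ $P\mapsto P\circ\phi$ is $\|\cdot\|_{p}$-isometric. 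By Proposition~\ref{densitytrigonometric} the polynomial spaces are dense in $H_{p}^{E}(H)$ and $H_{p}^{\widehat\phi(E)}(G)$, so this extends uniquely to an isometry $\Psi_{p}\colon H_{p}^{E}(H)\to H_{p}^{\widehat\phi(E)}(G)$, with $\widehat f=\widehat{\Psi_{p}(f)}\circ\widehat\phi$ persisting by continuity of the Fourier coefficients. Surjectivity is then automatic, since the image of $\Psi_{p}$ is complete, hence closed, and contains the dense subspace $Pol_{\widehat\phi(E)}(G)$.

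For $p=\infty$ and for measures I would instead factor $\phi=\tilde\phi\circ q$, where $q\colon G\to G/\ker\phi$ is the quotient map and $\tilde\phi\colon G/\ker\phi\to H$ is the induced injective continuous homomorphism with dense range. Since $\widehat{\tilde\phi}$ is bijective, Lemma~\ref{kuhvomeis} applies to $\tilde\phi$ and, intertwining Fourier transforms through $\widehat{\tilde\phi}$, restricts to onto isometries $H_{\infty}^{E}(H)\to H_{\infty}^{\widehat{\tilde\phi}(E)}(G/\ker\phi)$ and $M_{E}(H)\to M_{\widehat{\tilde\phi}(E)}(G/\ker\phi)$. It then remains to lift along the surjection $q$. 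Pullback $g\mapsto g\circ q$ is a $\|\cdot\|_{\infty}$-isometry $L_{\infty}(G/\ker\phi)\to L_{\infty}(G)$ (again because $q_{*}m_{G}$ is Haar measure), whose range is exactly the $\ker\phi$-invariant functions, namely those with Fourier support in $(\ker\phi)^{\perp}=\operatorname{Im}\widehat q$; since $\widehat q(\widehat{\tilde\phi}(E))=\widehat\phi(E)\subset(\ker\phi)^{\perp}$, every element of $H_{\infty}^{\widehat\phi(E)}(G)$ is automatically $\ker\phi$-invariant and thus lies in this range. For measures the analogous lift is the transpose $A'$ of the norm-one averaging projection $A\colon C(G)\to C(G/\ker\phi)$, $Af(\omega)=\int_{\ker\phi}f(\omega k)\,dk$; a short Fourier computation gives $\widehat{A'\nu}=\chi_{(\ker\phi)^{\perp}}\cdot(\widehat\nu\circ\widehat q^{-1})$, so $A'$ is an onto isometry $M_{\widehat{\tilde\phi}(E)}(G/\ker\phi)\to M_{\widehat\phi(E)}(G)$ with the required intertwining. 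Composing the two steps and using $\widehat\phi=\widehat q\circ\widehat{\tilde\phi}$ produces $\Psi_{\infty}$ and $\Psi$.

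The genuinely new difficulty compared with Lemma~\ref{kuhvomeis} is precisely the possible non-injectivity of $\phi$: then $\widehat\phi$ is not onto $\widehat G$, the pullback $C(H)\to C(G)$ misses all functions that are not $\ker\phi$-invariant, and one cannot hope for surjectivity onto all of $L_{p}(G)$ or $M(G)$. I expect this to be the main obstacle, and the key to overcoming it is the support hypothesis: because $\widehat\phi(E)$ is forced into $(\ker\phi)^{\perp}$, every function and measure in the target Hardy space is automatically invariant under $\ker\phi$ and therefore descends to $G/\ker\phi$, where Lemma~\ref{kuhvomeis} restores full surjectivity. The only real care lies in making this invariance-versus-support bookkeeping precise in the weak-star setting for $p=\infty$ and for $M_{\widehat\phi(E)}(G)$; for $1\le p<\infty$ the density argument above avoids it altogether.
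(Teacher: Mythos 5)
Your proof is correct, and its skeleton --- factor $\phi$ through $G/\ker\phi$, apply Lemma~\ref{kuhvomeis} to the induced injective map, then lift along the quotient --- is the same as the paper's. The differences are in how the three surjectivity statements are obtained. For $1\le p<\infty$ you bypass the quotient entirely: the observation $\phi_*m_G=m_H$ (via injectivity of $\widehat\phi$) makes $P\mapsto P\circ\phi$ an $L_p$-isometry on polynomials, and density (Proposition~\ref{densitytrigonometric}) on both sides finishes the argument; this is shorter than the paper's two-stage route, which also ultimately rests on density of $Pol_K(G)$ in $H_p^K(G)$. For $p=\infty$ the paper deduces surjectivity of the pullback $f\mapsto f\circ\pi$ from the $p<\infty$ case via $\|f\|_\infty=\lim_{p\to\infty}\|f\|_p$, whereas you argue that Fourier support in $\widehat\phi(E)\subset(\ker\phi)^\perp$ forces $\ker\phi$-invariance, so the target functions automatically descend to the quotient; both work, and yours isolates more clearly why the support hypothesis is what saves surjectivity. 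For measures your averaging projection $A$ over $\ker\phi$ is in substance identical to the paper's device: the idempotent measure of Lemma~\ref{idempotent} with $\widehat\mu=\chi_{\operatorname{Im}\widehat\phi}=\chi_{(\ker\phi)^\perp}$ is exactly the Haar measure of $\ker\phi$, and convolution with it is your operator $A$; the paper proves isometry of the transpose by restricting test functions to $C_K(G)$, while in your version you should add the one-line remark that $q_*\circ A'=\operatorname{id}$ with $\|q_*\|\le 1$ forces $\|A'\nu\|\ge\|\nu\|$ (the transpose of a norm-one projection is a priori only a contraction). With that small point made explicit, your argument is complete.
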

The construction of our proof shows that the mapping $\Psi$ from \eqref{A2} in fact is an extension 
of  the  mapping $\Psi_p$ from \eqref{A1}.

\begin{proof}
Since $\phi$ is continuous and has dense range, the map $$[\phi] \colon G/\ker \phi \hookrightarrow H, ~  [g] \mapsto \phi(g)$$
is injective, and it still has dense range.
Hence by Lemma \ref{kuhvomeis} we have that  $L_{p}(G/\ker \phi)=L_{p}(H)$ for all $1\le p\le \infty$ as well as $M( G/\ker \phi)=M(H)$, where the Fourier coefficients are preserved with respect to $\widehat{[\phi]}$,
and consequently
\[
H^{\widehat{[\phi]}(E)}_{p}(G/\ker \phi)=H^E_{p}(H)
\,\,\, \text{and}\,\,\, M_{\widehat{[\phi]}(E)}(G/\ker \phi)=M_E(H)\,.
\]
Moreover, we have the bijective  homomorphism
\[
\iota: \text{Im} \widehat{\phi} \hookrightarrow (G/\ker \phi)^{\widehat{}},\, \,\,
\widehat{\phi}(\gamma) \to \big[[g] \to \gamma(\phi( g))\big]
\]
(see also the  proof of Lemma \ref{dualmapping}, where we checked  that $(G/\ker \phi)^{\widehat{}} = (\ker \phi)^{\perp}=\overline{\text{Im} \widehat{\phi}} = \text{Im} \widehat{\phi}$, since $\widehat{H}$ carries the discrete
topology), and  hence it suffices  to show that
for every $K\subset \operatorname{Im} \widehat{\phi}$
\[
H_{p}^{K}(G)=H_{p}^{\iota (K)}(G/\ker \phi)
\,\,\, \text{and}\,\,\,
M_{K}(G)=M_{\iota (K)}(G/\ker \phi)\,.
\]
Let $\pi: G \to G/\ker \phi$ be the canonical surjection.
Since the Haar measure $m_{\pi}$ of $G_{1}/\ker \phi$ is the pushforward measure of $\pi$, the map
$$ L_{p}(G/\ker \phi) \to L_{p}(G), ~~ f \mapsto f\circ \pi$$
for all $1\le p \le \infty$ defines an into isometry. In particular, fixing $K\subset \operatorname{Im} \widehat{\phi}$, we obtain the isometric mapping
\begin{equation} \label{betty}
A_{p} \colon H_{p}^{\iota(K)}(G/\ker \phi) \to  H_{p}^{K}(G),~~ f \mapsto f\circ \pi.
 \end{equation}
 Since $Pol_{K}(G)$ is dense in $H_{p}^{K}(G)$ (Proposition \ref{densitytrigonometric}), we have that $A_{p}$ is onto for all  $1\le p<\infty$. For $p = \infty$ let $g \in H_{\infty}^{K}(G)$. Then, since $H_{\infty}^{K}(G) \subset H_{p}^{K}(G)$ for all $1 \leq p < \infty$, by what we have proved so far, there is $f\in \bigcap_{1 \leq p < \infty} H_{p}^{\iota (K)}(G/\ker \phi)$ such that $\|f\|_{p}= \|g\|_{p}\le \|g\|_{\infty}$ for all $1 \leq p < \infty$. Hence, $\lim_{p\to \infty} \|f\|_{p}\le \|g\|_{\infty}$, and so $f \in H_{\infty}^{\iota (K)}(G/\ker \phi)$, $A_{\infty}(f)=g$ and $\|f\|_{\infty}=\lim_{p\to \infty} \|f\|_{p}\le \|g\|_{\infty}$.  This completes the
proof of \eqref{A1}.

 For the proof of   \eqref{A2}  we use, given a compact abelian group $L$ and  $F \subset \widehat{L}$, the notation
 $$C(L)^{\prime}_{F}:=\{ \varphi \in C(L)^{\prime} \colon \varphi(\gamma)\ne 0 \Rightarrow \gamma \in F\}\,.$$
 Then by the Riesz representation theorem $M_{F}(L)=C(L)^{\prime}_{F}$.

 Fix again some $K\subset \operatorname{Im  \widehat{\phi}}$.
 Restricting the map $A_\infty$ in   \eqref{betty} to continuous functions, we conclude that the  map
 $$B_{\infty} \colon C_{\iota(K)}(G/\ker \phi) \to C_{K}(G), ~~ f \mapsto f\circ \pi$$
is also an onto isometry. Now  consider the mapping
$$\Phi \colon C(G)^{\prime}_{K}  \to C(G/\ker \phi)^{\prime}_{K},~~ \varphi \mapsto \varphi \circ B_{\infty}.$$
Then $\Phi$ is defined and isometric; indeed, since $\varphi$ is supported on $K$, we have
$$\|\Phi(\varphi)\|=\sup_{g \in C(G/\ker \phi), ~\|g\|_{\infty}=1} |\varphi(g\circ \pi)|=\sup_{f \in C_{K}(G), ~\|f\|_{\infty}=1} |\varphi(f)|=\|\varphi\|.$$
To prove surjectivity choose $\mu \in M(G)$ with $\|\mu\|=1$ and  $\widehat{\mu}=\chi_{\operatorname{Im \widehat{\phi}}}$, where $\chi_{\operatorname{Im} \widehat{\phi}}$ denotes the characteristic function on $\operatorname{Im} \widehat{\phi}$
(Lemma \ref{idempotent}). Then, given $\eta \in C(G/ \ker \phi)^{\prime}_{K}$, the functional
$\varphi \colon C(G)\to \C, \varphi(f):=\eta(f*\mu)$ belongs to  $C(G)^{\prime}_{K}$ and $\Phi(\varphi)=\eta$ (using again $C_{\iota (K)}(G/\ker \phi)=C_{\operatorname{Im} \widehat{\phi}}(G)$ via $B_{\infty}$). So $\Phi$ is an onto isometry, and now applying the Riesz representation theorem, the mapping $\Psi:=\Phi^{-1}$ fulfils the claim.
\end{proof}

\smallskip

\subsection{Hardy spaces over $\pmb{\lambda}$-Dirichlet groups}\label{Bohrcompact}
The following notion will turn out to be fundamental for our purposes.
\begin{Defi}
Let $\lambda$ be a frequency and $(G, \beta)$  a Dirichlet group. Then $G$ is called $\lambda$-Dirichlet group whenever $\lambda \subset \widehat{\beta}(\widehat{G})$.
\end{Defi}
So for instance for ordinary Dirichlet series, $\lambda_{n}=\log(n)$, the groups $\T^{\infty}$  and $\overline{\R}$ are $\log(n)$-Dirichlet groups. But more can be said.

\begin{Exam}
From Example \ref{examples1} we immediately deduce that the pair $(\overline{\R}, \beta_{\overline{\R}})$
forms a $\lambda$-Dirichlet group for any frequency $\lambda$.
\end{Exam}

We explain in Section \ref{severalvariables} that for every frequency $\lambda$ there is a suitable map $T_{\lambda}$ such that $(\widehat{\mathbb{Q}_d}^{\infty}, T_{\lambda})$ is  a $\lambda$-Dirichlet groups, which can be seen as an analog of $(\T^{\infty}, \beta_{\T^{\infty}})$ from the ordinary case
(see Example \ref{examples2}). Further in Section \ref{section4} we define classes of frequencies $\lambda$ (natural and integer type) for which $\T^{\infty}$ is a $\lambda$-Dirichlet group (including $\lambda=(\log n)$).

\medskip
Let $(G,\beta)$ be a $\lambda$-Dirichlet group and $1\le p \le \infty$. Then
$E =\{ h_{\lambda_n} \colon n \in \mathbb{N}\} \subset \widehat{G}$, and we define
\[
H_{p}^{\lambda}\left(G\right):= H_p^E(G).
\]
So $H_{p}^{\lambda}\left(G\right)$ is  the Banach space of all functions $f \in L_p(G)$ such that $\widehat{f}(\gamma)\neq 0$ only if $\gamma = h_{\lambda_n}$ for some $n$.  Note that, the definition of $H_{p}^{\lambda}(G)$ also depends on the choice of $\beta$, although our notation does not indicate this.

\begin{Theo}\label{ident2} Let $1\le p \le \infty$ and $(G,\beta)$  a $\lambda$-Dirichlet group. Then
\[
 H_{p}^{\lambda}(G) =  H_{p}^{\lambda}(\overline{\R})\,\,\, \text{ isometrically}.
\]
More precisely,
there is an onto isometry $ \Psi \colon H_{p}^{\lambda}(G) \to H_{p}^{\lambda}(\overline{\R}),  f \mapsto g$ such that $\widehat{f}(h^{(G,\beta)}_{\lambda_{n}})=\widehat{g}(h^{(\overline{\R}, \beta_{\overline{\R}})}_{\lambda_{n}})$ for all $n \in \N$.
\end{Theo}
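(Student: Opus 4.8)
The plan is to realise the desired isometry as a special instance of the transfer principle from Proposition \ref{dualmapping1}, applied to the canonical projection furnished by Proposition \ref{biggestDirichletgroup}. First I would invoke Proposition \ref{biggestDirichletgroup} to obtain the unique continuous homomorphism $\pi_G \colon \overline{\R} \to G$ with dense range satisfying $\beta = \pi_G \circ \beta_{\overline{\R}}$. Since $\pi_G$ has dense range, Proposition \ref{dualmapping1} (with the roles $G \rightsquigarrow \overline{\R}$, $H \rightsquigarrow G$, $\phi \rightsquigarrow \pi_G$) applies and, for every $E \subset \widehat{G}$, produces an onto isometry $\Psi_p \colon H_p^E(G) \to H_p^{\widehat{\pi_G}(E)}(\overline{\R})$ intertwining the Fourier transforms via $\widehat{f} = \widehat{\Psi_p(f)} \circ \widehat{\pi_G}$.

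The decisive step is to identify the frequency support on the $\overline{\R}$-side, i.e.\ to compute $\widehat{\pi_G}(E)$ for $E = \{h_{\lambda_n}^{(G,\beta)} \colon n \in \N\}$. By definition $\widehat{\pi_G}(h_{\lambda_n}^{(G,\beta)}) = h_{\lambda_n}^{(G,\beta)} \circ \pi_G$ is a character on $\overline{\R}$, and composing with $\beta_{\overline{\R}}$ gives
\[
\big(h_{\lambda_n}^{(G,\beta)} \circ \pi_G\big) \circ \beta_{\overline{\R}} = h_{\lambda_n}^{(G,\beta)} \circ \beta = e^{-i\lambda_n \cdot}\,.
\]
By the uniqueness of the extension in Proposition \ref{hx}, this forces $\widehat{\pi_G}(h_{\lambda_n}^{(G,\beta)}) = h_{\lambda_n}^{(\overline{\R}, \beta_{\overline{\R}})}$, so that $\widehat{\pi_G}(E) = \{h_{\lambda_n}^{(\overline{\R},\beta_{\overline{\R}})} \colon n \in \N\}$ is precisely the index set defining $H_p^\lambda(\overline{\R})$.

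With this identification the proof closes: Proposition \ref{dualmapping1} yields the onto isometry $\Psi := \Psi_p \colon H_p^\lambda(G) \to H_p^\lambda(\overline{\R})$, and evaluating the intertwining relation $\widehat{f} = \widehat{\Psi_p(f)} \circ \widehat{\pi_G}$ at $h_{\lambda_n}^{(G,\beta)}$ gives, with $g = \Psi(f)$,
\[
\widehat{f}(h_{\lambda_n}^{(G,\beta)}) = \widehat{g}\big(\widehat{\pi_G}(h_{\lambda_n}^{(G,\beta)})\big) = \widehat{g}(h_{\lambda_n}^{(\overline{\R},\beta_{\overline{\R}})})\,,
\]
which is the asserted coefficient preservation. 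I do not expect a genuine obstacle here: the substantial work has already been carried out in Propositions \ref{biggestDirichletgroup} and \ref{dualmapping1}, and what remains is the bookkeeping of matching the two families of extended characters through $\pi_G$. The only point demanding care is keeping track of which group plays the role of domain versus codomain in Proposition \ref{dualmapping1}, and confirming that the abstract support $\widehat{\pi_G}(E)$ coincides exactly with the concrete frequency data on $\overline{\R}$ rather than with some larger subset of $\widehat{\overline{\R}}$.
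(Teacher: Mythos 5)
Your proposal is correct and follows exactly the route the paper takes: the paper's proof consists of the single sentence that the result is immediate from Proposition~\ref{biggestDirichletgroup} and Proposition~\ref{dualmapping1}, and your argument simply fills in the details, including the (correct) verification via uniqueness of extensions that $\widehat{\pi_G}$ carries $h_{\lambda_n}^{(G,\beta)}$ to $h_{\lambda_n}^{(\overline{\R},\beta_{\overline{\R}})}$.
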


\begin{proof} The result is immediate from Proposition~\ref{biggestDirichletgroup} and Proposition~\ref{dualmapping1}.
\end{proof}

\begin{Coro}  \label{ident3} Let $\lambda$ be a  frequency  and $(G,\beta_{G})$ and $(H,\beta_{H})$  two $\lambda$-Dirichlet groups.
Then   $$H_{p}^{\lambda}(G)=H_{p}^{\lambda}(H)\,\,\, \text{ isometrically},$$ where, if $f \in H_{p}^{\lambda}(G)$ and $g \in H_{p}^{\lambda}(H)$
are associated to each other, we have that  $\widehat{f}(h_{\lambda_{n}}^{(G,\beta_{G})})=\widehat{g}(h_{\lambda_{n}}^{(H,\beta_{H})})$ for all $n \in \N$.
\end{Coro}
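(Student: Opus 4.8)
The plan is to route both Hardy spaces through the Bohr compactification $\overline{\R}$, which by Theorem~\ref{ident2} serves as a universal model for $\lambda$-Dirichlet groups. The entire substance of the corollary has already been absorbed into that theorem (which in turn rests on Propositions~\ref{biggestDirichletgroup} and~\ref{dualmapping1}); what remains is to observe that the identifications there are compatible enough to be composed transitively.

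Concretely, I would first apply Theorem~\ref{ident2} to the $\lambda$-Dirichlet group $(G,\beta_{G})$, producing an onto isometry
\[
\Psi_{G} \colon H_{p}^{\lambda}(G) \to H_{p}^{\lambda}(\overline{\R})
\]
that preserves the relevant Fourier coefficients, i.e.\ $\widehat{f}(h_{\lambda_{n}}^{(G,\beta_{G})})=\widehat{\Psi_{G}(f)}(h_{\lambda_{n}}^{(\overline{\R},\beta_{\overline{\R}})})$ for every $n\in\N$. Applying the same theorem to $(H,\beta_{H})$ yields a second onto isometry $\Psi_{H}\colon H_{p}^{\lambda}(H)\to H_{p}^{\lambda}(\overline{\R})$ with the analogous coefficient-matching property. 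Since $\Psi_{H}$ is an onto isometry, its inverse $\Psi_{H}^{-1}$ exists and is again an onto isometry.

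Then I would simply set $\Psi:=\Psi_{H}^{-1}\circ\Psi_{G}\colon H_{p}^{\lambda}(G)\to H_{p}^{\lambda}(H)$; as a composition of onto isometries it is itself an onto isometry. To verify the coefficient identity, suppose $g=\Psi(f)$. Then $\Psi_{G}(f)=\Psi_{H}(g)$ as elements of $H_{p}^{\lambda}(\overline{\R})$, so their Fourier coefficients at each character $h_{\lambda_{n}}^{(\overline{\R},\beta_{\overline{\R}})}$ coincide; combining this with the two coefficient-preserving identities above gives $\widehat{f}(h_{\lambda_{n}}^{(G,\beta_{G})})=\widehat{g}(h_{\lambda_{n}}^{(H,\beta_{H})})$ for all $n$, as required.

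I do not expect any genuine obstacle here, since the work has been front-loaded into Theorem~\ref{ident2}: the only conceptual point is that $\overline{\R}$ is a single fixed target independent of $G$ and $H$, so both isometries land in the \emph{same} space and can be composed. If one wanted to avoid explicitly invoking $\overline{\R}$, an alternative would be to apply Proposition~\ref{dualmapping1} directly to a comparison map between $G$ and $H$, but no such canonical map exists in general, whereas Proposition~\ref{biggestDirichletgroup} guarantees the canonical projections $\pi_{G}$ and $\pi_{H}$ from $\overline{\R}$; this is precisely why factoring through $\overline{\R}$ is the natural route.
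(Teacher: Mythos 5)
Your proposal is correct and is exactly the argument the paper intends: Corollary~\ref{ident3} is stated as an immediate consequence of Theorem~\ref{ident2}, obtained by applying that theorem to both $(G,\beta_G)$ and $(H,\beta_H)$ and composing $\Psi_H^{-1}\circ\Psi_G$, with the coefficient identity following by transitivity through $H_p^\lambda(\overline{\R})$. Nothing further is needed.
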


In the next result we show that every $f \in H_{\infty}^{\lambda}(G)$  in a natural way defines  holomorphic functions on the open right half plane; in Section \ref{fourier} we take advantages of this important feature.
\begin{Prop} \label{holomorphy} Let $\lambda$ be an arbitrary frequency and $f \in H_{\infty}^{\lambda}(G)$. Then for almost all $\omega \in G$ the function
$$
F_\omega\colon [Re>0] \to \C\,,\,\,\,F_\omega(u+it):=(f_{\omega}*P_{u})(t)
$$
defines a holomorphic function on $[Re>0]$ which is bounded by $\|f\|_{\infty}$.
\end{Prop}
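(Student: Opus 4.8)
The plan is to prove the two assertions separately, reducing holomorphy to a vanishing statement for Fourier coefficients on $G$. The boundedness is immediate from Lemma~\ref{restrictiontoreals}: for almost every $\omega\in G$ the restriction $f_\omega$ lies in $L_\infty(\R)$ with $\|f_\omega\|_\infty\le\|f\|_\infty$, and the convolution $f_\omega*P_u$ is defined for every $u>0$ (note $P_u\in L_1(\R)$). Since $\|P_u\|_{L_1(\R)}=1$, this yields $|F_\omega(u+it)|=|(f_\omega*P_u)(t)|\le\|f_\omega\|_\infty\le\|f\|_\infty$ on all of $[\re>0]$.

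For holomorphy, I first note that writing $F_\omega(u+it)=\int_\R f_\omega(\tau)P_u(t-\tau)\,d\tau$ and differentiating under the integral (the $(u,t)$-partials of $(u,t,\tau)\mapsto P_u(t-\tau)$ are dominated, locally uniformly on compacta of $[\re>0]$, by an $L_1(d\tau)$-function) shows that $F_\omega\in C^\infty([\re>0])$ and that, for each fixed $u>0$,
\[
2\,\partial_{\bar s}F_\omega(u+it)=(\partial_u+i\partial_t)F_\omega(u+it)=(f_\omega*k_u)(t),\qquad k_u:=\partial_uP_u+i\,P_u' ,
\]
where $P_u'$ is the derivative of $P_u$ in its real argument. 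Thus $F_\omega$ is holomorphic as soon as $f_\omega*k_u=0$ for every $u$.

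The key step transfers this to the group. A direct computation gives $\widehat{P_u}(\xi)=e^{-u|\xi|}$, hence $\widehat{k_u}(\xi)=(-|\xi|-\xi)e^{-u|\xi|}$, which vanishes for all $\xi\le0$; this is exactly the half-line carrying the spectrum of the $f_\omega$, and it is here that $\lambda_n\ge0$ enters. Lifting $k_u$ to the measure $\kappa_u:=\Phi(k_u\,d\lambda)\in M(G)$ by Lemma~\ref{measuresembedd}, the defining relation $\widehat{\kappa_u}=\widehat{k_u\,d\lambda}\circ\widehat\beta$ together with $\widehat\beta(h_{\lambda_n})=\lambda_n$ gives $\widehat{\kappa_u}(h_{\lambda_n})=\int_\R k_u(t)e^{i\lambda_n t}\,dt=\widehat{k_u}(-\lambda_n)=0$ for every $n$, since $-\lambda_n\le0$. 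Arguing exactly as for the Poisson measure $p_u$ after Lemma~\ref{measuresembedd}, one has $(f*\kappa_u)(\omega\beta(t))=(f_\omega*k_u)(t)$ for almost every $(\omega,t)$. Now $f*\kappa_u\in L_2(G)$ has Fourier transform $\widehat f\,\widehat{\kappa_u}$, which is supported in $\{h_{\lambda_n}\}$ and vanishes there; therefore $f*\kappa_u=0$ almost everywhere on $G$.

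Finally I transfer back: as in the proof of Lemma~\ref{restrictiontoreals}, the null set $\{f*\kappa_u\neq0\}\subset G$ pulls back, under $(t,\omega)\mapsto\omega\beta(t)$ and by translation invariance of the Haar measure, to a null set of $\R\times G$; so for each fixed $u$ one gets $f_\omega*k_u=0$ a.e.\ on $\R$ for almost every $\omega$. Intersecting the resulting full-measure sets of $\omega$ over a countable dense set of $u\in(0,\infty)$ and invoking continuity of $\partial_{\bar s}F_\omega$ in $(u,t)$ upgrades this to $\partial_{\bar s}F_\omega\equiv0$ on $[\re>0]$, so $F_\omega$ is holomorphic. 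The main obstacle is precisely this key step: realising that the Cauchy--Riemann combination $k_u$ of the Poisson kernel has Fourier transform vanishing on the spectrum $\{-\lambda_n\}$, which turns fibrewise holomorphy into the vanishing of the $G$-Fourier coefficients of $f*\kappa_u$; the rest is the measure-theoretic bookkeeping needed to pass from ``for each $u$, a.e.\ $\omega$'' to ``a.e.\ $\omega$, for all $u$''.
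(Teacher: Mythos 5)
Your proof is correct, but it takes a genuinely different route from the paper's. The paper proves holomorphy by producing an explicit Laplace--transform representation: via Abel summation and the $L_1$- and $L_2$-Fourier transforms on $\R$ it shows that $F_\omega(s)=s\int_0^\infty\bigl(\sum_{\lambda_n<x}a_n\omega(\lambda_n)\bigr)e^{-sx}\,dx$ for almost all $\omega$, first for the partial sums $f^N=\sum_{n\le N}a_nh_{\lambda_n}$ and then for $f$ by $L_2(G)$-approximation along a subsequence; holomorphy is then read off from the Laplace integral. You instead verify the Cauchy--Riemann equations directly: $2\partial_{\bar s}F_\omega=f_\omega*k_u$ with $k_u=\partial_uP_u+iP_u'$, and the one-sided support of $\widehat f$ forces $\widehat f\cdot\widehat{\kappa_u}=0$ on all of $\widehat G$, hence $f*\kappa_u=0$ and, after the same ``fix $u$, get a.e.\ $\omega$, then intersect over a countable dense set of $u$'' bookkeeping that the paper also performs, $\partial_{\bar s}F_\omega\equiv0$. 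Your computation $\widehat{\kappa_u}(h_{\lambda_n})=\widehat{k_u}(-\lambda_n)=0$ is exactly where $\lambda_n\ge0$ enters, just as positivity of the frequencies enters the paper's argument through the support $[0,\infty)$ of the sum function $x\mapsto\sum_{\lambda_n<x}a_n\omega(\lambda_n)$. Your route is arguably more conceptual (holomorphy of a Poisson extension equals one-sidedness of the spectrum) and avoids the $L_2(\R)$ Fourier inversion; what the paper's route buys in exchange is the explicit integral formula for $F_\omega$, which is recycled verbatim in the subsequent lemma identifying $F_\omega$ with the vertical limit $D^\omega$ and hence in Theorem~\ref{helson}. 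The boundedness part is the same in both arguments.
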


\begin{proof}
By Lemma \ref{restrictiontoreals} we know that $f_{\omega} \in L_{\infty}(\R)$ for almost all $\omega \in G$ (say, for all $\omega$ not contained in a null set $M\subset G$). So $F_{\omega}$ defines a continuous function on $[Re>0]$ (actually a harmonic function on $[Re>0]$, see \cite{helsonharmonic}). For polynomials $g= \sum_{n=1}^{N} b_{n} e^{-i\lambda_{n}\cdot}$ we have that
for all $x \in \R$ and $u>0$ by Abel summation
\begin{equation*}
\mathcal{F}_{L_{1}(\R)}\left(e^{-u|\cdot|}\sum_{\lambda_{n}<\cdot} b_{n}\right)(x)=\frac{1}{u+ix} \sum_{n=1}^{N} b_{n}e^{-(u+ix)\lambda_{n}}=\frac{(g*P_{u})(x)}{u+ix}  \,.
\end{equation*}
Since the right hand side (as a function of $x$) is in $L_{2}(\R)$, we apply the Fourier transform on $L_{2}(\R)$ and obtain
\begin{equation}\label{fourier1}
\mathcal{F}_{L_{2}(\R)} \left(\frac{g*P_{u}}{u+i\cdot}\right)(-\cdot)=e^{-u|\cdot|} \sum_{\lambda_{n}<\cdot} b_{n}\,\,\, \text{in $L_{2}(\R)$}.
\end{equation}
 Now we want to apply this formula for $f_{\omega}$. Therefore fix $u>0$ and consider the mapping
\[
\Psi_u: L_2(G)  \to L_2(G, L_2(\mathbb{R})), \,\, g \mapsto \Big[\omega \mapsto \frac{g_\omega*P_{u}}{u+i \cdot}\Big]\,.
\]
This operator is well-defined and bounded. Since the sequence  $f^{N}:=\sum_{n=1}^{N} a_{n} h_{\lambda_{n}}$ converges to $f$ in $L_{2}(G)$, we obtain some  subsequence $(N_{k})_{k}$  such that for almost all $\omega \in G$, say $\omega \notin N_{u} \cup M$, where $N_{u}\subset G$ is a null set,
\begin{equation*} \label{fourier4}
\lim_{k\to \infty}\frac{f^{N_{k}}_{\omega}*P_{u}}{u+i\cdot}=\frac{f_{\omega}*P_{u}}{u+i\cdot}
\,\,\, \text{in $L_{2}(\R)$}\,.
\end{equation*}
Then by the continuity of the Fourier transform on $L_{2}(\R)$ and \eqref{fourier1} we obtain
for all $\omega \notin N_{u} \cup M$
\begin{equation} \label{fourier5}
\mathcal{F}_{L_{2}(\R)}\left(\frac{f_{\omega}*P_{u}}{u+i\cdot}\right)(-\cdot)=e^{-u|\cdot|}\sum_{\lambda_{n}<\cdot} a_{n} \omega(\lambda_{n}) \in L_{2}(\R)\,.
\end{equation}
In particular for all $u>0$ and $\omega \notin N_{u} \cup M$
\begin{align*}
&\left\|e^{-2u|\cdot|}\sum_{\lambda_{n}<\cdot} a_{n} \omega(\lambda_{n})\right\|_{L_{1}(\R)}=\int_{0}^{\infty} e^{-2ux} \left|\sum_{\lambda_{n}<x} a_{n} \omega(\lambda_{n})\right| dx \\ &\le \left(\int_{0}^{\infty} \left|\left(\sum_{\lambda_{n}<x} a_{n} \omega(\lambda_{n}) \right) e^{-ux}\right|^{2} dx\right)^{\frac{1}{2}} \left(\int_{-\infty}^{\infty} e^{-2u|x|} dx \right)^{\frac{1}{2}}< \infty,
\end{align*}
hence $e^{-u|\cdot|}\sum_{\lambda_{n}<\cdot} a_{n} \omega(\lambda_{n})\in L_{1}(\R)$ for all $\omega \notin N_{u} \cup M$. Now we write $\mathbb{Q}_{+}:=\mathbb{Q}\cap ]0,\infty[=\{q_{1}, q_{2}, \ldots\}=(q_{n})_{n}$ and choosing $u_{n}=q_{n}$ we obtain a null set $N:=\bigcup_{n\in \N} N_{q_{n}}\subset G$ such that $e^{-u|\cdot|}\sum_{\lambda_{n}<\cdot} a_{n} \omega(\lambda_{n})\in L_{1}(\R)$ for all $u\in \mathbb{Q}_{+}$ and  $\omega \notin N\cup M$. So for all $\omega \notin N\cup M$ the function
\[
g_\omega: [\text{Re} >0] \to \mathbb{C}\,, \,\, g_\omega(z) =
\int_{0}^{\infty} \left(\sum_{\lambda_{n}<x} a_{n} \omega(\lambda_{n}) \right)e^{-zx} dx
\]
is well-defined and holomorphic. Now together with (\ref{fourier5}) we obtain for all $u\in \mathbb{Q}_{+}$ and  $\omega \notin N\cup M$
\begin{align*}
&\int_{0}^{\infty} \Big(\sum_{\lambda_{n}<x} a_{n} \omega(\lambda_{n}) \Big)e^{-(u+i\cdot)x} dx=\mathcal{F}_{L_{1}(\R)}\left(e^{-u|\cdot|}\sum_{\lambda_{n}<\cdot} a_{n} \omega(\lambda_{n})\right)\\ &=\mathcal{F}_{L_{2}(\R)}\left(e^{-u|\cdot|}\sum_{\lambda_{n}<\cdot} a_{n} \omega(\lambda_{n})\right)=\mathcal{F}_{L_{2}(\R)}\left(\mathcal{F}_{L_{2}(\R)} \left( \frac{f_{\omega}*P_{u}}{u+i\cdot} \right)(-\cdot)\right)=\frac{f_{\omega}*P_{u}}{u+i\cdot}.
\end{align*}
 So finally for all $u\in \mathbb{Q}_{+}$  and for all $\omega \notin N\cup M$ and almost all $t$
\begin{equation} \label{fourier3}
\frac{F_{\omega}(u+it)}{u+it}=\int_{0}^{\infty} \Big(\sum_{\lambda_{n}<x} a_{n} \omega(\lambda_{n}) \Big)e^{-(u+it)x} dx=g_\omega(u+it)\,.
\end{equation}
Since both sides are continuous functions, equation (\ref{fourier3}) holds on $[Re>0]$. So since \[
F_\omega(z) = z g_\omega(z)\,\, \text{on $[\text{Re}>0]$}\,,
\]
 $F_\omega$ is indeed holomorphic on the open right half plane for all $\omega \notin N\cup M$.
\end{proof}

\subsection{Hardy spaces  of general Dirichlet series} \label{hardyspace}
We  now come to the actual goal of this work.
Let $1\le p \le \infty$ and $(G, \beta)$ be a $\lambda$-Dirichlet group, and consider the following map which we call Bohr map:
\begin{equation*}
\Bcal \colon H^{\lambda}_{p}(G) \hookrightarrow \mathcal{D}(\lambda),
\,\,\, f \sim \sum_{\gamma \in \widehat{G}} \widehat{f}(\gamma) \gamma\,\,\, \mapsto\,\,\, \sum_{n \in \N} \widehat{f}(h_{\lambda_{n}}) e^{-\lambda_{n}s}.
\end{equation*}
The following definition is at the very heart of this work.
\begin{Defi} \label{zugcrash}
The Hardy space $\Hcal_{p}(\lambda)$ of $\lambda$-Dirichlet series
is the  space
of all  $\sum a_n e^{-\lambda_n s}$ for which there is some
$f \in H_p^\lambda(G)$ such that   $a_n = \widehat{f}(h_{\lambda_{n}})$ for all $n$.
\end{Defi}

Together with
the norm $\|D\|_{p}:=\|f\|_{p}$ the space $\Hcal_{p}(\lambda)$ clearly forms a Banach space, and then by definition the Bohr map $\Bcal$ gives an isometric onto isomorphism
from $\Hcal_{p}(\lambda)$ onto $H_{p}^{\lambda}(G)$.
The following immediate consequence of Corollary \ref{ident3}  is crucial.

\begin{Theo}
$\Hcal_{p}(\lambda)$  is independent of the chosen $\lambda$-Dirichlet group $(G, \beta)$.
\end{Theo}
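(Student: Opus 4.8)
The plan is to reduce the statement entirely to Corollary~\ref{ident3}, which for any two $\lambda$-Dirichlet groups $(G,\beta_G)$ and $(H,\beta_H)$ already supplies an isometric identification $H_p^\lambda(G)=H_p^\lambda(H)$ that moreover matches up the Fourier coefficients at the characters $h_{\lambda_n}$. First I would make the group-dependence of Definition~\ref{zugcrash} explicit, writing $\Hcal_p^{(G,\beta_G)}(\lambda)$ and $\Hcal_p^{(H,\beta_H)}(\lambda)$ for the two resulting spaces of formal $\lambda$-Dirichlet series, each with its own norm. The goal is then to show that these two subspaces of $\mathcal{D}(\lambda)$ coincide and carry the same norm.

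For the equality of the underlying sets, I would take $D=\sum a_n e^{-\lambda_n s}\in \Hcal_p^{(G,\beta_G)}(\lambda)$, so that by definition there is $f\in H_p^\lambda(G)$ with $a_n=\widehat{f}(h_{\lambda_n}^{(G,\beta_G)})$ for all $n$. Let $g\in H_p^\lambda(H)$ be the element associated to $f$ under the isometry of Corollary~\ref{ident3}. Since that identification satisfies $\widehat{f}(h_{\lambda_n}^{(G,\beta_G)})=\widehat{g}(h_{\lambda_n}^{(H,\beta_H)})$ for every $n$, we obtain $a_n=\widehat{g}(h_{\lambda_n}^{(H,\beta_H)})$, and hence $D\in \Hcal_p^{(H,\beta_H)}(\lambda)$. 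Interchanging the roles of $G$ and $H$ (using that the identification is onto) gives the reverse inclusion, so the two sets of Dirichlet series agree.

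For the norms, recall from the remark preceding the statement that the Bohr map is an isometric isomorphism $\Bcal\colon \Hcal_p(\lambda)\to H_p^\lambda(G)$; in particular the representing function $f$ is unique --- its Fourier transform is supported on $\{h_{\lambda_n}\}$, so all of its nonzero Fourier coefficients already appear among the prescribed $a_n$ --- and the norm $\|D\|_p=\|f\|_p$ is well-defined. With the same $f$ and its associate $g$, the norm of $D$ computed in $\Hcal_p^{(H,\beta_H)}(\lambda)$ is $\|g\|_p$, and because the identification of Corollary~\ref{ident3} is isometric we have $\|g\|_p=\|f\|_p$. Thus $\|D\|_p$ does not depend on the chosen group, and combining this with the previous paragraph gives $\Hcal_p^{(G,\beta_G)}(\lambda)=\Hcal_p^{(H,\beta_H)}(\lambda)$ isometrically.

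I do not anticipate a genuine obstacle: all the substantive work --- the isometric matching of Hardy spaces over different $\lambda$-Dirichlet groups together with the preservation of the coefficients at $h_{\lambda_n}$ --- is already carried out in Corollary~\ref{ident3} (resting in turn on Proposition~\ref{biggestDirichletgroup} and Proposition~\ref{dualmapping1}). The one point deserving a line of care is the well-definedness of the norm, i.e.\ the uniqueness of the representing function $f$, which is exactly the injectivity of the Bohr map recorded just before the statement.
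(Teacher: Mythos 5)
Your argument is correct and is exactly the paper's route: the theorem is stated there as an immediate consequence of Corollary~\ref{ident3}, and your proposal simply spells out the set equality and the isometry that this corollary delivers, together with the (correct) observation that the Bohr map's injectivity makes the norm well-defined. Nothing further is needed.
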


Recall that in the ordinary case $\lambda:=(\log n)$, the groups $\T^{\infty}$ and $\overline{\R}$ are suitable $(\log n)$-Dirichlet groups, which immediately leads to the following consequence.
\begin{Coro} \label{oorrddii}
$$H_{p}^{\log(n)}(\overline{\R})=\mathcal{H}_{p}(\log(n))=H_{p}(\T^{\infty}).$$
\end{Coro}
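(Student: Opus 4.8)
The plan is to deduce the corollary almost entirely from the machinery already assembled, isolating the one genuinely classical input, namely the Bohr correspondence on $\T^\infty$ via the fundamental theorem of arithmetic. First I would record that for the frequency $\lambda = (\log n)$ both $(\overline{\R}, \beta_{\overline{\R}})$ and $(\T^\infty, \beta_{\T^\infty})$ are $\lambda$-Dirichlet groups: the former because $(\overline{\R}, \beta_{\overline{\R}})$ is a $\lambda$-Dirichlet group for \emph{every} frequency, and the latter by Kronecker's theorem as recorded in Example~\ref{examples2} with $b_n = \log p_n$. Corollary~\ref{ident3} then gives at once the isometric identity $H_p^{\log(n)}(\overline{\R}) = H_p^{\log(n)}(\T^\infty)$ with matching coefficients, while the Bohr map of Definition~\ref{zugcrash} together with the remark following it furnishes the isometric isomorphism $\Hcal_p(\log(n)) = H_p^{\log(n)}(G)$ for any $(\log n)$-Dirichlet group $G$; in particular $\Hcal_p(\log(n)) = H_p^{\log(n)}(\overline{\R})$. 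These three identifications are pure bookkeeping on top of the general theory.

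What remains, and what carries the actual content, is the equality $H_p^{\log(n)}(\T^\infty) = H_p(\T^\infty)$, where the right-hand side is the classical Hardy space of all $f \in L_p(\T^\infty)$ with $\widehat{f}(\alpha) = 0$ whenever $\alpha < 0$ (some coordinate negative). Both spaces are of the form $H_p^E(\T^\infty)$, so it suffices to match the two index sets. I would compute the character $h_{\log n}$ on $\T^\infty$ explicitly: since $\beta_{\T^\infty}(t) = (p_k^{-it})_k$ and $h_{\log n} \circ \beta_{\T^\infty} = e^{-i(\log n)\cdot} = n^{-i\cdot}$, writing $n = \prod_k p_k^{\alpha_k}$ yields $n^{-it} = \prod_k (p_k^{-it})^{\alpha_k}$, whence $h_{\log n}$ is exactly the monomial character $z \mapsto z^\alpha$ on $\T^\infty$.

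The decisive observation is then the fundamental theorem of arithmetic: $n \mapsto \alpha$, where $n = p^\alpha$, is a bijection from $\N$ onto the finite multi-indices $\alpha \geq 0$, and under it $h_{\log n}$ corresponds to $z^\alpha$. Hence the support set $E = \{h_{\log n} \colon n \in \N\}$ is precisely the set of monomials $z^\alpha$ with $\alpha \geq 0$, which is exactly the condition defining $H_p(\T^\infty)$; this is the identification already anticipated in \eqref{vision}. Therefore $H_p^{\log(n)}(\T^\infty) = H_p(\T^\infty)$ isometrically with $\widehat{f}(h_{\log n})$ matching the classical Fourier coefficient $\widehat{f}(\alpha)$, and concatenating with the previous paragraph closes the chain of equalities. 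I do not expect any serious obstacle: the only nonformal step is the arithmetic reindexing, which is the classical Bohr lift and is routine once the characters $h_{\log n}$ are written out as monomials.
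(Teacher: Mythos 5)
Your proposal is correct and follows essentially the same route as the paper: the paper derives the corollary immediately from the fact that both $(\overline{\R},\beta_{\overline{\R}})$ and $(\T^{\infty},\beta_{\T^{\infty}})$ are $(\log n)$-Dirichlet groups together with Corollary~\ref{ident3} and the definition of $\Hcal_p(\lambda)$ via the Bohr map. The only difference is that you spell out the identification $h_{\log n}=z^{\alpha}$ for $n=\mathfrak{p}^{\alpha}$ and the resulting equality of support sets, which the paper leaves implicit as the classical Bohr correspondence of~\eqref{vision}; this added detail is accurate and harmless.
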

So the above definition of $\Hcal_{p}(\lambda)$  actually coincides with Bayart's definition of $\Hcal_{p}$ in the ordinary case (see \cite{Bayart}).

\medskip

 Dealing with concrete problems in $\mathcal{H}_{p}(\lambda)$ it suffices to restrict ourselves thinking on $\overline{\R}$ (or $\widehat{\mathbb{Q}_d}^{\infty}$, see Section \ref{severalvariables}) instead of considering an 'abstract' Dirichlet group $G$. The choice of the group may depend then on the problem we are interested in. Of course considering particular frequencies like $\lambda=(n)$ or $\lambda=(\log n)$ it has advantages to work with the torus instead of $\overline{\R}$ due to its connection to complex analysis (on products of the unit disk $\D$).

\medskip

Note that now there are two  `$H_\infty$-spaces of $\lambda$-Dirichlet series' around, namely $\mathcal{D}_{\infty}(\lambda)$ and $\Hcal_{\infty}(\lambda)$. In Section \ref{fourier} we show that they coincide if $\mathcal{D}^{ext}_{\infty}(\lambda)=\mathcal{D}_{\infty}(\lambda)$ and $L(\lambda)<\infty$ (Theorem~\ref{mainresultinfty}),  which answers the question posed in the introduction.
\medskip

We close this section by giving an internal description of $\Hcal_{p}(\lambda)$ through $\lambda$-Dirichlet polynomials without considering Dirichlet groups. We denote by $Pol(\lambda)$ the space of all $\lambda$-Dirichlet polynomials $D=\sum_{n=1}^{N} a_{n} e^{-\lambda_{n}s}$. For such polynomials we define
\begin{equation*} \label{limitHp}
\|D\|_{p}:=\left(\lim_{T \to \infty} \frac{1}{2T} \int_{-T}^{T}\left|\sum^{N}_{n=1} a_{n}e^{-\lambda_{n}it} \right|^{p} dt\right)^{\frac{1}{p}}\,,
\end{equation*}
where due to  Proposition \ref{basic}  this limit  exists and gives a norm on  $Pol(\lambda)$. Then the following result is  an immediate consequence of
Proposition~\ref{densitytrigonometric}.
\begin{Theo} \label{comp_pol}Let $1\le p < \infty$ and $\lambda$ be a frequency. Then the space $\Hcal_{p}(\lambda)$ is the completion of $(Pol(\lambda), \|\cdot\|_{p})$.
\end{Theo}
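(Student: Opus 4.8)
The plan is to realize $(Pol(\lambda), \|\cdot\|_{p})$ isometrically as a dense subspace of $\Hcal_{p}(\lambda)$ by means of the Bohr map, and then to invoke the completeness of $\Hcal_{p}(\lambda)$. First I would fix any $\lambda$-Dirichlet group $(G,\beta)$ --- for instance $(\overline{\R}, \beta_{\overline{\R}})$, which is a $\lambda$-Dirichlet group for every frequency $\lambda$ --- and set $E := \{h_{\lambda_{n}} \colon n \in \N\} \subset \widehat{G}$, so that $H_{p}^{\lambda}(G) = H_{p}^{E}(G)$. Recall that, by the very definition of $\Hcal_{p}(\lambda)$ and its norm $\|D\|_{p} := \|f\|_{p}$, the Bohr map $\Bcal \colon H_{p}^{\lambda}(G) \to \Hcal_{p}(\lambda)$ is an isometric onto isomorphism.

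The crucial point is that $\Bcal$ restricts to a norm-preserving bijection between $Pol_{E}(G)$ and $Pol(\lambda)$. Indeed, for $P = \sum_{n=1}^{N} a_{n} h_{\lambda_{n}} \in Pol_{E}(G)$ one has $\Bcal(P) = \sum_{n=1}^{N} a_{n} e^{-\lambda_{n}s} \in Pol(\lambda)$, and conversely every $\lambda$-Dirichlet polynomial arises in this way. That the two norms agree is exactly the content of Proposition~\ref{basic}: the limit defining $\|\Bcal(P)\|_{p}$ on $Pol(\lambda)$ equals $\int_{G} |P(\omega)|^{p}\, dm(\omega) = \|P\|_{L_{p}(G)}^{p}$. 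In particular this simultaneously confirms that $\|\cdot\|_{p}$ is a genuine norm on $Pol(\lambda)$ and that it does not depend on the chosen group.

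Now I would combine this identification with density. By Proposition~\ref{densitytrigonometric}, for $1 \le p < \infty$ the space $Pol_{E}(G)$ is dense in $H_{p}^{\lambda}(G) = H_{p}^{E}(G)$, and the latter is a closed subspace of $L_{p}(G)$, hence complete. Transporting along the isometric isomorphism $\Bcal$, it follows that $\Hcal_{p}(\lambda)$ is complete and that the subspace $Pol(\lambda) = \Bcal\big(Pol_{E}(G)\big)$ is dense in it. A complete normed space in which a prescribed subspace sits isometrically and densely is, by definition, the completion of that subspace, which gives the claim.

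I do not anticipate a serious obstacle, since every ingredient is already available: the one point that genuinely requires care is the identification of the limit-mean norm on $Pol(\lambda)$ with the $L_{p}(G)$-norm on the corresponding trigonometric polynomials, and this is precisely what Proposition~\ref{basic} delivers (together with the fact that the limit depends only on the coefficients and frequencies, not on $G$). Should one wish to bypass the choice of group altogether, an equivalent route is to take Proposition~\ref{basic} as defining an abstract isometry $Pol(\lambda) \to Pol_{E}(\overline{\R})$ and to run the density-plus-completeness argument entirely on $\overline{\R}$.
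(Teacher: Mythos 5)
Your proposal is correct and follows exactly the route the paper takes: the norm identification on polynomials via Proposition~\ref{basic}, the density of $Pol_{E}(G)$ in $H_{p}^{E}(G)$ for $1\le p<\infty$ from Proposition~\ref{densitytrigonometric}, and transport along the isometric Bohr map to the complete space $\Hcal_{p}(\lambda)$. The paper states the theorem as an immediate consequence of these same two propositions, so there is nothing to add.
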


\subsection{Several variables} \label{severalvariables}

As already mentioned in Corollary~\ref{oorrddii}, in the ordinary case $\lambda=(\log n)$ the $\Hcal_{p}$-spaces coincide with the Hardy space $H_{p}(\T^{\infty})$ which consists of functions in infinitely many variables. On the other hand by Theorem \ref{mainresultinfty} ordinary Dirichlet series can be understood in terms of the 'one dimensional' object  $\overline{\R}$. So the (vague) question might arise where are all the variables gone in the general case?

\medskip

Recall the following notions going back to  Bohr (see \cite[\S 5]{Bohr3}). An infinite matrix $R:=(r^{n}_{k})_{n,k \in \mathbb{N}}$ of rational numbers is called Bohr matrix
whenever each row  $R_n = (r_k^n)_k$ is finite, i.e. $r^{n}_{k} \neq 0$ for only finitely many $k$'s.
Given  a sequence $\lambda:=(\lambda_{n})$ of real numbers,  a sequence $B:=(b_{n})$ in $\R$ is said to be a basis for $\lambda$ if it is  $\mathbb{Q}$-linearly independent and for  each $n$ there is a finite sequence  $(r^{n}_{k})_{k}$ of  rational coefficients
 such that $\lambda_{n}=\sum_k r^{n}_{k} b_{k}$. In this case, $R:=(r^{n}_{k})_{n,k}$ is said to be a  Bohr matrix of $\lambda$ with respect to the basis $B$. If $\lambda$ is a frequency, such a basis always exists
 (in fact it  can be chosen as a subsequence of $\lambda$), and if $R$ is the associated  Bohr matrix $R$, we write  $\lambda=(R,B)$.

 \medskip
 Note that $(\log p_{j})$ is a basis of $\lambda=(\log n)$, where $\mathfrak{p}=(p_{j})$ is the sequence of prime numbers. In this case all finite sequences of natural numbers form the rows of
 $R$.

 \medskip

We come back to Example \ref{examples3} within the setting of $\lambda$-Dirichlet groups.

\begin{Exam}\label{examples8} \text{}
Let $B:=(b_{n})$  be a $\mathbb{Q}$-linearly independent  sequence of length
 $N \in \N \cup \{\infty\}$.
Then for every frequency $\lambda = (R, B)$ we have that $\big(\widehat{\mathbb{Q}_d}^{N}, \widehat{T_{B}}\big)$ is a $\lambda$-Dirichlet group, where $T_B$ is the mapping defined in \eqref{themap}.
\end{Exam}

On the other hand, if $R$ is a Bohr matrix and all rows $R_n$ have length $N \in \N \cup \{\infty\}$, then obviously each of these rows belongs to  the dual of $\widehat{\mathbb{Q}_d}^{N}$\,,
 and we denote for  $1\le p \le \infty$ the corresponding Hardy space (see Definition~\ref{defihardygroups}) by
 \[
 H^{R}_{p}\big(\widehat{\mathbb{Q}_d}^{N}\big)\,.
 \]
 Then Example~\ref{examples8} and Theorem \ref{ident2} show that in this case the three objects $H_{p}^{R}\big(\widehat{\mathbb{Q}_d}^{N}\big)$, $\Hcal_{p}(\lambda)$, and $H_{p}^{\lambda}(\overline{\R})$
 can not be distinguished in terms of Fourier- and Dirichlet coefficients.

  \begin{Theo} \label{summary} Let $1\le p\le \infty$ and $R$ be a Bohr matrix. Then for for every frequency  $\lambda=(R,B)$ with a  basis of length $N\in \N \cup \{\infty\}$
$$ H_{p}^{R}\big(\widehat{\mathbb{Q}_d}^{N}\big)=\Hcal_{p}(\lambda)=H_{p}^{\lambda}(\overline{\R})\,\,\, \text{isometrically}\,.$$
More precisely, there are unique onto isometries between these spaces such that if $F$, $D$, and $f$ are associated to each other, then
for each $n$
$$\widehat{F}(R_{n})=a_{n}(D)=\widehat{f}(\lambda_{n})\,.$$
\end{Theo}

Thinking of multi indices as rows in the Bohr matrix of $(\log n)$ the Hardy type space $H_{p}^{R}(\widehat{\mathbb{Q}_d}^{N})$ could be considered as the 'multi variable' substitute of $H_{p}(\T^{\infty})$ in the general case. The difference of $H_{p}^{R}(\widehat{\mathbb{Q}_d}^{N})$ and $H_{p}^{\lambda}(\overline{\R})$ lies in the kind of perspective: Either the focus lies on the frequency $\lambda$ itself or on its coefficients with respect to some decomposition $(R,B)$. Calculations are sometimes easier in $H_{p}^{\lambda}( \overline{\R})$ than in $H_{p}^{R}(\widehat{\mathbb{Q}_d}^{N})$, since we do not have to deal with 'infinitely many variables'. On the other hand the spaces $H_{p}^{R}(\widehat{\mathbb{Q}_d}^{N})$ decompose the frequency with respect to the basis ('separates the variables'), and they are independent of a chosen basis for $\lambda$. Moreover $\widehat{\mathbb{Q}_d}^{N}$ is metrizable (since its dual group is countable).

\medskip

An immediate consequence of Theorem \ref{summary} is the following (which may be is not immediate considering $H_{p}^{\lambda}(\overline{\R})$).
\begin{Coro} \label{samecoeff} Assume that  $1\le p \le \infty$ and that  $\lambda=(R,B)$ and $\widetilde{\lambda}=(\widetilde{R}, \widetilde{B})$ are two frequencies. If there is a permutation  $\phi \colon \N \to \N$ such that $\widetilde{R_{n}}=R_{\phi(n)}$ for all $n \in \N$, that is $\lambda$ and $\widetilde{\lambda}$ have the same Bohr matrix without regard to the order of the rows, then $\Hcal_{p}(\lambda)=\Hcal_{p}(\widetilde{\lambda})$ (as Banach spaces) and the coefficient are preserved with respect to $\phi$.
\end{Coro}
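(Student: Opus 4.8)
The plan is to read everything off Theorem~\ref{summary}, whose whole point is that the Hardy space $H_{p}^{R}(\widehat{\mathbb{Q}_d}^{N})$ attached to a decomposition $\lambda=(R,B)$ depends only on the pair $(R,N)$ and never on the actual basis values $B$. Indeed, the group $\widehat{\mathbb{Q}_d}^{N}=\prod_{n=1}^{N}\widehat{\mathbb{Q}_d}$ is determined by the length $N$ alone, and by Definition~\ref{defihardygroups} the space $H_{p}^{R}(\widehat{\mathbb{Q}_d}^{N})=H_{p}^{E}(\widehat{\mathbb{Q}_d}^{N})$ with $E=\{R_{n}\colon n\in\N\}$ is determined by $N$ together with the (unordered) set of rows $E$ sitting inside $\bigoplus_{n=1}^{N}\mathbb{Q}=(\widehat{\mathbb{Q}_d}^{N})^{\widehat{}}$. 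The basis $B$ enters only through the homomorphism $\widehat{T_{B}}$ used to realise $\lambda$, not through the definition of the Hardy space itself; this is the single observation that makes the corollary work.

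First I would apply Theorem~\ref{summary} twice, to $\lambda=(R,B)$ and to $\widetilde{\lambda}=(\widetilde{R},\widetilde{B})$, obtaining onto isometries
\[
\Hcal_{p}(\lambda)=H_{p}^{R}(\widehat{\mathbb{Q}_d}^{N}),\qquad \Hcal_{p}(\widetilde{\lambda})=H_{p}^{\widetilde{R}}(\widehat{\mathbb{Q}_d}^{\widetilde{N}}),
\]
under which, if $D$ corresponds to $F$ and $\widetilde{D}$ to $\widetilde{F}$, then $a_{n}(D)=\widehat{F}(R_{n})$ and $a_{n}(\widetilde{D})=\widehat{\widetilde{F}}(\widetilde{R}_{n})$ for all $n$. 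Since $\widetilde{R}_{n}=R_{\phi(n)}$ says that $\widetilde{R}$ arises from $R$ by permuting rows, the two matrices share the same rows, hence the same columns; taking the bases minimal (or, should one allow a basis padded with unused elements so that formally $N\neq\widetilde{N}$, noting that the extra coordinates factor out through the coordinate projection and Proposition~\ref{dualmapping1}) we may assume $N=\widetilde{N}$. Their row sets then agree, $\{\widetilde{R}_{n}\}=\{R_{n}\}=E$, so $H_{p}^{R}(\widehat{\mathbb{Q}_d}^{N})$ and $H_{p}^{\widetilde{R}}(\widehat{\mathbb{Q}_d}^{\widetilde{N}})$ are literally one and the same space $H_{p}^{E}(\widehat{\mathbb{Q}_d}^{N})$.

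Composing the two isometries through this common space yields the isometric identification $\Hcal_{p}(\lambda)=\Hcal_{p}(\widetilde{\lambda})$. To check that coefficients are preserved with respect to $\phi$, I would let $D\in\Hcal_{p}(\lambda)$ and $\widetilde{D}\in\Hcal_{p}(\widetilde{\lambda})$ correspond to the same $F\in H_{p}^{E}(\widehat{\mathbb{Q}_d}^{N})$; then
\[
a_{n}(\widetilde{D})=\widehat{F}(\widetilde{R}_{n})=\widehat{F}(R_{\phi(n)})=a_{\phi(n)}(D),
\]
which is exactly the asserted compatibility. As the statement is flagged an immediate consequence, there is no genuine obstacle: everything reduces to the basis-independence of $H_{p}^{R}(\widehat{\mathbb{Q}_d}^{N})$ recorded above, plus the harmless bookkeeping that a row permutation leaves both $N$ and the support set $E$ unchanged.
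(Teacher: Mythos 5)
Your proposal is correct and is exactly the argument the paper intends: the corollary is stated as an immediate consequence of Theorem~\ref{summary}, and your write-up simply makes explicit that $H_{p}^{R}\big(\widehat{\mathbb{Q}_d}^{N}\big)$ depends only on $N$ and the unordered set of rows, so permuting rows (and changing the basis values) leaves the Hardy space untouched while the coefficient identity $a_{n}(\widetilde{D})=\widehat{F}(R_{\phi(n)})=a_{\phi(n)}(D)$ drops out of the coefficient-preservation clause of Theorem~\ref{summary}. The parenthetical handling of possibly unequal basis lengths via Proposition~\ref{dualmapping1} is a reasonable way to close the one small gap the paper leaves implicit.
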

For instance choosing $\lambda:=(\log 2^{n})$ and $\widetilde{\lambda}=(\log 3^{n})$, then  Corollary \ref{samecoeff} implies $\mathcal{D}_{\infty}(\lambda)=H_{\infty}(\T)=\mathcal{D}_{\infty}(\widetilde{\lambda})$ (which of course can be deduced directly).

\smallskip

\subsection{Frequencies of  integer type} \label{section4}
In this section we consider frequencies which allow a decomposition $(R,B)$ such that the Bohr matrix $R$ consists exclusively of integers or natural numbers. We call a Bohr matrix of integers or natural numbers  full, if
any possible finite sequence of integers or  numbers in $\mathbb{N}_0$ appears as a row. This includes the Bohr matrix $R$ generated by  the ordinary case $\lambda=(\log n)$ and its basis $B=(\log p_n)$.
 If $\alpha$ is a row of $R$, then we shortly write $\alpha \in R$. For $1 \leq p \leq \infty$ we keep to the standard denoting by $\Hcal_{p}$ the Hardy space of ordinary Dirichlet series.
\begin{Defi} We call a frequency $\lambda$ of integer (resp. natural) type if there is a basis $B$ for $\lambda$ such that the Bohr matrix $R$ only consists of integer (resp. natural) numbers.
\end{Defi}
\noindent
So for these frequencies $\lambda = (R,B)$ of integer type the group $\T^{N}$  is a $\lambda$-Dirichlet group
(use Example~\ref{examples2}), where $N \in \N \cup \{\infty\}$ is the length of the basis $B$, and we obtain the following
\begin{Theo} \label{integertype} Let $1\le p \le \infty$ and  $\lambda=(R,B)$  a frequency of integer type with basis of length $N \in \N\cup\{\infty\}$. Then there is a unique onto isometry $$\psi \colon \Hcal_{p}(\lambda) \to H_{p}^{R}(\T^N),  ~~ D \mapsto F$$ such that $\hat{F}( \alpha)=a_{n}(D)$ for all multi indices $\alpha \in  R$ and $\lambda_{n}=\sum \alpha_{j} b_{j}$.
\end{Theo}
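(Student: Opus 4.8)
The plan is to realize $\psi$ as (the inverse of) the Bohr map attached to the concrete $\lambda$-Dirichlet group $\T^N$, after observing that over this particular group the two Hardy spaces $H_p^\lambda(\T^N)$ and $H_p^R(\T^N)$ are literally the \emph{same} subspace of $L_p(\T^N)$. The first step is to record that $(\T^N,\beta_{\T^N})$ is a $\lambda$-Dirichlet group. By Example~\ref{examples2} the $\Z$-linear (equivalently $\mathbb{Q}$-linear) independence of $B$ already makes $(\T^N,\beta_{\T^N})$ a Dirichlet group; to check that it is a $\lambda$-Dirichlet group I would compute, for a multi index $\alpha \in \bigoplus_{n=1}^N \Z = \widehat{\T^N}$ with associated monomial character $h_\alpha \colon w \mapsto w^\alpha$,
\[
h_\alpha\big(\beta_{\T^N}(t)\big) = \prod_j \big(e^{-itb_j}\big)^{\alpha_j} = e^{-it\sum_j \alpha_j b_j},
\]
so that $\widehat{\beta_{\T^N}}(h_\alpha) = \sum_j \alpha_j b_j$. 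Since $\lambda$ is of integer type, every row $R_n$ lies in $\bigoplus \Z$ and $\lambda_n = \sum_j r^n_j b_j = \widehat{\beta_{\T^N}}(h_{R_n})$, whence $\lambda \subset \widehat{\beta_{\T^N}}(\widehat{\T^N})$.

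The second step is to identify the two support sets. By the uniqueness of the extension in Proposition~\ref{hx}, the character $h_{\lambda_n}$ of $\T^N$ must coincide with the monomial $h_{R_n}=[w\mapsto w^{R_n}]$, since both restrict to $e^{-i\lambda_n\cdot}$ along $\beta_{\T^N}$. The $\Z$-linear independence of $B$ makes $\alpha \mapsto \sum_j\alpha_j b_j$ injective, so the dictionary $R_n \leftrightarrow \lambda_n$ between rows and frequencies is unambiguous. Consequently the support set $E=\{h_{\lambda_n}:n\in\N\}$ of Definition~\ref{defihardygroups} coincides with the row set $\{R_n:n\in\N\}$ regarded inside $\widehat{\T^N}$, and therefore, directly from the definitions,
\[
H_p^\lambda(\T^N) = H_p^E(\T^N) = H_p^R(\T^N)
\]
as subspaces of $L_p(\T^N)$; no further isometry is needed at this point.

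Finally I would invoke the Bohr map. By Definition~\ref{zugcrash} and the remark following it, the Bohr map restricts to an isometric onto isomorphism $\Bcal\colon H_p^\lambda(\T^N)\to\Hcal_p(\lambda)$, $f\mapsto \sum_n\widehat{f}(h_{\lambda_n})e^{-\lambda_n s}$. Setting $\psi:=\Bcal^{-1}$ and using $H_p^\lambda(\T^N)=H_p^R(\T^N)$ from the previous step yields an onto isometry $\Hcal_p(\lambda)\to H_p^R(\T^N)$, $D\mapsto F$. Tracking coefficients, if $D=\sum a_n e^{-\lambda_n s}$ corresponds to $F=f$, then for each row with $\lambda_n=\sum_j r^n_j b_j$ we get $\widehat{F}(R_n)=\widehat{f}(h_{R_n})=\widehat{f}(h_{\lambda_n})=a_n(D)$, which is exactly the asserted matching. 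Uniqueness is then automatic: any coefficient-preserving onto isometry must agree on every monomial $e^{-\lambda_n s}$, and these span a dense subspace of $\Hcal_p(\lambda)$ for $1\le p<\infty$ by Theorem~\ref{comp_pol}, while for $p=\infty$ the coefficient condition already determines $F$ through its Fourier transform.

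I expect no serious obstacle, since the statement is structurally parallel to Theorem~\ref{summary}, with $\T^N$ in place of $\widehat{\mathbb{Q}_d}^N$. The only genuinely substantive point is the verification that $\T^N$ is a $\lambda$-Dirichlet group, which rests entirely on the integrality of the rows of $R$ together with the pullback computation above; the remainder is merely matching the two definitions of Fourier support and transporting the already-established isometric nature of the Bohr map.
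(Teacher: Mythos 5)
Your proposal is correct and takes exactly the route the paper intends: the paper presents Theorem \ref{integertype} as an immediate consequence of the observation that $(\T^{N},\beta_{\T^{N}})$ is a $\lambda$-Dirichlet group for frequencies of integer type (via Example \ref{examples2}) together with the definition of $\Hcal_{p}(\lambda)$ through the Bohr map, and your write-up merely fills in the details of the identification $H_{p}^{\lambda}(\T^{N})=H_{p}^{R}(\T^{N})$ and of the uniqueness via injectivity of the Fourier transform. No gaps.
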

Theorem \ref{integertype} also implies that the ordinary $\Hcal_{p}$ is in the following sense the 'biggest' Hardy type space of natural type.
\begin{Coro} \label{ordinary} Let $1\le p \le \infty$ and  $\lambda=(R,B)$  a frequency of natural type. Then there is a unique into isometry
$$ \psi \colon \Hcal_{p}(\lambda) \hookrightarrow \Hcal_{p}, ~~ \sum a_{n} e^{-\lambda_{n}s} \to \sum b_{n} n^{-s}$$
such that $a_{n}=b_{\mathfrak{p}^{\alpha}}$ for all $\alpha \in R$, where $\lambda_{n}=\sum \alpha_{j} b_{j}$.
\end{Coro}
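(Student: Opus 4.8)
The plan is to deduce the corollary from Theorem~\ref{integertype} by recognizing that \emph{natural} type is exactly the special case of \emph{integer} type in which all rows of the Bohr matrix have non-negative entries, and that precisely this sign restriction forces the corresponding functions on the torus into the \emph{analytic} Hardy space $H_p(\T^\infty) = \Hcal_p$. Since every natural-type frequency is in particular of integer type, Theorem~\ref{integertype} already supplies a unique onto isometry $\Hcal_p(\lambda) \to H_p^R(\T^N)$ carrying $D = \sum a_n e^{-\lambda_n s}$ to the $F \in H_p^R(\T^N)$ with $\widehat{F}(\alpha) = a_n$ whenever $\alpha \in R$ and $\lambda_n = \sum_j \alpha_j b_j$. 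It therefore remains to realize $H_p^R(\T^N)$ isometrically inside $\Hcal_p = H_p(\T^\infty)$.

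To move from $N$ variables to infinitely many, I would use the canonical coordinate projection $\pi \colon \T^\infty \to \T^N$ onto the first $N$ coordinates (the identity when $N = \infty$), which is a continuous homomorphism with dense range and whose dual $\widehat{\pi}$ is simply the inclusion of $\widehat{\T^N}$ into $\widehat{\T^\infty}$. Applying Proposition~\ref{dualmapping1} with $\phi = \pi$ and $E = R$ gives an onto isometry $\Psi_p \colon H_p^R(\T^N) \to H_p^R(\T^\infty)$ satisfying $\widehat{F} = \widehat{\Psi_p(F)} \circ \widehat{\pi}$, so that $\widehat{\Psi_p(F)}(\alpha) = \widehat{F}(\alpha) = a_n$ for every row $\alpha \in R$. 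The decisive use of the natural-type hypothesis enters here: since each $\alpha \in R$ lies in $\mathbb{N}_0^{(\mathbb{N})}$, the Fourier support of $\Psi_p(F)$ obeys the analyticity condition $\alpha \ge 0$ that defines $H_p(\T^\infty)$, whence $H_p^R(\T^\infty)$ is a closed, and thus isometric, subspace of $H_p(\T^\infty) = \Hcal_p$. Composing the two isometries yields the sought into isometry $\psi$, and passing the coefficients through Bohr's identification~\eqref{vision}, namely $\widehat{g}(\alpha) = b_{\mathfrak{p}^\alpha}$ for $g \in H_p(\T^\infty)$, delivers exactly $a_n = b_{\mathfrak{p}^\alpha}$.

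For uniqueness I would appeal to the density of $Pol(\lambda)$ in $\Hcal_p(\lambda)$ — Theorem~\ref{comp_pol} for $1 \le p < \infty$, and the weak-star density from Proposition~\ref{densitytrigonometric} for $p = \infty$: any into isometry respecting the prescribed coefficient correspondence must agree with $\psi$ on all $\lambda$-Dirichlet polynomials, hence everywhere. I expect the only delicate point to be bookkeeping rather than analysis, namely keeping the indexings aligned — each row $\alpha \in R$ tied at once to the frequency datum $\lambda_n = \sum_j \alpha_j b_j$ and to the integer $\mathfrak{p}^\alpha$ — and confirming that the finite-versus-infinite $N$ distinction is absorbed uniformly by the single projection $\pi$. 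Since all the substantive analytic content (the isometric identifications and the density statements) is already established in Theorem~\ref{integertype} and Proposition~\ref{dualmapping1}, the corollary should follow essentially formally once the sign restriction is exploited.
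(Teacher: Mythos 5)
Your argument is correct and is exactly the route the paper intends: the paper states Corollary~\ref{ordinary} without explicit proof as an immediate consequence of Theorem~\ref{integertype}, and your filling-in — pass to $H_p^R(\T^N)$ via that theorem, transplant to $\T^\infty$ with the coordinate projection and Proposition~\ref{dualmapping1}, and use the non-negativity of the rows of $R$ to land inside the analytic space $H_p(\T^\infty)=\Hcal_p$ with coefficients matched through \eqref{vision} — is the natural way to make that deduction precise.
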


So, if $\lambda$ is of integer or natural type, then the group $\mathbb{T}^\infty$ is appropriate. The following remark can be seen as a sort of converse.

\begin{Rema} If $(\T^{\infty}, \beta)$ is a $\lambda$-Dirichlet group, then $\lambda$ is of integer type.
\end{Rema}
\begin{proof}
By assumption and Lemma~\ref{dualmapping} there is a monomorphism $T\colon \oplus_{n=1}^{N} \mathbb{Z} \hookrightarrow \R$ such that $\lambda \subset \text{Im} ~T$. Denoting by $e_{k}$ the $k$th unit vectors in $\oplus_{n=1}^{N} \mathbb{Z}$ the sequence $(T(e_{k}))_{k}$ is $\mathbb{Z}$-linearly independent and $\lambda \subset \operatorname{span}_{\mathbb{Z}} (T(e_{k}))_{k}$.
\end{proof}

\subsection{On Bohr's problem} \label{B-prob}

A prominent problem in Bohr's time was to determine the maximal width of the strip of uniform but non absolutely convergence
$$S(\lambda):=\sup_{D \in \mathcal{D}(\lambda)} \sigma_{a}(D)-\sigma_{u}(D).$$
Note that we always have $S(\lambda)\le \frac{L(\lambda)}{2}$ by the Cauchy-Schwarz inequality. In the ordinary case, where $L(\log(n))=1$, a celebrated result is that $S(\log(n))=\frac{1}{2}$ (see \cite[\S 5, Theorem V]{BohnenblustHille}, \cite{Defant}, \cite{Helson}, or \cite{QQ}).
So
the question may arise whether   $S(\lambda)=\frac{L(\lambda)}{2}$ always holds. But this is in general false due to a result of Neder
(see \cite[\S4]{Neder}, actually $\sigma_{b}^{ext}$, see \cite{DefantSchoolmann2} for definition, is considered instead of $\sigma_{u}$, but Neder's construction can easily be modified  for $\sigma_{u}$).
\begin{Prop} To every $x>0$ and $0\le y\le\frac{x}{2}$ there is a frequency $\lambda$ such that $L(\lambda)=x$ and $S(\lambda)=y$.
\end{Prop}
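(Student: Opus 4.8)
The plan is to realise every admissible pair $(x,y)$ by \emph{superposing} two frequencies of opposite extremal behaviour, and then to show that no Dirichlet series can profit from the mixing. Concretely, I would take $\lambda$ to be the increasing rearrangement of a disjoint union $\mu\sqcup\nu$, where $\mu$ is a rescaled copy of the ordinary frequency and $\nu$ is a $\mathbb{Q}$-linearly independent ``dilution''. For the first block set $\mu_{k}=\tfrac{1}{2y}\log k$ (for $y>0$); since $\sum a_{k}e^{-\mu_{k}s}=\sum a_{k}k^{-s/(2y)}$ becomes an ordinary Dirichlet series after the substitution $w=s/(2y)$, all three abscissas rescale by $2y$, so $L(\mu)=2y$ and, by the classical Bohr--Bohnenblust--Hille theorem $S(\log n)=\tfrac12$ together with the general bound $S\le L/2$, we get $S(\mu)=y$. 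For the second block I would choose reals $\nu_{k}$ that are $\mathbb{Q}$-linearly independent and remain so when adjoined to the basis $\{\tfrac{1}{2y}\log p_{j}\}$ of $\mu$, positioned so that $\#\{k:\nu_{k}\le t\}\sim e^{xt}$; independence makes $\{h_{\nu_k}\}$ a Sidon set, whence $\sigma_{a}=\sigma_{u}$ for every series supported on $\nu$, i.e.\ $S(\nu)=0$, while the prescribed counting function gives $L(\nu)=x$. The two boundary cases are degenerate instances: for $y=0$ take $\lambda=\nu$ alone, and for $y=\tfrac{x}{2}$ take $\lambda=\mu$ alone with $\mu_{k}=\tfrac1x\log k$.

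Granting this, the two target quantities are easy. Since $\#\{n:\lambda_{n}\le t\}\sim e^{2yt}+e^{xt}\sim e^{xt}$ (using $2y\le x$), the identity $L(\lambda)=\limsup_{t}\tfrac{\log\#\{n:\lambda_n\le t\}}{t}$ yields $L(\lambda)=x$. The lower bound $S(\lambda)\ge y$ is immediate because every $\mu$-Dirichlet series is also a $\lambda$-Dirichlet series with unchanged abscissas, whence $S(\lambda)\ge S(\mu)=y$.

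The crux is the upper bound $S(\lambda)\le y$, that is, showing the two blocks cannot enlarge the uniform--absolute gap. I would split an arbitrary $D\in\mathcal{D}(\lambda)$ as $D=D_{\mu}+D_{\nu}$ along the blocks. Absolute convergence gives $\sigma_{a}(D)=\max(\sigma_{a}(D_{\mu}),\sigma_{a}(D_{\nu}))$. For the uniform abscissa the key point is a \emph{projection lemma}: because $\mu$ and $\nu$ are jointly independent, the associated Dirichlet group factors as $G=G_{\mu}\times G_{\nu}$, and the conditional expectation (integration over $G_{\mu}$) is a contraction that sends the $t$-th partial sum of $D$ to the $t$-th partial sum of $D_{\nu}$ and annihilates $D_{\mu}$; passing between vertical lines and the group via Proposition~\ref{basic} then forces $\sigma_{u}(D_{\nu})\le\sigma_{u}(D)$, and symmetrically $\sigma_{u}(D_{\mu})\le\sigma_{u}(D)$. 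Writing $A_{\mu},U_{\mu}$ and $A_{\nu}=U_{\nu}$ for the respective abscissas (so $A_{\mu}-U_{\mu}\le y$ and $A_{\nu}=U_{\nu}$), a short case analysis of
\[
\sigma_{a}(D)-\sigma_{u}(D)\le\max(A_{\mu},A_{\nu})-\max(U_{\mu},U_{\nu})
\]
shows the right-hand side never exceeds $y$, giving $S(\lambda)\le y$.

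The step I expect to be the main obstacle is the projection lemma together with the construction of $\nu$. One must select the $\nu_{k}$ simultaneously $\mathbb{Q}$-independent of the countable set $\{\tfrac{1}{2y}\log p_{j}\}$ (so that the product-group structure, and hence the contractive conditional expectation, is available) \emph{and} with the prescribed exponential counting function; this is achieved by a greedy perturbation avoiding countably many rational hyperplanes while keeping the asymptotics. One must also verify carefully that integrating out $G_{\mu}$ really transfers uniform convergence on half-planes from $D$ to $D_{\nu}$, evaluating sup-norms of the partial-sum differences on the line $\operatorname{Re}=\sigma$ and passing from Dirichlet polynomials to the full series by approximation. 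This is precisely the place where Neder's original argument (phrased for $\sigma_{b}^{\mathrm{ext}}$) has to be adapted to $\sigma_{u}$.
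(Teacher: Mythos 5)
The paper gives no proof of this proposition at all: it simply defers to Neder's construction, remarking only that his argument (phrased for $\sigma_{b}^{\mathrm{ext}}$) ``can easily be modified'' for $\sigma_{u}$. Your argument is therefore a genuinely different, self-contained route, and it is essentially sound. The splicing idea --- a rescaled copy of $(\log n)$ realizing the pair $(L,S)=(2y,y)$ glued to a $\mathbb{Q}$-linearly independent block realizing $(x,0)$ --- does work, and the two points you flag as delicate are exactly the ones that need care, but both are fillable with the paper's own machinery. For the construction of $\nu$, a greedy choice avoiding the countably many rational hyperplanes spanned by $\{\tfrac{1}{2y}\log p_{j}\}$ and the previously chosen $\nu_{k}$'s is compatible with any prescribed counting asymptotics, and it also guarantees that the glued sequence has no collisions, hence is a genuine frequency. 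For the projection lemma, take the combined basis $B$ so that $\big(\widehat{\mathbb{Q}_d}^{\infty},\widehat{T_{B}}\big)$ splits as $G_{\mu}\times G_{\nu}$ (Example \ref{examples3}); for any block $\sum_{n=N}^{M}a_{n}e^{-\lambda_{n}(\sigma+it)}$, Lemma \ref{isompolyinfty} converts the supremum over the line $[\mathrm{Re}=\sigma]$ into the supremum over the group, where integrating out $G_{\mu}$ is a contraction, and since every block of $D_{\nu}$ arises as the image of a block of $D$ under this projection, the uniform Cauchy property passes to $D_{\nu}$ (and symmetrically to $D_{\mu}$), giving $\sigma_{u}(D)\ge\max(\sigma_{u}(D_{\mu}),\sigma_{u}(D_{\nu}))$; your case analysis then closes the upper bound $S(\lambda)\le y$ correctly. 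The trade-off between the two routes is worth noting: your proof is short but imports the deep equality $S((\log n))=\tfrac12$ (Bohnenblust--Hille) as a black box to produce the extremal $\mu$-series for the lower bound $S(\lambda)\ge y$, whereas Neder's original construction is elementary and ad hoc; on the other hand your argument exhibits cleanly \emph{why} the two blocks cannot interact, which is invisible in a direct construction.
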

If $\lambda$ satisfies Bohr's theorem, then we may reformulate $S(\lambda)$  as follows
$$S(\lambda)=\sup_{ D\in \mathcal{D}_{\infty}(\lambda)} \sigma_{a}(D),$$
which actually in a sense is  one of the key steps for proving $S((\log n))=\frac{1}{2}$.
This motivates  to define (like already done in the ordinary case, see \cite[\S 12.2]{Defant}) the following '$p$-version' of $S(\lambda)$:
$$S_{p}(\lambda):=\sup_{D \in \mathcal{H}_{p}(\lambda)} \sigma_{a}(D).$$
Note that $S_{p}(\lambda)$ is decreasing in $p$ and that $S_{1}(\lambda)\le L(\lambda)$. In the particular case $p=2$ we have $S_{2}(\lambda)=\frac{L(\lambda)}{2}$ for all $\lambda$'s. Indeed, the Cauchy-Schwarz inequality gives $S_{2}(\lambda)\le\frac{L(\lambda)}{2}$, and conversely $D_{\varepsilon}(s)=\sum e^{-\left(\frac{L(\lambda)}{2}+\varepsilon\right)\lambda_{n}}e^{-\lambda_{n}s} \in \mathcal{H}_{2}(\lambda)$ satisfies $\sigma_{a}(D_{\varepsilon})=\frac{L(\lambda)}{2}-\varepsilon$ for all $\varepsilon>0$.

\medskip

Given $\lambda$, the exact value of $S_{p}(\lambda)$ seems hard to determine. But at least if $\lambda$ is of full natural type, then we will show that
$S(\lambda)=\frac{L(\lambda)}{2}$ as a consequence of  Theorem \ref{ordinary}. For $\mathbb{Q}$-linearly independent frequencies see Corollary \ref{Qlin}.

\begin{Coro}\label{striptease}
 Let  $\lambda=(R,B)$ be a frequency of natural type, where $R$ is full. If  $\lim_{n\to \infty} \frac{\log(n)}{\lambda_{n}}$ exists, then
$S_{p}(\lambda)=\frac{L(\lambda)}{2}$ for all $1\le p\le \infty$. Further, if $\lambda$ satisfies $(LC)$ and $L(\lambda)<\infty$, then there is a Dirichlet series $D \in \mathcal{D}_{\infty}(\lambda)$ with $\sigma_{a}(D)=\frac{L(\lambda)}{2}$ and  $S(\lambda)=\frac{L(\lambda)}{2}$.
\end{Coro}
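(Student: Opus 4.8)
The plan is to transport the known ordinary values $S_p(\log n)=\tfrac12$ $(1\le p\le\infty)$ to the general frequency $\lambda=(R,B)$ through the isometry of Corollary~\ref{ordinary}. Since $R$ is full and natural, every multi-index occurs as a row, so $\alpha\mapsto\mathfrak{p}^{\alpha}$ shows that $\psi\colon\Hcal_p(\lambda)\to\Hcal_p$ is in fact onto; thus $D=\sum a_n e^{-\lambda_n s}\in\Hcal_p(\lambda)$ and $E=\psi(D)=\sum b_m m^{-s}\in\Hcal_p$ carry the \emph{same} coefficients $a_n=b_{\mathfrak{p}^{\alpha}}$, but with the two competing ``frequencies'' $\lambda_n=\langle\alpha,b\rangle$ and $\log\mathfrak{p}^{\alpha}=\langle\alpha,\log p\rangle$. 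Writing $L=L(\lambda)$ (the case $L=0$, i.e.\ finite $N$, being elementary since then $\sum e^{-\lambda_n\tau}<\infty$ for all $\tau>0$), I note that $S_p$ is decreasing in $p$ and that $S_2(\lambda)=\tfrac L2$ is already known. Hence it suffices to prove the two endpoint estimates $S_1(\lambda)\le\tfrac L2$ and $S_\infty(\lambda)\ge\tfrac L2$, which by monotonicity force $S_p(\lambda)=\tfrac L2$ for every $p$.

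The decisive structural input, replacing a naive ``$\sigma_a$ scales by $L$'', is the identity $L=\limsup_j\frac{\log p_j}{b_j}$: fullness yields the Euler product $\sum_n e^{-\lambda_n s}=\prod_j(1-e^{-b_j s})^{-1}$, whose abscissa of convergence equals both $L=\sigma_c(\sum e^{-\lambda_n s})$ and the abscissa $\limsup_j\frac{\log j}{b_j}$ of $\sum_j e^{-b_j s}$, and $\log p_j\sim\log j$ gives the claim. In particular, for $\epsilon>0$ only finitely many directions are \emph{bad} ($\frac{\log p_j}{b_j}>L+\epsilon$), while infinitely many are \emph{good} ($\frac{\log p_j}{b_j}\ge L-\epsilon$). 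For the upper bound fix $D\in\Hcal_1(\lambda)$, $E=\psi(D)$, and the finite bad set $F$, and factor $m=\mathfrak{p}^{\alpha}=d\,m''$ into its $F$-smooth part $d=\mathfrak{p}^{\alpha'}$ and $F$-rough part $m''$. For $\tau>\tfrac{L+\epsilon}2$ set $\sigma=\tfrac{\tau}{L+\epsilon}>\tfrac12$; on good directions $\langle\alpha'',b\rangle\ge\tfrac1{L+\epsilon}\log m''$, so
\[
\sum_{\alpha}|b_{\mathfrak{p}^{\alpha}}|e^{-\tau\langle\alpha,b\rangle}\le\sum_{d}e^{-\tau\langle\alpha',b\rangle}\,T_\sigma(d),\qquad T_\sigma(d):=\sum_{m''}|b_{d m''}|\,(m'')^{-\sigma}.
\]
The slice $E_d=\sum_{m''}b_{dm''}(m'')^{-s}$ arises from $E$ by a contractive projection (integrating out the bad variables against a character), so $\|E_d\|_{\Hcal_1}\le\|E\|_{\Hcal_1}$, and the quantitative ordinary bound $S_1=\tfrac12$ (bounded inclusion $\Hcal_1\hookrightarrow\ell_1((m^{-\sigma}))$ for $\sigma>\tfrac12$) gives $T_\sigma(d)\le C_\sigma\|E\|_{\Hcal_1}$ uniformly in $d$. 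Since $F$ is finite, $\sum_d e^{-\tau\langle\alpha',b\rangle}=\prod_{j\in F}(1-e^{-\tau b_j})^{-1}<\infty$, whence $\sigma_a(D)\le\tfrac{L+\epsilon}2$ and, letting $\epsilon\to0$, $S_1(\lambda)\le\tfrac L2$.

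For the lower bound fix $\epsilon,\delta>0$ and use the classical Bohnenblust--Hille construction on finitely many of the infinitely many good primes to obtain $E\in\Hcal_\infty$ with $\sigma_a(E)\ge\tfrac12-\delta$. Its transfer $D=\psi^{-1}(E)\in\Hcal_\infty(\lambda)$ is supported on $\alpha$ with $\langle\alpha,b\rangle\le\tfrac1{L-\epsilon}\log\mathfrak{p}^{\alpha}$, so $\sum_\alpha|b_{\mathfrak{p}^{\alpha}}|e^{-\tau\langle\alpha,b\rangle}\ge\sum_m|b_m|m^{-\tau/(L-\epsilon)}$ diverges for $\tau<(L-\epsilon)\sigma_a(E)$; thus $\sigma_a(D)\ge(L-\epsilon)(\tfrac12-\delta)$, and $\epsilon,\delta\to0$ yields $S_\infty(\lambda)\ge\tfrac L2$. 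This proves the first assertion. For the second, $(LC)$ gives Bohr's theorem and $\mathcal{D}_\infty^{ext}(\lambda)=\mathcal{D}_\infty(\lambda)$ (Theorem~\ref{conditions2}), so with $L<\infty$ Theorem~\ref{mainresultinfty} identifies $\mathcal{D}_\infty(\lambda)=\Hcal_\infty(\lambda)$ isometrically and Theorem~\ref{conditions} makes it complete. I then glue the lower-bound examples: choosing pairwise disjoint infinite blocks of good primes whose ratios tend to $L$, I build $D_k\in\Hcal_\infty(\lambda)$ with disjoint supports, $\sigma_a(D_k)\to\tfrac L2$ and $\|D_k\|_\infty\le 2^{-k}$, and set $D=\sum_k D_k\in\mathcal{D}_\infty(\lambda)$. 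Disjointness gives $\sigma_a(D)\ge\sup_k\sigma_a(D_k)=\tfrac L2$, while the first part gives $\sigma_a(D)\le S_\infty(\lambda)=\tfrac L2$, hence $\sigma_a(D)=\tfrac L2$; as Bohr's theorem forces $\sigma_u(D)\le0$, we obtain $S(\lambda)\ge\sigma_a(D)-\sigma_u(D)=\tfrac L2$, which with $S(\lambda)\le\tfrac L2$ finishes the proof.

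I expect the main obstacle to be precisely this frequency distortion: $\psi$ preserves coefficients but not frequencies, so $\sigma_a$ is \emph{not} a fixed multiple of the ordinary abscissa, and the per-direction ratios $\frac{\log p_j}{b_j}$ genuinely differ from $L$. The device that rescues the argument is the Euler-product identity $L=\limsup_j\frac{\log p_j}{b_j}$, which confines the distortion to finitely many directions—absorbed into a convergent geometric factor—plus a ``good'' part that, via contractive slice projections, is reduced to the ordinary $S_1$- and $S_\infty$-theory.
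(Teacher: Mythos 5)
Your route is genuinely different from the paper's. The paper uses the hypothesis that $\lim_n\tfrac{\log n}{\lambda_n}$ exists to assert, after Corollary~\ref{ordinary}, that $\sigma_a(D)=L(\lambda)\,\sigma_a(E)$ for associated pairs, and then simply transfers $S_p((\log n))=\tfrac12$; you discard that hypothesis entirely and attack the per--coordinate distortion $\tfrac{\log p_j}{b_j}$ directly (your suspicion that the abscissa does not scale by a fixed factor is well founded). Your upper bound $S_1(\lambda)\le \tfrac{L}{2}$ --- finite bad set, contractive slice projections, the quantitative embedding of $\Hcal_1$ into $\ell_1((m^{-\sigma}))$ for $\sigma>\tfrac12$ --- is essentially sound, but only after the basis is ordered increasingly: the identity $L=\limsup_j\tfrac{\log p_j}{b_j}$ and the finiteness of $\{j:\log p_j>(L+\epsilon)b_j\}$ are really statements about the increasing rearrangement $b_{(j)}$ (one has $L=\limsup_j\tfrac{\log j}{b_{(j)}}$, the exponent of convergence of the multiset $\{b_j\}$), and for an arbitrary ordering of $B$ a permutation can make your bad set infinite. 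Since each $b_j$ is itself some $\lambda_n$, reordering $B$ increasingly is legitimate and preserves fullness and natural type, so this is a fixable omission you should have flagged.

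The genuine gap is in the lower bound. Knowing that infinitely many directions are good is not enough to run the Bohnenblust--Hille/Kahane--Salem--Zygmund construction: if the good indices were, say, $\{2^{2^k}\}$, then \emph{every} $E\in\Hcal_\infty$ supported on the corresponding smooth numbers has $\sigma_a(E)=0$, because there are only $N^{o(1)}$ such integers up to $N$, so $\sum_{n\le N}|b_n|\le\|E\|_\infty N^{o(1)}$ by Cauchy--Schwarz and the Bohr--Cahen formula kills the abscissa. What your argument actually requires is that for infinitely many $K$ the good set contains roughly $K$ indices $j\le K^{1+o(1)}$, so that the $K$ primes used satisfy $\log p_{j_K}\sim\log K$ and the construction yields $\sigma_a(E)\ge\tfrac{\log K}{2\log p_{j_K}}-o(1)\to\tfrac12$. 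This density statement is in fact true --- monotonicity of $b_{(j)}$ forces the good set to contain whole intervals $[j_0^{1-\delta},j_0]$ around each $j_0$ realizing the limsup --- but you neither state nor prove it, and ``finitely many of the infinitely many good primes'' glosses over exactly the point where a sparse good set would sink the argument. The same gap propagates into the second part, where you need $\sigma_a(D_k)\to\tfrac{L}{2}$ on disjoint blocks. The paper avoids all of this by invoking the limit hypothesis and transferring a single extremal $E\in\mathcal{D}_\infty((\log n))$ with $\sigma_a(E)=\tfrac12$; the hypothesis-free version it attributes to \cite{Maestre} goes instead through $\operatorname{mon}H_\infty(B_{c_0})\subset\ell_{2+\varepsilon}$, which is rearrangement-invariant and sidesteps the sparseness issue entirely.
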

Maximal width of uniform and but non absolute convergence of Dirichlet series for frequencies $\lambda$ as in the previous corollary were also studied in the recent article \cite{Maestre}. There the authors
look at the special case $S(\lambda)= S_\infty(\lambda)$ assuming $(BC)$, but  without assuming that  $\lim_{n\to \infty} \frac{\log(n)}{\lambda_{n}}$ exists. Their reasoning  is based on the fact the set $\operatorname{mon} H_{\infty}(B_{c_{0}})$ of monomial convergence is included in $\ell_{2+\varepsilon}\cap \D^{\N}$ for all $\varepsilon>0$, which is basically a consequence of a probabilistic argument (Kahane-Zygmund inequality). Their proof extends word by word for $\lambda$'s satisfying $(LC)$. Using the fact that $\operatorname{mon} H_{p}(\T^{\infty})=\ell_{2}\cap \D^{\N}$, $1\le p <\infty$, and $\mathcal{H}_{p}(\lambda)=H_{p}(\T^{\infty})$ (see \cite[\S 12.4.2]{Defant}) the same argument gives $S_{p}(\lambda)=\frac{L(\lambda)}{2}$  without any assumption on the  $\lambda$'s. We feel that our proof  of Corollary~\ref{striptease} (under the stronger assumption that  $\lim_{n\to \infty} \frac{\log(n)}{\lambda_{n}}$ exists) is  more elementary.
\begin{proof}[Proof of Corollary \ref{striptease}] By Corollary \ref{ordinary}   we have that $\mathcal{H}_{p}(\lambda)=\mathcal{H}_{p}((\log n))$, where the isometry preserves the Dirichlet coefficients. Then by assumption for all $\varepsilon>0$ there is $n_{0}$ such that $\lambda_{n}(L(\lambda)-\varepsilon)\le \log(n) \le \lambda_{n}(L(\lambda)+\varepsilon)$ for all $n\ge n_{0}$. From this by an easy calculation we see that $\sigma_{a}(D)=\sigma_{a}(E)L(\lambda)$ for all $D \in \mathcal{H}_{p}(\lambda)$ and $E \in \mathcal{H}_{p}((\log n))$ associated to each other. This gives $S_{p}(\lambda)=L(\lambda) S_{p}((\log n))=\frac{L(\lambda)}{2}$. By Theorem \ref{Dstar} and Theorem \ref{mainresultinfty} (applied to $(\log n)$)  we have $\mathcal{D}_{\infty}(\lambda)\subset \mathcal{D}_{\infty}((\log n))$
since $\lambda$ is of natural type. Assuming $(LC)$ and $L(\lambda)<\infty$ we actually have equality (again Theorem \ref{mainresultinfty}), and we obtain $S(\lambda)=S((\log n))L(\lambda)=\frac{L(\lambda)}{2}$. Since there is $E \in \mathcal{D}_{\infty}((\log n))$ with $\sigma_{a}(E)=\frac{1}{2}$ (see \cite[\S 4]{Defant}), the claim is proven.

\end{proof}
Now we consider the case of $\mathbb{Q}$-linearly independent frequencies $\lambda$. Note that in this case we already know from \cite{DefantSchoolmann2} that $\mathcal{H}_{\infty}(\lambda)=\mathcal{D}_{\infty}(\lambda)=\ell_{1}(\N)$ and therefore that $S_{\infty}(\lambda)=S(\lambda)=0$.
\begin{Coro} \label{Qlin}
Let $1\le p  < \infty$ and $\lambda$ be $\mathbb{Q}$-linearly independent.Then the following isometric equalities hold:
\begin{equation*}
\mathcal{H}_{p}(\lambda)=H^{(1)}_{p}(\T^{\infty}):=\{ f \in H_{p}(\T^{\infty}) \mid \widehat{f}(\alpha)\ne 0 \Rightarrow \sum \alpha_{j}=1 \}
\end{equation*}
and
\begin{equation*}
 \mathcal{D}_{\infty}(\lambda)=H^{(1)}_{\infty}(\T^{\infty})\,.
 \end{equation*}
In particular, $S_{p}(\lambda)=\frac{L(\lambda)}{2}.$
\end{Coro}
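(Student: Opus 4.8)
The plan is to reduce both isometric equalities to Theorem~\ref{integertype}. Since $\lambda$ is $\mathbb{Q}$-linearly independent, the sequence $B:=\lambda$ is itself a basis for $\lambda$, and the associated Bohr matrix $R$ has $n$th row equal to the unit vector $e_n$, because $\lambda_n = 1\cdot b_n$. Thus $\lambda$ is of natural type with a basis of length $N=\infty$. First I would apply Theorem~\ref{integertype} (for $1\le p<\infty$) to this decomposition: it provides a unique coefficient-preserving onto isometry $\mathcal{H}_p(\lambda)\to H_p^R(\T^\infty)$. It then only remains to identify $H_p^R(\T^\infty)$ with $H_p^{(1)}(\T^\infty)$, and this is a matter of spectra: since $f\in H_p(\T^\infty)$ already forces $\widehat f(\alpha)=0$ whenever $\alpha\not\ge 0$, a non-negative integer multi-index $\alpha$ belongs to $R=\{e_n\colon n\in\mathbb{N}\}$ precisely when $\sum_j\alpha_j=1$. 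Hence $H_p^R(\T^\infty)=H_p^{(1)}(\T^\infty)$, which yields the first equality.

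For the second equality I would invoke Theorem~\ref{integertype} once more, now with $p=\infty$, to obtain $\mathcal{H}_\infty(\lambda)=H_\infty^{(1)}(\T^\infty)$ isometrically, and combine this with the already recorded fact (due to \cite{DefantSchoolmann2}) that $\mathcal{D}_\infty(\lambda)=\mathcal{H}_\infty(\lambda)$, both coinciding with $\ell_1(\mathbb{N})$ in the $\mathbb{Q}$-linearly independent case. This gives $\mathcal{D}_\infty(\lambda)=H_\infty^{(1)}(\T^\infty)$ at once.

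Finally, to compute $S_p(\lambda)$ for $1\le p<\infty$, the decisive observation is that the set of Dirichlet coefficient sequences realized inside $\mathcal{H}_p(\lambda)$ equals $\ell_2(\mathbb{N})$ for \emph{every} finite $p$. Under the identification with $H_p^{(1)}(\T^\infty)$ the coordinate functions $z_n$ form an independent Steinhaus sequence, so the Khinchine--Steinhaus inequality gives $\|\sum_n a_n z_n\|_{L_p(\T^\infty)}\asymp\|a\|_{\ell_2}$; hence $\sum_n a_n z_n\in L_p(\T^\infty)$ if and only if $a\in\ell_2$, with no dependence on $p$. Since the abscissa $\sigma_a$ of a $\lambda$-Dirichlet series depends only on its coefficient sequence and on $\lambda$, the value $S_p(\lambda)$ is the same for all finite $p$, and the Cauchy--Schwarz argument already used for $p=2$ applies unchanged: for $a\in\ell_2$ and $\sigma>L(\lambda)/2$ one has $\sum_n|a_n|e^{-\sigma\lambda_n}\le\|a\|_{\ell_2}(\sum_n e^{-2\sigma\lambda_n})^{1/2}<\infty$ because $2\sigma>L(\lambda)=\sigma_c(\sum e^{-\lambda_n s})$, so $S_p(\lambda)\le L(\lambda)/2$; conversely the series $\sum_n e^{-(L(\lambda)/2+\varepsilon)\lambda_n}e^{-\lambda_n s}$ has $\ell_2$ coefficients and abscissa of absolute convergence $L(\lambda)/2-\varepsilon$, giving the reverse bound. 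The main obstacle is precisely this stabilization of the coefficient space at $\ell_2$ for all finite $p$, where the Khinchine--Steinhaus inequality is essential; everything else is bookkeeping with Theorem~\ref{integertype} and the spectral conditions.
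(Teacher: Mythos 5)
Your proof is correct and follows essentially the same route as the paper: both identify $\mathcal{H}_p(\lambda)$ with the $1$-homogeneous Hardy space over $\T^{\infty}$ via Theorem~\ref{integertype} (the paper phrases this through Corollary~\ref{ordinary} and $1$-homogeneous ordinary Dirichlet series), and both reduce the computation of $S_p(\lambda)$ to the case $p=2$ by the Khinchine--Steinhaus inequality. The only difference is that you spell out the equality $\mathcal{D}_{\infty}(\lambda)=H^{(1)}_{\infty}(\T^{\infty})$ (using the quoted fact $\mathcal{D}_{\infty}(\lambda)=\mathcal{H}_{\infty}(\lambda)$ for $\mathbb{Q}$-linearly independent $\lambda$) and the Cauchy--Schwarz argument giving $S_2(\lambda)=\frac{L(\lambda)}{2}$, both of which the paper's proof leaves implicit.
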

\begin{proof}By Theorem \ref{ordinary} we know that  $\mathcal{H}_{p}(\lambda)=\Bcal(H^{(1)}_{p}(\T^{\infty}))=:\mathcal{H}^{(1)}_{p}((\log n))$ is the space of $1$-homogeneous ordinary Dirichlet series. Since $\mathcal{H}^{(1)}_{p}((\log n))=\mathcal{H}_{2}((\log n))$ isomorphically (by Khinchine's inequality, see \cite[\S 11.2.2]{Defant}) we have $\mathcal{H}_{p}(\lambda)=\mathcal{H}_{2}(\lambda)$ and so $S_{p}(\lambda)=S_{2}(\lambda)=\frac{L(\lambda)}{2}$.
\end{proof}

We close this section by giving an example of $\lambda$ which is not of integer type.
\begin{Exam}
Let $(p_{n})$ be the sequence of prime numbers and define $\lambda_{1}:=1$, $\lambda_{n}:=\frac{p_{n}^{2}+1}{p_{n}}$, $n \ge 2$. Then clearly $B:=(1, 0, \ldots)$ is a basis with Bohr matrix
$$R:=\begin{pmatrix}
 1 &0&\ldots \\
\frac{2^2+1}{2}&0& \ldots\\
\frac{3^2+1}{3}&0& \ldots\\
\frac{5^2+1}{5}&0& \ldots\\
\vdots &    \vdots & & \\
\end{pmatrix}.
$$
Assume that $\lambda$ is of integer type. Then there is a basis $X=(x, 0,\ldots)$ of length $1$ with an integer Bohr matrix (see \cite{Bohr3}, \S 5, pp. 121-122). Choose $(k_{n})\subset \Z \setminus \{0\}$ such that
$$\frac{p_{n}^{2}+1}{p_{n}}=k_{n}\,x$$
for all $n \in \N$. Hence $x \in \mathbb{Q}$. Now write $x=\frac{a}{b}$, where $a,b\ne 0$. Then
$$ b (p_{n}^{2}+1)=p_{n} k_{n} a.$$
Since $p_{n}^2+1$ is not divisible by $p_{n}$, we conclude  that $b$ is divisible by $p_{n}$ for all $n \in \N$.  Hence $b=0$, a contradiction. Moreover,
$$\lambda_{n+1}-\lambda_{n}=p_{n+1}-p_{n}-\left(\frac{1}{p_{n}}-\frac{1}{p_{n+1}}\right)\ge 1-\frac{1}{2}=\frac{1}{2}\,,$$
implying that $\lambda$ is strictly increasing and satisfies $(BC)$ with $l=0$.

\end{Exam}

\smallskip

\section{Structure theory} \label{structuretheory}
Based on the Fourier analysis setting from the preceding section we extend some important cornerstones of the  $\mathcal{H}_p$-theory of ordinary Dirichlet series to the range of general Dirichlet series.

\subsection{Translations} \label{hard}
In this section we fix a frequency $\lambda$, and collect a few independently interesting tools on translations of $\lambda$-Dirichlet series which  also later will be important.
\begin{Defi} Let $D=\sum a_{n}e^{-\lambda_{n}s}$ be a  $\lambda$-Dirichlet series and $z \in \C$. Then we call
$$D_{z}(s):=\sum a_{n} e^{-\lambda_{n}z} e^{-\lambda_{n}s}$$ the   translation of $D$ about $z$.
\end{Defi}
We call the translation $D_{z}$ horizontal if $z \in \R$, and vertical if $z=i\tau$, $\tau \in \R$.
 Given a $\lambda$-Dirichlet group $(G, \beta)$ and a  Dirichlet series $D=\sum a_{n}e^{-\lambda_{n}s}$, we for every  $\tau \in \R$  have
\[
D_{i \tau} =  \sum a_n e^{-i \lambda_n \tau}e^{-\lambda_n s} = \sum a_n h_{\lambda_n}(\beta(\tau))e^{-\lambda_n s}\,.
\]
More generally, we consider so-called vertical limits.
\begin{Defi} Given a $\lambda$-Dirichlet group $(G, \beta)$ and a Dirichlet series $D = \sum a_n e^{-\lambda_n s}$, we call  Dirichlet series of the form $$D^{\omega}(s)=\sum a_{n}\omega(\lambda_{n}) e^{-\lambda_{n}s},~ \omega \in G$$  vertical limits of $D$ associated to $G$.
\end{Defi}

In the following we study in which sense $\lambda$-Dirichlet series belonging to $\mathcal{H}_p(\lambda)$ are stable under taking vertical limits and horizontal translations.

\medskip

The first observation shows that the translation invariance of the Haar measure on $G$ and the definition of $\Hcal_{p}(\lambda)$  immediately imply that all mappings $D  \mapsto D^{\omega}$
constitute onto isometries on $\Hcal_{p}(\lambda)$.
\begin{Prop} \label{translationII} Let $(G, \beta)$ be a $\lambda$-Dirichlet group, $1\le p \le \infty$ and $D \in \Hcal_{p}(\lambda)$. Then for
$\omega \in G$ we have that $D\in \Hcal_{p}(\lambda)$ if and only if
$D^{\omega}\in \Hcal_{p}(\lambda)$, and in this case
and $\|D^{\omega}\|_{p}=\|D\|_{p}$.
\end{Prop}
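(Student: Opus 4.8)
The plan is to transport everything to the group side via the Bohr map and there realize $D \mapsto D^{\omega}$ as an honest translation operator, which is isometric simply because the Haar measure $m$ on $G$ is translation invariant. Recall from Definition~\ref{zugcrash} that the Bohr map $\Bcal \colon H_{p}^{\lambda}(G) \to \Hcal_{p}(\lambda)$ is an isometric onto isomorphism, and that if $D = \sum a_{n} e^{-\lambda_{n}s}$ is associated with $f \in H_{p}^{\lambda}(G)$, then $a_{n} = \widehat{f}(h_{\lambda_{n}})$. For a fixed $\omega \in G$ I would introduce the translation operator
$$\tau_{\omega} \colon L_{p}(G) \to L_{p}(G), \quad \tau_{\omega} f := f(\omega\, \cdot\,).$$
Since $m$ is translation invariant, $\tau_{\omega}$ is a well-defined onto isometry of $L_{p}(G)$ for every $1 \le p \le \infty$ (in the endpoint case $p = \infty$ one uses that translation sends null sets to null sets).

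Next I would compute the effect of $\tau_{\omega}$ on Fourier coefficients. Substituting $y = \omega x$ and using $|\gamma(\omega)| = 1$ (so that $\gamma(\omega^{-1}) = \overline{\gamma(\omega)}$) gives, for every $\gamma \in \widehat{G}$,
$$\widehat{\tau_{\omega} f}(\gamma) = \int_{G} f(\omega x)\, \overline{\gamma(x)}\, dx = \gamma(\omega)\, \widehat{f}(\gamma).$$
In particular $\widehat{\tau_{\omega}f}$ has the same support as $\widehat{f}$, so $\tau_{\omega}$ restricts to an onto isometry of $H_{p}^{\lambda}(G)$. Moreover, recalling the convention $\omega(x) = h_{x}(\omega)$ (which makes sense for $x = \lambda_{n}$ since $(G,\beta)$ is a $\lambda$-Dirichlet group), the coefficient at $\gamma = h_{\lambda_{n}}$ equals $\omega(\lambda_{n})\, a_{n}$, whence $\Bcal(\tau_{\omega} f) = D^{\omega}$.

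Finally I would assemble the equivalence. If $D \in \Hcal_{p}(\lambda)$ with associated $f$, then $\tau_{\omega} f \in H_{p}^{\lambda}(G)$ exhibits $D^{\omega} = \Bcal(\tau_{\omega}f) \in \Hcal_{p}(\lambda)$ and $\|D^{\omega}\|_{p} = \|\tau_{\omega} f\|_{p} = \|f\|_{p} = \|D\|_{p}$. For the converse I would note that $\tau_{\omega}^{-1} = \tau_{\omega^{-1}}$ and, correspondingly, $(D^{\omega})^{\omega^{-1}} = D$, since $\omega(\lambda_{n})\,\omega^{-1}(\lambda_{n}) = h_{\lambda_{n}}(\omega)\,h_{\lambda_{n}}(\omega^{-1}) = 1$; applying the forward direction to $D^{\omega}$ and the group element $\omega^{-1}$ then yields $D \in \Hcal_{p}(\lambda)$ with the same norm identity. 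There is no serious obstacle here, matching the authors' remark that the result is immediate; the only points demanding care are the bookkeeping of the character-value convention $\omega(x) = h_{x}(\omega)$ in the coefficient computation and the verification that $\tau_{\omega}$ stays isometric in the endpoint case $p = \infty$.
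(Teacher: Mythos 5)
Your proposal is correct and fleshes out exactly the argument the paper leaves implicit: the paper gives no formal proof, stating only that the result is ``immediate'' from the translation invariance of the Haar measure and the definition of $\Hcal_{p}(\lambda)$, which is precisely your $\tau_{\omega}$ argument. The coefficient computation $\widehat{\tau_{\omega}f}(\gamma)=\gamma(\omega)\widehat{f}(\gamma)$ and the use of $\tau_{\omega^{-1}}$ for the converse are the right bookkeeping, and your care about the $p=\infty$ endpoint is a sensible (if minor) addition.
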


The  following corollary applies the case $p=\infty$ to polynomials.

\begin{Coro} \label{isometrytrans2}  Let $(G, \beta)$ be a $\lambda$-Dirichlet group. Then for all $\omega\in G$ and $a_{1}, \ldots, a_{N} \in \C$
\begin{equation*}
\sup_{s\in  [Re>0]}\Big|  \sum_{n=1}^{N} a_{n}e^{-\lambda_{n}s}\Big| =\sup_{t\in \R} \Big|  \sum_{n=1}^{N} a_{n}e^{-t\lambda_{n}i}\Big|=\sup_{t\in \R} \Big|  \sum_{n=1}^{N} a_{n}\omega(\lambda_{n})e^{-t\lambda_{n}i}\Big|\,.
\end{equation*}
\end{Coro}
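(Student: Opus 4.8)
The plan is to establish the two equalities separately, handling the genuinely analytic first one by the maximum principle and the second one by harmonic analysis on $G$. Throughout write $D(s)=\sum_{n=1}^N a_n e^{-\lambda_n s}$, which is an entire function, and note that since every $\lambda_n\ge 0$ we have $|e^{-\lambda_n s}|=e^{-\lambda_n \re(s)}\le 1$ on $[Re\ge 0]$, so $D$ is bounded there (by $\sum_n|a_n|$) and continuous up to the boundary $[Re=0]$.

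For the first equality $\sup_{[Re>0]}|D|=\sup_{t\in\R}|\sum a_n e^{-i\lambda_n t}|$, one inequality is immediate: each boundary value $|D(it)|$ is a limit of interior values along horizontal approach, so $\sup_t|D(it)|\le \sup_{[Re>0]}|D|$. The reverse inequality is the maximum modulus principle in Phragm\'en--Lindel\"of form for a bounded holomorphic function on a half plane. I would run the standard argument: for $\varepsilon>0$ the function $s\mapsto D(s)e^{-\varepsilon s}$ is still bounded and holomorphic on $[Re>0]$, is dominated by $\sup_t|D(it)|$ on $[Re=0]$, and tends to $0$ as $\re(s)\to\infty$; applying the maximum principle and then letting $\varepsilon\to 0$ yields $\sup_{[Re>0]}|D|\le\sup_t|D(it)|$. (Equivalently one may transplant conformally to $H_\infty(\D)$ and invoke Fatou boundary values.)

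For the second equality I would reduce everything to the $L_\infty(G)$-norm of trigonometric polynomials on the $\lambda$-Dirichlet group. The key observation is that for any polynomial $P=\sum_n c_n h_{x_n}\in C(G)$,
\[
\|P\|_{L_\infty(G)}=\sup_{\omega\in G}|P(\omega)|=\sup_{t\in\R}|P(\beta(t))|=\sup_{t\in\R}\Big|\sum_n c_n e^{-ix_nt}\Big|,
\]
where the middle step uses continuity of $P$ together with the density of $\beta(\R)$ in $G$, and the last step uses the defining relation $h_{x_n}\circ\beta=e^{-ix_n\cdot}$ (Proposition~\ref{hx}). Applying this to $f=\sum a_n h_{\lambda_n}$ identifies the middle expression in the corollary with $\|D\|_\infty=\|f\|_{L_\infty(G)}$, while applying it to the translate $f^\omega=\sum a_n \omega(\lambda_n)h_{\lambda_n}$ identifies the right-hand expression with $\|D^\omega\|_\infty$. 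It then remains to invoke Proposition~\ref{translationII} in the case $p=\infty$, which gives $\|D^\omega\|_\infty=\|D\|_\infty$; concretely, since $h_{\lambda_n}$ is multiplicative one checks $f^\omega(\eta)=f(\omega\eta)$, so $f^\omega$ is a translate of $f$ and translation invariance of the Haar measure gives $\|f^\omega\|_{L_\infty(G)}=\|f\|_{L_\infty(G)}$. Chaining the three identifications closes the equalities.

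I expect the only genuinely delicate point to be the first equality: because the half plane is unbounded, a naive application of the maximum principle on a growing rectangle has to control the far sides, and it is precisely the auxiliary factor $e^{-\varepsilon s}$ (or the conformal transplantation to the disk) that makes the Phragm\'en--Lindel\"of step rigorous. The second equality, by contrast, is a routine density-plus-continuity argument once the norm identification above is in place, and it is exactly here that Proposition~\ref{translationII} enters, so this corollary is essentially its specialization to polynomials with $p=\infty$.
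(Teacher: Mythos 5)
Your proposal follows the paper's proof exactly: the paper disposes of the first equality with the words ``a standard maximum modulus principle'' and obtains the second by applying Proposition~\ref{translationII} (with $p=\infty$) to polynomials in $Pol_{\lambda}(G)$ and restricting to $\R$ via density of $\beta(\R)$, which is precisely your chain $\sup_{t}|P(\beta(t))|=\sup_{\omega\in G}|P(\omega)|=\|f\|_{L_\infty(G)}$ combined with translation invariance of the Haar measure. One correction to the only step you flag as delicate: the auxiliary factor $e^{-\varepsilon s}$ does not by itself make the rectangle argument close, since on the horizontal sides $\{\sigma\pm iT: 0\le\sigma\le A\}$ it only gives the bound $\|D\|_\infty e^{-\varepsilon\sigma}\le\|D\|_\infty$, which need not be $\le\sup_t|D(it)|$; you need an auxiliary function decaying in \emph{all} directions of the closed half plane, e.g.\ $(1+\varepsilon s)^{-1}$ or $e^{-\varepsilon s^{\alpha}}$ with $0<\alpha<1$, or else the conformal transplantation to $H_\infty(\D)$ that you mention parenthetically, which is rigorous as stated since the point at infinity corresponds to a single (null) boundary point of $\T$.
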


\begin{proof}
Applying Proposition \ref{translationII} to polynomials in $Pol_{\lambda}(G)$ (and 'restricting' them to $\R$) we obtain the second equality. The first equality  follows from a standard maximum modulus  principle.
\end{proof}

Let us prove the analog of  Proposition \ref{translationII}  for $\mathcal{D}_\infty(\lambda)$.

\begin{Prop} \label{rotation}
Let $(G, \beta)$ be a $\lambda$-Dirichlet group,   $D:=\sum a_{n} e^{-\lambda_{n}s} \in \mathcal{D}_{\infty}(\lambda)$,
and assume that $\mathcal{D}_{\infty}(\lambda)$ is complete.
Then for all
$\omega \in G$ we have that $D\in \mathcal{D}_\infty(\lambda)$ if and only if
$D^{\omega}\in \mathcal{D}_\infty(\lambda)$, and in this case
$\|D^{\omega}\|_{\infty}=\|D\|_{\infty}$.
\end{Prop}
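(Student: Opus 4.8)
The plan is to reduce to the action of genuine vertical translations, where everything is transparent, and then to bootstrap to an arbitrary $\omega \in G$ by density of $\beta(\R)$ together with the completeness hypothesis. First note the symmetry: since $\omega^{-1}(\lambda_n) = \overline{\omega(\lambda_n)}$ and $(D^{\omega})^{\omega^{-1}} = D$, it suffices to prove the single implication that $D \in \mathcal{D}_{\infty}(\lambda)$ forces $D^{\omega} \in \mathcal{D}_{\infty}(\lambda)$ with $\|D^{\omega}\|_{\infty} \le \|D\|_{\infty}$; applying this to the pair $(D^{\omega}, \omega^{-1})$ then yields the reverse inequality and the stated equivalence. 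For $\omega = \beta(\tau)$ the claim is immediate: $D^{\beta(\tau)} = D_{i\tau}$ is the vertical translation whose holomorphic extension is $s \mapsto f(s+i\tau)$, and since $[Re>0]$ is invariant under vertical shifts, $D_{i\tau} \in \mathcal{D}_{\infty}(\lambda)$ with $\|D_{i\tau}\|_{\infty} = \|D\|_{\infty}$.

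Next I would bring in the typical means. Since $\mathcal{D}_{\infty}(\lambda) \subseteq \mathcal{D}^{ext}_{\infty}(\lambda)$, Proposition~\ref{typicalmeans} gives that $R_{x}(D) \to f$ uniformly on $[Re>\varepsilon]$ for every $\varepsilon>0$. The two features I want to exploit are that $R_{x}(D^{\omega}) = R_{x}(D)^{\omega}$ is again a $\lambda$-Dirichlet polynomial and that, by Corollary~\ref{isometrytrans2}, $\sup_{[Re>0]}|R_{x}(D^{\omega})| = \sup_{[Re>0]}|R_{x}(D)|$. I would also record that the typical means do not increase the sup-norm: on each line $Re = u > 0$ the polynomial $R_{x}(D)(u + i\,\cdot\,)$ is the convolution of the bounded almost periodic function $f(u + i\,\cdot\,)$ (a uniform limit on that line of the polynomials $R_{x}(D)(u + i\,\cdot\,)$) with the Fej\'er kernel $K_{x}$ on $\R$, a positive $L_{1}$-kernel of mass one with $\widehat{K_{x}} = (1-|\cdot|/x)_{+}$, whence $\sup_{[Re>0]}|R_{x}(D)| \le \|f\|_{\infty} = \|D\|_{\infty}$ and therefore $\sup_{[Re>0]}|R_{x}(D^{\omega})| \le \|D\|_{\infty}$ for all $x$ and all $\omega$.

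Now fix a net $(\tau_{j})$ with $\beta(\tau_{j}) \to \omega$ in $G$. Continuity of each character gives $\omega(\lambda_{n}) = h_{\lambda_{n}}(\omega) = \lim_{j} e^{-i\lambda_{n} \tau_{j}}$, so the Dirichlet coefficients of the genuine translations $D_{i\tau_{j}}$ converge entrywise to those of $D^{\omega}$; combined with the bound $|a_{n}| \le \|D\|_{\infty}$ and the uniform bound from the previous step, a normal-family (Montel) argument produces a holomorphic limit $g_{\omega}$ on $[Re>0]$, bounded by $\|D\|_{\infty}$, whose Dirichlet coefficients are exactly $a_{n}\omega(\lambda_{n})$. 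This already places $D^{\omega}$ in $\mathcal{D}^{ext}_{\infty}(\lambda)$ with $\|D^{\omega}\|_{\infty} \le \|D\|_{\infty}$.

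The main obstacle is the final upgrade from $\mathcal{D}^{ext}_{\infty}(\lambda)$ to $\mathcal{D}_{\infty}(\lambda)$: I must show that $D^{\omega}$ genuinely converges on the whole open half-plane (equivalently, that its partial sums, not merely its typical means, converge there), and this for \emph{every} $\omega$ rather than almost every $\omega$ as in Helson's theorem. All the estimates above are only uniform on the interior half-planes $[Re>\varepsilon]$, and uniform control up to the boundary $Re = 0$ is precisely what typical means fail to provide without a hypothesis like Bohr's theorem. This is where completeness of $\mathcal{D}_{\infty}(\lambda)$ is indispensable: since $\mathcal{D}_{\infty}(\lambda)$ is then a closed subspace of $\mathcal{D}^{ext}_{\infty}(\lambda)$, it suffices to realize $D^{\omega}$ as a sup-norm limit of the genuine elements $D_{i\tau_{j}} \in \mathcal{D}_{\infty}(\lambda)$, each of norm $\|D\|_{\infty}$, and the delicate point that I expect to cost the most work is establishing Cauchyness of this net in the full half-plane norm, i.e. transferring the boundary behaviour along the dense orbit $\beta(\R)$ to the limit point $\omega$. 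Once this is secured, $D^{\omega} \in \mathcal{D}_{\infty}(\lambda)$ and $\|D^{\omega}\|_{\infty} = \|D\|_{\infty}$, and the symmetry reduction from the first paragraph finishes the proof.
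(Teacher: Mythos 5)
Your reduction to a single implication, your use of the typical means, and your appeal to Corollary~\ref{isometrytrans2} are all in the right spirit, but the step you defer to the end --- ``establishing Cauchyness of the net $(D_{i\tau_{j}})_{j}$ in the full half-plane norm'' --- is not a delicate point that merely costs work: it is false in general, so the route as proposed is a dead end. Cauchyness of $(D_{i\tau_{j}})$ in $\mathcal{D}_{\infty}(\lambda)$ along a net with $\beta(\tau_{j})\to\omega$ would mean that $f$ is uniformly almost periodic up to the boundary of the half-plane, which is essentially the uniform convergence on all of $[Re>0]$ that Bohr's theorem would provide and which is not assumed here. Concretely, for $\lambda=(n)$ one has $\mathcal{D}_{\infty}((n))=H_{\infty}(\D)$ via $z=e^{-s}$, the vertical translation $D_{i\tau}$ corresponds to $g(e^{-i\tau}z)$, and $\|g(e^{-i\tau}\cdot)-g(e^{-i\tau'}\cdot)\|_{H_{\infty}(\D)}\to 0$ as $\tau\to\tau'$ fails for, e.g., an infinite Blaschke product $g$. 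A smaller but genuine secondary gap: your normal-families step only produces a bounded holomorphic function with the right coefficients; to place $D^{\omega}$ in $\mathcal{D}^{ext}_{\infty}(\lambda)$ you must also know that $D^{\omega}$ converges \emph{somewhere}, which does not follow from boundedness of the coefficients when $L(\lambda)=\infty$.

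The paper's proof sidesteps the boundary issue entirely by never approximating $D^{\omega}$ itself in the $\mathcal{D}_{\infty}$-norm. Fix $\varepsilon>0$. By Proposition~\ref{typicalmeans} the typical means $R_{x}(D)$ converge to $f$ uniformly on $[Re>\varepsilon]$, i.e.\ the polynomials $R_{x}(D)_{\varepsilon}$ form a Cauchy net in $\mathcal{D}_{\infty}(\lambda)$. By Corollary~\ref{isometrytrans2} the rotation $P\mapsto P^{\omega}$ is isometric on polynomials for \emph{every} $\omega\in G$ (no approximation of $\omega$ by $\beta(\tau_{j})$ is needed), so the rotated polynomials $\left(R_{x}(D)_{\varepsilon}\right)^{\omega}$ are again Cauchy in $\mathcal{D}_{\infty}(\lambda)$; completeness gives a limit, which by coefficientwise convergence is $D^{\omega}_{\varepsilon}$, with $\|D^{\omega}_{\varepsilon}\|_{\infty}=\|D_{\varepsilon}\|_{\infty}\le\|D\|_{\infty}$. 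The point you are missing is that this already finishes the proof: the statement that $D^{\omega}_{\varepsilon}\in\mathcal{D}_{\infty}(\lambda)$ for all $\varepsilon>0$ with $\sup_{\varepsilon>0}\|D^{\omega}_{\varepsilon}\|_{\infty}\le\|D\|_{\infty}$ is \emph{equivalent} to $D^{\omega}\in\mathcal{D}_{\infty}(\lambda)$ with $\|D^{\omega}\|_{\infty}\le\|D\|_{\infty}$, because membership in $\mathcal{D}_{\infty}(\lambda)$ only asks for convergence and boundedness on the open half-plane $[Re>0]=\bigcup_{\varepsilon>0}[Re>\varepsilon]$. Your symmetry reduction via $\omega^{-1}$ then upgrades the inequality to the equality of norms.
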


\begin{proof}
Indeed,  by Proposition~\ref{typicalmeans}
the Dirichlet series $D=\sum a_{n} e^{-\lambda_{n}s} \in \mathcal{D}_{\infty}(\lambda)$ is approximated by typical means. Hence, given $\omega \in G$ and $\varepsilon >0$, by Corollary~\ref{isometrytrans2} the sequence  $$\left(\sum_{\lambda_{n}<x} a_{n}\left(1-\frac{\lambda_{n}}{x}\right)\omega(\lambda_{n})e^{-\varepsilon \lambda_{n}} e^{-\lambda_{n}s}\right)_{x\ge 0}$$
is a Cauchy sequence in $\mathcal{D}_\infty(\lambda)$. Then
 $D^{\omega}_{\varepsilon} \in \mathcal{D}_{\infty}(\lambda)$ and  $\|D^{\omega}_{\varepsilon}\|_{\infty}=\|D_{\varepsilon}\|_{\infty}$ for all $\omega \in G$ and $\varepsilon >0$. This
 immediately gives $D^{\omega} \in \mathcal{D}_{\infty}(\lambda)$  and $\|D^{\omega}\|_{\infty}=\|D\|_{\infty}$ for all $\omega \in G$.
\end{proof}

In the rest of this section we  justify  the chosen name 'vertical limit'.

\begin{Prop} Let $(G, \beta)$ be a $\lambda$-Dirichlet group, and $D=\sum a_{n}e^{-\lambda_{n}s}$ a $\lambda$-Dirichlet series with $\sigma_{a}(D)\le 0$. Then
\begin{itemize}
\item[(1)]
For every  $\omega \in G$ there is a sequence $(\tau_{k})_{k} \subset \R$ such that $D_{i\tau_{k}}$ converges to $D^{\omega}$ uniformly on $[Re>\varepsilon]$ for all $\varepsilon>0$.

\item[(2)]
 Assume conversely that for  $(\tau_{k})_{k} \subset \R$ the  vertical translations $D_{i\tau_k}$ converge
 uniformly on $[Re>\varepsilon]$ for every  $\varepsilon>0$
 to a holomorphic function $f$ on $[Re>0]$. Then there is $\omega \in G$ such that
    $f(s)= D^{\omega}(s)$
    for all $s \in [Re>0]$\,.
\end{itemize}
\end{Prop}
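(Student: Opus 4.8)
The plan is to exploit the identity $D_{i\tau}=D^{\beta(\tau)}$ already recorded above (it holds because $h_{\lambda_n}(\beta(\tau))=e^{-i\lambda_n\tau}$), which shows that the vertical translations of $D$ are precisely the vertical limits $D^\omega$ with $\omega$ running through the dense subgroup $\beta(\R)\subset G$. Both statements then reduce to one analytic observation: if $\omega_k,\omega\in G$ (or a net in place of the sequence) satisfy $\omega_k(\lambda_n)\to\omega(\lambda_n)$ for every fixed $n$, then $D^{\omega_k}\to D^\omega$ uniformly on $[Re>\varepsilon]$ for each $\varepsilon>0$. This is where the hypothesis $\sigma_a(D)\le 0$ enters, since it gives $\sum_n|a_n|e^{-\lambda_n\varepsilon}<\infty$, so that for $Re(s)>\varepsilon$
\[
\big|D^{\omega_k}(s)-D^\omega(s)\big|\le \sum_{n}|a_n|\,\big|\omega_k(\lambda_n)-\omega(\lambda_n)\big|\,e^{-\lambda_n\varepsilon}\,.
\]
Splitting into a finite head $n\le N$ and a tail $n>N$, the tail is bounded by $\sum_{n>N}2|a_n|e^{-\lambda_n\varepsilon}$ (using $|\omega(\lambda_n)|=1$) uniformly in $k$, hence small for $N$ large, while the head is a finite sum of termwise convergent expressions and thus tends to $0$. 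Because this argument only uses convergence of finitely many coordinates at a time, it is equally valid for nets and for sequences.

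For (1), fix $\omega\in G$; the task is to produce an honest \emph{sequence} $(\tau_k)$, and here the difficulty is that $G$ need not be metrizable (for instance $G=\overline{\R}$), so the density of $\beta(\R)$ does not by itself yield sequences. I would bypass this by passing to the countably many relevant characters: the map $\Lambda\colon G\to\T^{\N}$, $\omega\mapsto(h_{\lambda_n}(\omega))_n$, is a continuous homomorphism into the metrizable group $\T^{\N}$. Its image $\Lambda(G)$ is compact, hence closed, and $\Lambda(\beta(\R))$ is dense in $\Lambda(G)$ since $\beta(\R)$ is dense in $G$ and $\Lambda$ is continuous. Thus $\Lambda(\omega)$ is the limit of a sequence $\Lambda(\beta(\tau_k))$, that is $\beta(\tau_k)(\lambda_n)\to\omega(\lambda_n)$ for every $n$. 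The analytic observation then gives $D_{i\tau_k}=D^{\beta(\tau_k)}\to D^\omega$ uniformly on $[Re>\varepsilon]$, as required.

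For (2), suppose $D_{i\tau_k}\to f$ uniformly on every $[Re>\varepsilon]$. Since $G$ is compact, the sequence $(\beta(\tau_k))_k$ admits a subnet $(\beta(\tau_{k_\alpha}))_\alpha$ converging to some $\omega\in G$; this is the step that actually produces the point $\omega$. By continuity of each $h_{\lambda_n}$ we obtain $\beta(\tau_{k_\alpha})(\lambda_n)=h_{\lambda_n}(\beta(\tau_{k_\alpha}))\to\omega(\lambda_n)$ for every $n$, so the net version of the analytic observation yields $D_{i\tau_{k_\alpha}}=D^{\beta(\tau_{k_\alpha})}\to D^\omega$ uniformly on each $[Re>\varepsilon]$ (note $D^\omega$ converges absolutely on $[Re>0]$ because $\sigma_a(D^\omega)=\sigma_a(D)\le 0$). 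On the other hand, a subnet of the convergent sequence $(D_{i\tau_k})$ still converges to $f$, and uniform limits are unique, whence $f=D^\omega$ on $[Re>\varepsilon]$ for all $\varepsilon>0$, i.e.\ on $[Re>0]$. The main obstacle throughout is the possible non-metrizability of $G$: it forces the detour through $\T^{\N}$ in (1) and the use of subnets rather than subsequences in (2), whereas the analytic core is simply the uniform-tail estimate furnished by $\sigma_a(D)\le 0$.
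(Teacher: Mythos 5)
Your proposal is correct and follows essentially the same route as the paper: the identity $D_{i\tau}=D^{\beta(\tau)}$, the uniform tail estimate supplied by $\sigma_a(D)\le 0$ together with a finite head, density of $\beta(\R)$ for (1), and compactness of $G$ to produce $\omega$ in (2). The only cosmetic differences are that the paper obtains the sequence in (1) directly from the compact-open topology on $G=\widehat{(\widehat{G})_d}$ (the same finite-coordinate neighborhoods your map into $\T^{\N}$ encodes), and in (2) it extracts an honest subsequence from the cluster point $\omega$ using the countably many characters $h_{\lambda_j}$, where you work with a subnet instead.
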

\begin{proof}
$(1)$ By Pontryagin's duality theorem we have $G=\widehat{(\widehat{G},d)}$ (as topological groups), where the right group carries the compact open topology. Hence in the following we interpret  $\omega \in G$ as a character on $\widehat{G}$. Since $\R$ is dense in $G$, by  definition of the compact open topology, for every $N \in \N$ there is $\tau_{N} \in \R$ such that
$$\max_{1\le j \le N} |e^{-i\lambda_{j}\tau_{N}}-\omega(\lambda_{j})|\le \frac{1}{N}.$$
Hence the sequence $(e^{-i\lambda_{j}\tau_{N}})_{j}$ converges to $(\omega(\lambda_{j}))_{j}$ in $\T^{\infty}$ whenever $N$ tends to $\infty$.
For each $N$ we define $D_{i\tau_{N}}(s):=\sum a_{j}e^{-i\lambda_{j}\tau_{N}} e^{-\lambda_{j}s}$ and take $\varepsilon>0$. Then  for all $s = \sigma+it \in [Re>\varepsilon]$ and $M,N \in \N$ we obtain
\begin{align*}
|D_{i\tau_{N}}(s)-D^{\omega}(s| & \le \sum_{n=1}^{\infty}|a_{n}| e^{-\lambda_{n}\varepsilon} |e^{-i\lambda_{n}\tau_{N}}-\omega(\lambda_{n})|
\\ &\le 2\sum_{n=M}^{\infty} |a_{n}|e^{-\lambda_{n}\varepsilon}  + \max_{1 \leq n \leq M} |a_{n}| e^{-\lambda_{n}\varepsilon}\sum_{n=1}^{M}  |e^{-i\lambda_{n}\tau_{N}}-\omega(\lambda_{n})|.
\end{align*}
Since $\sigma_{a}(D)\le 0$, we may choose $M$ and $N$ large enough to get the desired conclusion.
\\
$(2)$ We claim that there is $\omega \in G$ and a subsequence $(\tau_{n_{k}})_{k}$ such that $(e^{-i \tau_{n_{k}}\lambda_{j}})_{j}$ converges to $(\omega(\lambda_{j}))_{j}$ in $\T^{\infty}$ whenever  $k$ tends to $\infty$.
Then we may argue as in the proof of  $(1)$ to show that $D_{i\tau_{n_{k}}}$ converges to $D^{\omega}$ uniformly on $[Re>\varepsilon]$ for all $\varepsilon>0$, and consequently  $f=D^{\omega}$ on $[Re>0]$. For $m \in \N$
we define  $T_{m}:=\overline{\{\beta(\tau_{n}) \mid n \ge m\}}$, where the closure is taken in $G$. Then $\bigcap_{m \in A} T_{m}$ for all finite subsets $A \subset \N$, and since  $G$ is compact, we find some
$ \omega \in \bigcap_{m \in \N} T_{m} \ne \emptyset$. Consider now the following zero neighborhoods in $G$
 $$U_{N}:=\left\{ \eta \in G \mid \max_{1\le j \le N} |\eta(\lambda_{j})-1| \le \frac{1}{N} \right\}, ~ N \in \N.$$ So for $N=1$ there is $\tau_{k_{1}}$ such that $\beta(\tau_{k_{1}}) \in \omega +U_{1}$, since $\omega \in T_{1}$. Then for $N=2$ there is $\tau_{k_{2}}$, $k_{2}\ge k_{1}$, such that $\beta(\tau_{k_{2}}) \in \omega+ U_{2}$, since $\omega \in T_{k_{1}}$. Inductively we get a subsequence $(\tau_{n_{k}})_{k}$ such that
$$\max_{1\le j \le k} |e^{-i\tau_{n_{k}}\lambda_{j}}-\omega(\lambda_{j})| \le \frac{1}{k}$$ for all $k \in \N$. This proves the claim.
\end{proof}

\subsection{$\pmb{\mathcal{H}_{p}(\lambda)}$ equals $\pmb{\mathcal{H}^+_{p}(\lambda)}$ } \label{hp=hp+}

The next result shows that a   Dirichlet series  $D:=\sum a_{n} e^{-\lambda_{n}s}$ belongs to $ \Hcal_{p}(\lambda)$ if and only if all its translations $D_z, z \in [Re >0]$ do with  uniformly bounded norms.
For ordinary $\mathcal{H}_p$'s this was first proved in \cite{AntonioDefant} (see also \cite[Theorem 11.21]{Defant}).

\begin{Theo} \label{translationI} Let $1\le p < \infty$ and  $D:=\sum a_{n} e^{-\lambda_{n}s}$. Then $D \in \Hcal_{p}(\lambda)$ if and only if $D_{z}\in \Hcal_{p}(\lambda)$ for all $z \in [Re>0]$ and $\sup_{z \in [Re>0]} \|D_{z}\|_{p} < \infty$. Moreover, in this case $\sup_{z \in [Re>0]} \|D_{z}\|_{p} =\|D\|_{p}$.
\end{Theo}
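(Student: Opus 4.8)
The plan is to fix a $\lambda$-Dirichlet group $(G,\beta)$ and transport the whole statement through the Bohr map $\Bcal\colon H_p^{\lambda}(G)\to \Hcal_p(\lambda)$, so that a series $D=\sum a_n e^{-\lambda_n s}\in\Hcal_p(\lambda)$ corresponds to the unique $f\in H_p^{\lambda}(G)$ with $\widehat{f}(h_{\lambda_n})=a_n$ and $\|D\|_p=\|f\|_p$. The guiding idea is to realize each translate $D_z$, $z=u+iv$ with $u>0$, as the $\Bcal$-image of a concrete modification of $f$: the horizontal part $u$ should appear as convolution with the Poisson measure $p_u$, and the vertical part $iv$ as a rotation by $\beta(v)\in G$. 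Indeed, since $\widehat{f\ast p_u}(h_{\lambda_n})=\widehat{f}(h_{\lambda_n})e^{-u\lambda_n}=a_n e^{-u\lambda_n}$ and $\lambda_n\ge 0$, the function $f\ast p_u$ again lies in $H_p^{\lambda}(G)$ and $\Bcal(f\ast p_u)=D_u$; for the vertical part one uses $h_{\lambda_n}(\beta(v))=e^{-i\lambda_n v}$ to see $D_{u+iv}=(D_u)^{\beta(v)}$.

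For the forward direction I would argue as follows. As $\|p_u\|=1$, Young's inequality gives $\|D_u\|_p=\|f\ast p_u\|_p\le\|f\|_p=\|D\|_p$, and Proposition~\ref{translationII} (rotation by a group element is an isometry of $\Hcal_p(\lambda)$) yields $\|D_z\|_p=\|D_u\|_p\le\|D\|_p$ for every $z\in[Re>0]$; hence $\sup_z\|D_z\|_p\le\|D\|_p$. The reverse inequality comes from letting $u\to 0^+$: since $f\ast p_u\to f$ in $L_p(G)$ (Lemma~\ref{limituto0}, proved for polynomials by $e^{-u\lambda_n}\to 1$ and extended by the uniform bound $\|f\ast p_u\|_p\le\|f\|_p$ and density of polynomials), we get $\|D_u\|_p\to\|D\|_p$, which forces equality $\sup_z\|D_z\|_p=\|D\|_p$.

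For the converse, assume $D_z\in\Hcal_p(\lambda)$ for all $z$ and $M:=\sup_z\|D_z\|_p<\infty$, and write $f_u\in H_p^{\lambda}(G)$ for the function with $\Bcal(f_u)=D_u$, so $\|f_u\|_p\le M$ and $\widehat{f_u}(h_{\lambda_n})=a_n e^{-u\lambda_n}$. When $1<p<\infty$ the space $L_p(G)$ is reflexive, so a sequence $u_k\downarrow 0$ produces a weakly convergent subsequence $f_{u_k}\rightharpoonup f$ with $\|f\|_p\le M$; as $H_p^{\lambda}(G)$ is norm-closed, hence weakly closed, one has $f\in H_p^{\lambda}(G)$, and testing against $\overline{h_{\lambda_n}}\in L_{p'}(G)$ gives $\widehat{f}(h_{\lambda_n})=\lim_k a_n e^{-u_k\lambda_n}=a_n$. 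Thus $D=\Bcal(f)\in\Hcal_p(\lambda)$, and the norm identity then follows from the forward direction.

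The hard part will be $p=1$, since $L_1(G)$ is not reflexive and the weak limit above escapes into the space of measures. Viewing $f_u$ through the isometric embedding $L_1(G)\hookrightarrow M(G)=C(G)'$ and using weak-$*$ compactness of the ball of $M(G)$, I would extract as $u\to 0^+$ a limit $\mu\in M(G)$ with $\|\mu\|\le M$, $\supp\widehat{\mu}\subset\{h_{\lambda_n}\}$ and $\widehat{\mu}(h_{\lambda_n})=a_n$; that is, an analytic measure on $G$ realizing $D$. The genuine obstacle is then to promote $\mu$ to an absolutely continuous measure, i.e.\ to an element of $H_1^{\lambda}(G)$, so that $D\in\Hcal_1(\lambda)$. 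This is exactly an F.\ \&\ M.\ Riesz / brothers-Riesz phenomenon: the one-sided spectral condition $\supp\widehat{\mu}\subset\{h_{\lambda_n}\}\subset[0,\infty)$ should force absolute continuity. I would close this step either by invoking the generalized F.\ \&\ M.\ Riesz theorem for a group with ordered dual (applied to $\overline{\R}$, whose dual is $\R$) or, staying inside the paper, by appealing to the brothers-Riesz identification of $\Hcal_1(\lambda)$ with analytic measures (Theorem~\ref{brotherRiesz}); establishing this absolute continuity is the crux on which the $p=1$ case rests.
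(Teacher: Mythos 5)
Your proof is correct; the easy direction coincides with the paper's (Poisson convolution $f*p_u$ for the horizontal part, Proposition \ref{translationII} for the vertical part, Lemma \ref{limituto0} for the limit $u\to 0$), but your argument for the converse takes a genuinely different route. The paper embeds $D$ into the bounded holomorphic $\Hcal_p(\lambda)$-valued function $F(z)=\sum (a_n e^{-\lambda_n s})e^{-\lambda_n z}$ on $[Re>0]$ via Lemma \ref{embedding}, invokes the analytic Radon--Nikodym property of the closed subspace $\Hcal_p(\lambda)\subset L_p(G)$ to obtain non-tangential boundary limits $\lim_{\varepsilon\to 0}F(\varepsilon+it)=D_{it}$ in $\Hcal_p(\lambda)$ for almost every $t$, and finishes with the rotation invariance of Proposition \ref{translationII}; this treats all $1\le p<\infty$ uniformly, at the cost of vector-valued holomorphy and the ARNP (and of Lemma \ref{embedding}, which formally carries a completeness hypothesis on $\mathcal{D}_\infty(\lambda)$ that is absent from the theorem's statement). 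Your weak-compactness argument for $1<p<\infty$ is more elementary and self-contained: reflexivity of $L_p(G)$, weak closedness of the norm-closed convex subspace $H_p^{\lambda}(G)$, and weak lower semicontinuity of the norm do all the work. For $p=1$ your reduction to an analytic measure $\mu\in M_{\lambda}(G)$ with $\widehat{\mu}(h_{\lambda_n})=a_n$ and $\|\mu\|\le M$ is sound (note only that $C(G)$ need not be separable, e.g.\ for $G=\overline{\R}$, so the weak-$*$ extraction should be along a subnet rather than a subsequence), and the promotion of $\mu$ to an $L_1$-density is exactly Theorem \ref{brotherRiesz}. One caveat: you must rely on the paper's primary proof of Theorem \ref{brotherRiesz}, via Doss's theorem, which is independent of Theorem \ref{translationI} and needs no hypotheses on $\lambda$; the alternative proof sketched in Section \ref{brotherRieszchapter} runs through the Montel Theorem \ref{montel}, whose proof itself invokes Theorem \ref{translationI}, and would render your argument circular. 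With that choice of reference your proof is complete --- arguably cleaner for $1<p<\infty$, while for $p=1$ it trades the ARNP for an F.\ and M.\ Riesz-type input of comparable depth.
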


We will give two important applications of this description of $\Hcal_{p}(\lambda)$ in the coming two
sections.

\medskip

For the  proof it is    convenient to define the Banach space $\mathcal{H}^+_{p}(\lambda)$  of all $\lambda$-Dirichlet series  such that
$\|D\|_{\mathcal{H}^+_{p}(\lambda)}:=\sup_{z \in [Re>0]} \|D_{z}\|_{p} < \infty$, and then the preceding equivalence in short tells  that $\mathcal{H}_{p}(\lambda)=\mathcal{H}^+_{p}(\lambda)$ holds isometrically.

\medskip

We show each of the implications of the preceding theorem separately.
The 'if part' is based on a lemma which relates horizontal translations of Dirichlet series  $D \in \Hcal_{p}(\lambda)$ (i.e. the Dirichlet coefficients of $D$ are Fourier coefficients of a function $f \in H_{p}^{\lambda}(G)$) with the convolution of $f$ with the Poisson measure $p_u$ on $G$
(see Lemma~\ref{measuresembedd}).

\begin{Lemm}\label{limituto0} Let $(G,\beta)$ be a $\lambda$-Dirichlet group and  $1\le p \le \infty$. Moreover,
let $f \in H_p^{\lambda}(G)$ and  $D \in \Hcal_{p}(\lambda)$ with $\Bcal(f)=D$. Then for each $u>0$, we have that $p_{u}*f \in H_p^{\lambda}(G)$, $D_u \in \Hcal_{p}(\lambda)$ with
 $\Bcal(p_{u}*f)=D_{u}$,  and $\lim_{u\to 0} D_{u}=D$ in $\Hcal_{p}(\lambda)$ (for $p=\infty$ with respect to the weak star topology
generated  by the duality of $L_\infty(G)$ and $L_1(G)$).
\end{Lemm}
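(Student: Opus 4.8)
The plan is to push the whole statement through the Bohr isometry $\Bcal\colon H_p^{\lambda}(G)\to\Hcal_p(\lambda)$, so that it collapses to a single approximate-identity assertion for the Poisson measures $p_u$ on the group side. Two of the three claims are then essentially bookkeeping with the multiplier formula $\widehat{p_u*f}(h_x)=\widehat f(h_x)e^{-u|x|}$ recorded after Lemma~\ref{measuresembedd}, while the genuine content lies in the convergence $p_u*f\to f$ as $u\to 0$.

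First I would settle membership together with the identification of $D_u$. Since $f\in H_p^{\lambda}(G)$ has $\widehat f$ supported in $E=\{h_{\lambda_n}\}$ and the Fourier multiplier $x\mapsto e^{-u|x|}$ cannot enlarge a support, the formula $\widehat{p_u*f}(h_x)=\widehat f(h_x)e^{-u|x|}$ shows at once that $p_u*f\in H_p^{\lambda}(G)$. Using the same formula together with $\lambda_n\ge 0$ gives $\widehat{p_u*f}(h_{\lambda_n})=a_n e^{-u\lambda_n}$, hence
\[
\Bcal(p_u*f)=\sum_n a_n e^{-u\lambda_n}e^{-\lambda_n s}=D_u\in\Hcal_p(\lambda)\,.
\]
Because $\Bcal$ is an isometric isomorphism, the remaining convergence $\lim_{u\to 0}D_u=D$ in $\Hcal_p(\lambda)$ is equivalent to $\lim_{u\to 0}p_u*f=f$ in $H_p^{\lambda}(G)$, respectively in the weak star topology when $p=\infty$.

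For $1\le p<\infty$ I would run the standard three-$\varepsilon$ approximate-identity argument. Convolution with a measure of total variation one is a contraction on $L_p(G)$, so $\|p_u*h\|_p\le\|p_u\|\,\|h\|_p=\|h\|_p$ uniformly in $u$, since $\|p_u\|=1$. On a polynomial $P=\sum_{n=1}^N a_n h_{\lambda_n}$ the multiplier formula gives $p_u*P=\sum_{n=1}^N a_n e^{-u\lambda_n}h_{\lambda_n}$, whence $\|p_u*P-P\|_p\le\sum_{n=1}^N|a_n|(1-e^{-u\lambda_n})\to 0$ as $u\to 0$ (a finite sum, and $\|h_{\lambda_n}\|_p=1$). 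As $Pol_{\lambda}(G)$ is dense in $H_p^{\lambda}(G)$ by Proposition~\ref{densitytrigonometric}, approximating $f$ by a polynomial and combining with the uniform contractivity yields $\|p_u*f-f\|_p\to 0$.

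The case $p=\infty$ is the one requiring care, and I regard it as the main obstacle, since norm convergence of $p_u*f$ to $f$ genuinely fails in general and one must settle for weak star convergence. Here the net $(p_u*f)_{u>0}$ is uniformly bounded in $L_\infty(G)$ by $\|f\|_\infty$, and for every character $\gamma=h_x\in\widehat G$ one has, with the pairing $\langle h,g\rangle=\int_G hg\,dm$,
\[
\langle p_u*f,\overline\gamma\rangle=\widehat{p_u*f}(\gamma)=e^{-u|x|}\widehat f(\gamma)\xrightarrow{\ u\to 0\ }\widehat f(\gamma)=\langle f,\overline\gamma\rangle\,.
\]
By linearity this convergence extends to all trigonometric polynomials, which are dense in $L_1(G)$; combined with the uniform $L_\infty$-bound and a final three-$\varepsilon$ estimate it gives $\langle p_u*f,g\rangle\to\langle f,g\rangle$ for every $g\in L_1(G)$, that is, $p_u*f\to f$ weak star. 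Transporting these convergences back through $\Bcal$ then completes all three assertions.
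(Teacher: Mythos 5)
Your proof is correct and follows essentially the same route as the paper's: the multiplier identity $\widehat{p_u*f}(h_x)=e^{-u|x|}\widehat f(h_x)$ gives membership and $\Bcal(p_u*f)=D_u$, and the convergence $p_u*f\to f$ is checked on polynomials and extended by density (in norm for $p<\infty$, weak star for $p=\infty$). The paper only sketches this last step ("check for polynomials, then use density"), and your write-up supplies exactly the intended details.
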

\begin{proof}
Since $\widehat{p_{u}*f}(h_x)=\widehat{p_{u}}(h_x)\widehat{f}(h_x)=e^{-u|x|} \widehat{f}(h_x)$
for all $x \in \widehat{\beta}(\widehat{G})$, we have $p_{u}*f \in H_{p}^{\lambda}(G)$ with $\|p_{u}*f\|_{p}\le \|f\|_{p}$, and also  $\mathcal{B}(p_{u}*f)=D_{u}$. For the second statement notice that $\lim_{u\to 0} p_{u}*f=f$, where the limit for
$1 \leq p < \infty$ is taken in $L_p(G)$, and  for $p=\infty$ in the weak star topology (check for polynomials, then use density).
\end{proof}

\begin{proof}[Proof of the 'if part' of  Theorem~\ref{translationI}]
 Proposition \ref{translationII}  implies that $\|D_{it}\|_{p}=\|D\|_{p}$
 (choose $\omega = \beta(t)$). Then  together with Lemma \ref{limituto0} $$\sup_{z \in [Re>0]} \|D_{z}\|_{p}=\sup_{u>0} \|D_{u}\|_{p}=\|D\|_{p}\,. \qedhere
$$
\end{proof}

For the proof of the 'only if' part of  Theorem~\ref{translationI} we follow an idea which was invented in  \cite{AntonioDefant}.
Given a Banach space $X$,  we denote by $\mathcal{D}_{\infty}(\lambda,X)$ the space of all $\lambda$-Dirichlet series $D=\sum a_{n} e^{-\lambda_{n}s}$, which have coefficients  $a_{n} \in X$ and which converge and define a bounded (and then necessarily holomorphic) function on $[Re>0]$ with values in $X$. Together with the supremum norm this
gives a Banach space provided $\mathcal{D}_{\infty}(\lambda)$ is complete (see \cite{Schoolmann}).

\begin{Lemm} \label{embedding} Let $1\le p < \infty$, and assume that $\mathcal{D}_{\infty}(\lambda)$ is complete. Then the map
$$ \Phi \colon \Hcal_{p}(\lambda) \hookrightarrow \mathcal{D}_{\infty}(\lambda, \Hcal_{p}(\lambda)), ~~ \sum a_{n} e^{-\lambda_{n}s} \mapsto \sum \left(a_{n} e^{-\lambda_{n}s}\right)  e^{-\lambda_{n}z}$$
is an into isometry.
\end{Lemm}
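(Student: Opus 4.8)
The plan is to recognise $\Phi$ as the assignment $z \mapsto D_{z}$, read as a Dirichlet series in the new variable $z$ whose coefficients lie in $X:=\Hcal_{p}(\lambda)$, and thereby reduce everything to the translation results of the previous subsection. Indeed, for fixed $z=u+it \in [\re>0]$ the value of the series defining $\Phi(D)$ is
\[
\Phi(D)(z)=\sum_{n}\big(a_{n}e^{-\lambda_{n}s}\big)e^{-\lambda_{n}z}=\sum_{n}a_{n}e^{-\lambda_{n}z}e^{-\lambda_{n}s}=D_{z}\,,
\]
so that $\Phi(D)$ is precisely the $X$-valued function $z\mapsto D_{z}$. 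By Lemma~\ref{limituto0} one has $D_{u}\in \Hcal_{p}(\lambda)$ with $\|D_{u}\|_{p}\le \|D\|_{p}$, and writing $D_{z}=(D_{u})_{it}$ and applying Proposition~\ref{translationII} with $\omega=\beta(t)$ yields $\|D_{z}\|_{p}=\|D_{u}\|_{p}\le\|D\|_{p}$; since moreover $\lim_{u\to 0}D_{u}=D$ in $\Hcal_{p}(\lambda)$ (again Lemma~\ref{limituto0}), I would record the identity
\[
\sup_{z\in[\re>0]}\|D_{z}\|_{p}=\sup_{u>0}\|D_{u}\|_{p}=\|D\|_{p}\,.
\]

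Next I would treat polynomials. For $P=\sum_{n=1}^{N}a_{n}e^{-\lambda_{n}s}\in Pol(\lambda)$ the series $\Phi(P)(z)=\sum_{n=1}^{N}(a_{n}e^{-\lambda_{n}s})e^{-\lambda_{n}z}$ is a finite sum, hence an entire and bounded $X$-valued $\lambda$-Dirichlet polynomial in $z$; thus $\Phi(P)\in\mathcal{D}_{\infty}(\lambda,X)$, and by the two displayed identities $\|\Phi(P)\|_{\infty}=\sup_{z}\|P_{z}\|_{p}=\|P\|_{p}$, so $\Phi$ is an into isometry on $Pol(\lambda)$. Since $1\le p<\infty$, the space $Pol(\lambda)$ is dense in $\Hcal_{p}(\lambda)$ (Theorem~\ref{comp_pol}), and $\mathcal{D}_{\infty}(\lambda,X)$ is a Banach space because $\mathcal{D}_{\infty}(\lambda)$ is assumed complete; hence $\Phi|_{Pol(\lambda)}$ extends uniquely to an into isometry $\Phi\colon\Hcal_{p}(\lambda)\to\mathcal{D}_{\infty}(\lambda,X)$.

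It remains to identify this extension with the series in the statement. Choosing polynomials $P^{(k)}\to D$ in $\Hcal_{p}(\lambda)$, I would invoke that the Dirichlet coefficient functionals are bounded on both spaces: on $\Hcal_{p}(\lambda)$ because $|a_{n}(E)|=|\widehat{f}(h_{\lambda_{n}})|\le\|E\|_{p}$, and on $\mathcal{D}_{\infty}(\lambda,X)$ via the Bohr-type mean
\[
c_{n}(E)=\lim_{T\to\infty}\frac{1}{2T}\int_{-T}^{T}E(\sigma+it)\,e^{\lambda_{n}(\sigma+it)}\,dt\,,\qquad\sigma>0\,,
\]
which gives $\|c_{n}(E)\|_{X}\le\|E\|_{\infty}$ after letting $\sigma\to 0$. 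Consequently the $n$th coefficient of $\Phi(D)$ equals $\lim_{k}a_{n}^{(k)}e^{-\lambda_{n}s}=a_{n}e^{-\lambda_{n}s}$, so that $\Phi(D)=\sum_{n}(a_{n}e^{-\lambda_{n}s})e^{-\lambda_{n}z}$ as required, while $\|\Phi(D)\|_{\infty}=\lim_{k}\|P^{(k)}\|_{p}=\|D\|_{p}$.

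The genuine content of the proof, and its main obstacle, is the isometric equality $\sup_{z}\|D_{z}\|_{p}=\|D\|_{p}$, which is exactly the computation carried out in the `if part' of Theorem~\ref{translationI} and which rests entirely on the translation machinery of Lemma~\ref{limituto0} and Proposition~\ref{translationII}. Once this is secured, the passage to all of $\Hcal_{p}(\lambda)$ is a routine density argument, the only technical subtlety being the continuity of the coefficient functionals on $\mathcal{D}_{\infty}(\lambda,X)$ used to recognise the extended map as the stated Dirichlet series.
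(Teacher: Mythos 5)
Your proof is correct and follows essentially the same route as the paper: establish the isometry on polynomials via the translation identity $\sup_{z\in[Re>0]}\|D_{z}\|_{p}=\|D\|_{p}$ (which the paper cites as the `if part' of Theorem~\ref{translationI} and which you re-derive from Lemma~\ref{limituto0} and Proposition~\ref{translationII}, exactly as the paper itself proves that part), then extend by density using Theorem~\ref{comp_pol} and identify the extension through the continuity of the Dirichlet coefficient functionals on both spaces. The only cosmetic difference is that you make the boundedness of the coefficient functionals on $\mathcal{D}_{\infty}(\lambda,\Hcal_{p}(\lambda))$ explicit via a Bohr-type mean, where the paper simply invokes continuity of $\Phi$ and of the coefficient maps.
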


In the ordinary case this was proved in \cite[Theorem 2.4]{AntonioDefant} (see also \cite[Section 11.3]{Defant}), and substituting $\T^{\infty}$ by any $\lambda$-Dirichlet group $(G.\beta)$ our proof follows the same strategy.

\begin{proof}
 We consider a $\lambda$-Dirichlet polynomial
$D = \sum_{n=1}^N a_n e^{-\lambda_n s}$. Then, by the 'if part' of Theorem~\ref{translationI},
 the map
$$F \colon [Re>0] \to \Hcal_{p}(\lambda), ~~ z \mapsto \sum_{n=1}^{N} a_{n} e^{-\lambda_{n}z} e^{-\lambda_{n}s}$$
is holomorphic  and $\|F\|_{\infty}=\|D\|_{p}$. Hence $\Phi$, defined as above
on polynomials only,
is isometric. Let $\Phi$ be the unique extension of this isometry to all of $\Hcal_{p}(\lambda)$
(Theorem~\ref{comp_pol}).  We claim that this is the map $\Phi$ from  the lemma. Indeed,
given $D \in \mathcal{H}_p(\lambda)$, let $(D^{N})_{N}$ be a sequence of Dirichlet polynomials which in $\mathcal{H}_p(\lambda)$ converges  to $D$. We  write $\Phi(D)=\sum b_{n} e^{-\lambda_{n}z}$, where $b_{n} \in \Hcal_{p}(\lambda)$. Then  the continuity of $\Phi$ implies $b_{n}=\lim_{N} a_{n}( \Phi(D^{N}))= \lim_{N} a_{n}(D^{N}) e^{-\lambda_{n} s}=a_{n}(D)e^{-\lambda_{n}s},$
the conclusion.
\end{proof}

Now we are prepared for the

\begin{proof}[Proof of the 'only if' part of  Theorem~\ref{translationI}] Let $D=\sum a_{n}e^{-\lambda_{n}s} \in  \mathcal{H}_{p}^{+}(\lambda)$, i.e. $D_{\varepsilon}\in \mathcal{H}_{p}(\lambda)$
for all $\varepsilon >0$ and
$\sup_{\varepsilon>0} \|D_\varepsilon\|_p =\|D\|_{\mathcal{H}_{p}^{+}(\lambda)}< \infty$. Hence  by Lemma \ref{embedding} the maps
  $$F_{\varepsilon}(D)(z):=\Phi(D_{\varepsilon})(z)= \sum \left(a_{n}e^{-\varepsilon\lambda_{n}}e^{-\lambda_{n}s}\right) e^{-\lambda_{n}z} \colon [Re>0] \to \mathcal{H}_{p}(\lambda) $$
   are holomorphic for all $\varepsilon >0$, and bounded uniformly in $\varepsilon$ by $\|D\|_{\mathcal{H}_{p}^{+}(\lambda)}$. Now the function $$F(z):=\sum a_{n}e^{-\lambda_{n}s} e^{-\lambda_{n}z} \colon [Re>0] \to \mathcal{H}_{p}(\lambda)$$ is holomorphic and bounded. Let $\varphi(z):=\frac{1+z}{1-z} \colon \D \to [Re>0]$ be the Cayley transformation (with inverse $\varphi^{-1}(s)=\frac{s-1}{s+1}$), and consider the function $f:=F\circ \varphi \colon \D \to \mathcal{H}_{p}(\lambda)$. Then $\lim_{\varepsilon\to 0}\varphi^{-1}(\varepsilon+it)=\frac{it-1}{it+1}=:w \in \T$ and as a matter of fact the so called Stolz region $S(\alpha, w):=\{ z \in \D \mid |z-w|<\alpha(1-|z|)\}$ for any $\alpha>1$ contains the set $\{\varphi^{-1}(\varepsilon+it) \mid 0<\varepsilon<\varepsilon_{0} \}$ for some $\varepsilon_{0}>0$  (see \cite[\S 11.4]{Defant}).  Since $\mathcal{H}_{p}(\lambda)$, being a closed subspace of some $L_{p}(G)$, has the analytic Radon-Nikodym property, the limits $\lim_{ S(\alpha,w)\ni z \to \frac{it-1}{it+1}}f(z)$ exists for almost all $t \in \R$. But then
$$\lim_{\varepsilon\to 0} F(\varepsilon+it)=\lim_{ S(\alpha,w)\ni z \to \frac{it-1}{it+1}}f(z)$$ exists in $\mathcal{H}_{p}(\lambda)$ for almost all $t \in \R$ and equals $D_{it}$ (since the Dirichlet coefficients of $F(\varepsilon+it)$ are given by $(a_{n}(D)e^{-\lambda_{n}(\varepsilon+it)})$). Now Proposition \ref{translationII} (applied to some admissible $\omega:=\beta(t)$) gives $D\in \mathcal{H}_{p}(\lambda)$.
\end{proof}

\smallskip

\subsection{$\pmb{\mathcal{D}_{\infty}(\lambda)}$ equals $\pmb{\mathcal{H}_{\infty}(\lambda)}$ } \label{fourier}

As already mentioned, there are now two  `$H_\infty$-spaces of $\lambda$-Dirichlet series' around, namely $\mathcal{D}_{\infty}(\lambda)$ and $\Hcal_{\infty}(\lambda)$.
 Note that by \cite{DefantSchoolmann2}  there are  frequencies $\lambda$
such that $\mathcal{D}_{\infty}(\lambda)$ is not complete, hence in these cases  $\mathcal{D}_{\infty}(\lambda)\neq\mathcal{H}_{\infty}(\lambda)$. In this subsection we show that
both  spaces  coincide isometrically for $\lambda$'s satisfying $\mathcal{D}^{ext}_{\infty}(\lambda)=\mathcal{D}_{\infty}(\lambda)$ and $L(\lambda)<\infty$. See
Theorem \ref{conditions2} as well as
Theorem \ref{conditions} for 'testable' sufficient conditions on  this.

\begin{Theo} \label{Dstar} Let $(G,\beta)$ be a $\lambda$-Dirichlet group, and $\lambda$ a frequency. Then there is an injective contraction map
\begin{equation} \label{Psi}
\Psi \colon \mathcal{D}^{ext}_{\infty}(\lambda) \to H^{\lambda}_{\infty}(G),~~ D \mapsto f
\end{equation}
such that $a_{n}(D)=\widehat{f}(h_{\lambda_{n}})$ for all $n \in \N$.
\end{Theo}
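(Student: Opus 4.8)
The plan is to construct $f=\Psi(D)$ as a weak-star cluster point in $L_{\infty}(G)$ of the ``group versions'' of the typical means of $D$, once a uniform sup-norm bound for those means in terms of $\|D\|_{\infty}$ is in hand. Throughout, let $g$ be the bounded holomorphic extension of $D=\sum a_{n}e^{-\lambda_{n}s}$ to $[Re>0]$, so that $\|D\|_{\infty}=\|g\|_{\infty}$.

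The heart of the matter is a Fej\'er-kernel representation of the typical means. Let $K_{x}$ denote the Fej\'er kernel on $\R$, i.e. the nonnegative function with $\int_{\R}K_{x}=1$ and $\int_{\R}K_{x}(t)e^{-i\mu t}\,dt=(1-\mu/x)_{+}$ for $\mu\ge 0$. I claim that
\begin{equation}\label{fejerrep}
R_{x}(D)(s)=\int_{-\infty}^{\infty} g(s+it)\,K_{x}(t)\,dt\,,\qquad s\in[Re>0]\,.
\end{equation}
To see this, note that for the Dirichlet \emph{polynomial} $R_{y}(D)$ one computes the finite sum
$$\int_{\R}R_{y}(D)(s+it)K_{x}(t)\,dt=\sum_{\lambda_{n}<y}a_{n}\Big(1-\tfrac{\lambda_{n}}{y}\Big)\Big(1-\tfrac{\lambda_{n}}{x}\Big)_{+}e^{-\lambda_{n}s}\,,$$
using $\int_{\R}K_{x}(t)e^{-i\lambda_{n}t}\,dt=(1-\lambda_{n}/x)_{+}$. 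Letting $y\to\infty$, the right-hand sum tends to $R_{x}(D)(s)$, while by Proposition~\ref{typicalmeans} we have $R_{y}(D)\to g$ uniformly on $[Re>\sigma/2]$ (where $\sigma:=Re(s)>0$); the integrands are then uniformly bounded and converge pointwise, so dominated convergence identifies the limit of the left-hand integral with $\int_{\R}g(s+it)K_{x}(t)\,dt$. This proves \eqref{fejerrep}. Since $s+it\in[Re>0]$ whenever $s\in[Re>0]$, and $\int_{\R}K_{x}=1$, formula \eqref{fejerrep} yields at once
$$\sup_{s\in[Re>0]}|R_{x}(D)(s)|\le\|g\|_{\infty}=\|D\|_{\infty}\,,\qquad x>0\,.$$

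Now I pass to the group $G$. Put $P_{x}:=\sum_{\lambda_{n}<x}a_{n}(1-\lambda_{n}/x)h_{\lambda_{n}}\in Pol_{\lambda}(G)\subset H_{\infty}^{\lambda}(G)$, which is the function on $G$ corresponding to $R_{x}(D)$ under the Bohr map. By Corollary~\ref{isometrytrans2} its $L_{\infty}(G)$-norm equals $\sup_{[Re>0]}|R_{x}(D)|$, so $\|P_{x}\|_{\infty}\le\|D\|_{\infty}$ for every $x>0$. Thus the net $(P_{x})_{x>0}$ lies in the ball of radius $\|D\|_{\infty}$ of $L_{\infty}(G)=L_{1}(G)'$, which is weak-star compact (Banach--Alaoglu); let $f$ be a weak-star cluster point, so $\|f\|_{\infty}\le\|D\|_{\infty}$. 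For each $\gamma\in\widehat{G}$ the scalar net $\widehat{P_{x}}(\gamma)=\int_{G}P_{x}\overline{\gamma}\,dm$ converges as $x\to\infty$, namely to $a_{n}$ if $\gamma=h_{\lambda_{n}}$ (since $1-\lambda_{n}/x\to1$) and to $0$ otherwise. Because $f$ is a cluster point and $\overline{\gamma}\in L_{1}(G)$, this forces $\widehat{f}(h_{\lambda_{n}})=a_{n}$ for all $n$ and $\widehat{f}(\gamma)=0$ for $\gamma\notin\{h_{\lambda_{n}}\}$. Hence $f\in H_{\infty}^{\lambda}(G)$, and since an $L_{1}(G)$-function is determined by its Fourier coefficients, $f$ is independent of the chosen cluster point. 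Setting $\Psi(D):=f$ gives a well-defined contraction with $a_{n}(D)=\widehat{f}(h_{\lambda_{n}})$.

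Finally, $\Psi$ is linear, since $\Psi(c_{1}D_{1}+c_{2}D_{2})$ and $c_{1}\Psi(D_{1})+c_{2}\Psi(D_{2})$ have the same Fourier coefficients and hence agree by the uniqueness just used, and it is injective, since $\Psi(D)=0$ forces $a_{n}=\widehat{\Psi(D)}(h_{\lambda_{n}})=0$ for all $n$, i.e. $D=0$. The main obstacle is precisely the uniform estimate behind \eqref{fejerrep}: one must dominate the typical means by the \emph{extension} $g$ rather than by the Dirichlet series itself, which need not converge anywhere on $[Re>0]$; the Fej\'er identity together with the polynomial approximation of Proposition~\ref{typicalmeans} is what supplies this. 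A secondary technical point is that $G$ (e.g. $G=\overline{\R}$) need not be metrizable, so $L_{1}(G)$ may be non-separable and the compactness step must be carried out with nets and cluster points rather than sequences.
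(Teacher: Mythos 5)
Your proof is correct. It shares the paper's skeleton --- build uniformly norm-bounded elements of $H_{\infty}^{\lambda}(G)$ out of the typical means of $D$ and extract $f$ by weak-star compactness --- but the route to the crucial uniform estimate is genuinely different. The paper never bounds $R_{x}(D)$ on all of $[Re>0]$ directly: it passes to the translated series $D_{\sigma}$, for which $(R_{x}(D_{\sigma}))_{x}$ is a sup-norm Cauchy net on $[Re>0]$ by Proposition~\ref{typicalmeans}, obtains limits $f_{\sigma}\in H_{\infty}^{\lambda}(G)$ with $\|f_{\sigma}\|_{\infty}=\|D_{\sigma}\|_{\infty}\le\|D\|_{\infty}$ via Lemma~\ref{isompolyinfty}, and only then takes a weak-star limit along $\sigma=1/n_{k}\to 0$, i.e.\ a two-stage limit in $x$ and $\sigma$. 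Your Fej\'er identity $R_{x}(D)(s)=\int_{\R}g(s+it)K_{x}(t)\,dt$ yields $\sup_{[Re>0]}|R_{x}(D)|\le\|D\|_{\infty}$ in one stroke, so you can take a cluster point of the polynomial net $(P_{x})_{x}$ itself; this is cleaner and records, as a by-product, the useful fact that the typical means of any $D\in\mathcal{D}^{ext}_{\infty}(\lambda)$ are dominated by $\|D\|_{\infty}$ on the whole half-plane rather than only eventually on interior half-planes. Your insistence on nets and cluster points is also the more careful option: the paper extracts a weak-star convergent \emph{subsequence} on the grounds that $L_{1}(G)$ is separable, which fails for $G=\overline{\R}$ (the uncountably many characters $h_{x}$ form a $1$-separated subset of $L_{1}(\overline{\R})$), whereas your cluster-point argument needs no metrizability. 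Two minor points: the equality $\|P_{x}\|_{L_{\infty}(G)}=\sup_{[Re>0]}|R_{x}(D)|$ is Lemma~\ref{isompolyinfty} rather than Corollary~\ref{isometrytrans2} (the latter fixes $\omega$ and varies only $t$, although the two are close via the density of $\beta(\R)$); and in the dominated-convergence step it is worth saying explicitly that the uniform convergence $R_{y}(D)\to g$ on $[Re>\sigma/2]$ supplies the uniform bound on the integrands for large $y$ --- which is implicit in what you wrote.
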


 An interesting  by-product of Theorem~\ref{Dstar} is that  under no additional further assumptions on the frequency $\lambda$, each Dirichlet series $D \in \mathcal{D}_{\infty}(\lambda)$ has Dirichlet coefficients
which are Fourier coefficients of an $L_\infty$-function on a compact abelian group
(e.g. the Bohr compactification of $\mathbb{R}$).
Hence the  following corollary is an immediate consequence of Parseval's equality.

\begin{Coro}
\label{H2}
Let $D \in \mathcal{D}^{ext}_{\infty}(\lambda)$ with Dirichlet coefficients $(a_{n})_{n}$. Then
$$\left(\sum_{n=1}^{\infty} |a_{n}|^{2}\right)^{\frac{1}{2}} \le \|D\|_{\infty}.$$
\end{Coro}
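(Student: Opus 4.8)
The plan is to push the Dirichlet series $D$ into a Hardy space over a compact group by means of Theorem~\ref{Dstar}, and then to extract the coefficient sum through Parseval's identity.

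First I would fix a $\lambda$-Dirichlet group $(G,\beta)$; since every frequency admits one (e.g.\ the Bohr compactification $(\overline{\R},\beta_{\overline{\R}})$), this costs nothing. Applying Theorem~\ref{Dstar} to $D \in \mathcal{D}^{ext}_{\infty}(\lambda)$ produces a function $f = \Psi(D) \in H^{\lambda}_{\infty}(G)$ with $\widehat{f}(h_{\lambda_n}) = a_n$ for every $n$, and, because $\Psi$ is a contraction and the norm on $\mathcal{D}^{ext}_{\infty}(\lambda)$ is the supremum norm, with $\|f\|_{\infty} \le \|D\|_{\infty}$.

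Next I would exploit that $G$ is compact with normalized Haar measure $m$, so that $L_{\infty}(G) \hookrightarrow L_{2}(G)$ contractively and $\|f\|_{2} \le \|f\|_{\infty}$. Parseval's equality on the compact abelian group $G$ then gives
$$\|f\|_{2}^{2} = \sum_{\gamma \in \widehat{G}} |\widehat{f}(\gamma)|^{2}.$$
Here I would invoke the defining property of $H^{\lambda}_{\infty}(G)$: the Fourier transform $\widehat{f}$ is supported in $\{h_{\lambda_n} : n \in \N\}$. Since $\lambda$ is strictly increasing and $x \mapsto h_{x}$ is injective (via the embedding $\widehat{\beta}\colon \widehat{G} \hookrightarrow \R$), the characters $h_{\lambda_n}$ are pairwise distinct, so the Parseval sum collapses to $\sum_{n} |\widehat{f}(h_{\lambda_n})|^{2} = \sum_{n} |a_n|^{2}$. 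Chaining the three estimates $\big(\sum_{n} |a_n|^{2}\big)^{1/2} = \|f\|_{2} \le \|f\|_{\infty} \le \|D\|_{\infty}$ yields the claim.

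There is essentially no obstacle here: the entire analytic content has been absorbed into Theorem~\ref{Dstar}, whose proof establishes the nontrivial fact that the coefficients of a merely bounded-extendable Dirichlet series are the Fourier coefficients of an $L_{\infty}$-function on $G$. The only points requiring a moment of care are the contraction estimate (so that the norms are matched correctly) and the bookkeeping that the $h_{\lambda_n}$ are distinct, ensuring that no coefficient is double counted or omitted in Parseval's sum.
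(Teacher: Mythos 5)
Your proof is correct and follows exactly the route the paper intends: the corollary is stated there as an immediate consequence of Theorem~\ref{Dstar} combined with Parseval's equality on the compact group $G$, which is precisely your chain $\bigl(\sum_n |a_n|^2\bigr)^{1/2} = \|f\|_2 \le \|f\|_\infty \le \|D\|_\infty$. Your additional remarks on the distinctness of the characters $h_{\lambda_n}$ and the normalization of the Haar measure are accurate bookkeeping that the paper leaves implicit.
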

In view of Theorem~\ref{conditions2} the following results collects a couple of sufficient conditions under which
$\mathcal{D}_\infty(\lambda)$ and $\mathcal{H}_\infty(\lambda)$ even coincide isometrically.

\begin{Theo} \label{mainresultinfty}  Let $(G,\beta)$ be a  $\lambda$-Dirichlet group, and assume that $L(\lambda)<\infty$ and that $\mathcal{D}^{ext}_{\infty}(\lambda)=\mathcal{D}_{\infty}(\lambda)$. Then the mapping $\Psi$ from Theorem \ref{Dstar} is an onto isometry and its inverse is given by the Bohr map
\begin{equation} \label{borro}
\Bcal\colon H_{\infty}^{\lambda}(G) \to \mathcal{D}_{\infty}(\lambda),~~ f \mapsto \sum \widehat{f}(h_{\lambda_{n}}) e^{-\lambda_{n}s}.
\end{equation}
In particular, $\mathcal{D}_{\infty}(\lambda)=\mathcal{H}_{\infty}(\lambda)$ isometrically.
\end{Theo}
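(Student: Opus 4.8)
The plan is to show that the Bohr map $\mathcal{B}$ from \eqref{borro} is a contraction mapping $H^\lambda_\infty(G)$ into $\mathcal{D}_\infty(\lambda)$ and that it is a two-sided inverse of the contraction $\Psi$ from Theorem~\ref{Dstar}. Once this is established, the elementary fact that two mutually inverse contractions between Banach spaces are necessarily isometries finishes everything: for $D \in \mathcal{D}_\infty(\lambda)$ one has $\|D\|_\infty = \|\mathcal{B}(\Psi(D))\|_\infty \le \|\Psi(D)\|_\infty \le \|D\|_\infty$, forcing $\|\Psi(D)\|_\infty = \|D\|_\infty$, and symmetrically for $\mathcal{B}$. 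The identity $\mathcal{D}_\infty(\lambda) = \mathcal{H}_\infty(\lambda)$ then follows from Definition~\ref{zugcrash}, since the $\mathcal{H}_\infty(\lambda)$-norm of $\mathcal{B}(f)$ is $\|f\|_\infty$, which we will have shown to equal its $\mathcal{D}_\infty(\lambda)$-norm.

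The heart of the matter is to prove that $\mathcal{B}(f) \in \mathcal{D}_\infty(\lambda)$ with $\|\mathcal{B}(f)\|_\infty \le \|f\|_\infty$ for every $f \in H^\lambda_\infty(G)$. First I would record that the coefficients $a_n = \widehat{f}(h_{\lambda_n})$ satisfy $|a_n| \le \|f\|_1 \le \|f\|_\infty$, so that $L(\lambda) < \infty$ guarantees absolute convergence of $D^\omega := \sum a_n \omega(\lambda_n) e^{-\lambda_n s}$ on $[Re > L(\lambda)]$ for every $\omega \in G$; in particular each $D^\omega$ is somewhere convergent. Next I invoke Proposition~\ref{holomorphy}, which for almost every $\omega$ produces a holomorphic function $F_\omega$ on $[Re>0]$ bounded by $\|f\|_\infty$; the proof of that proposition moreover exhibits $F_\omega(z) = z\, g_\omega(z)$ with $g_\omega(z) = \int_0^\infty \big(\sum_{\lambda_n < x} a_n \omega(\lambda_n)\big) e^{-zx}\, dx$. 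A routine Fubini interchange, valid on $[Re>L(\lambda)]$ by the absolute convergence just noted, gives $z\, g_\omega(z) = \sum_n a_n \omega(\lambda_n) e^{-\lambda_n z}$, so that $F_\omega$ agrees with the sum function of $D^\omega$ on $[Re > L(\lambda)]$ and is therefore its bounded holomorphic extension. Hence $D^\omega \in \mathcal{D}^{ext}_\infty(\lambda)$ with $\|D^\omega\|_\infty \le \|f\|_\infty$ for almost all $\omega$, and the hypothesis $\mathcal{D}^{ext}_\infty(\lambda) = \mathcal{D}_\infty(\lambda)$ upgrades this to $D^\omega \in \mathcal{D}_\infty(\lambda)$.

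The remaining obstacle --- and the point where I expect the only real subtlety --- is that Proposition~\ref{holomorphy} yields the conclusion only for \emph{almost} all $\omega$, whereas the object we care about is $D = D^{e}$, a single point of measure zero. To bridge this gap I would first note that the standing hypotheses $L(\lambda) < \infty$ and $\mathcal{D}^{ext}_\infty(\lambda) = \mathcal{D}_\infty(\lambda)$ are precisely condition~(3) of Theorem~\ref{conditions}, so $\mathcal{D}_\infty(\lambda)$ is complete. Fixing one good $\omega_0$ with $D^{\omega_0} \in \mathcal{D}_\infty(\lambda)$, Proposition~\ref{rotation} then applies and shows that every rotation $(D^{\omega_0})^{\omega'}$ lies in $\mathcal{D}_\infty(\lambda)$ with the same norm; choosing $\omega' = \omega_0^{-1}$ and using $(D^{\omega_0})^{\omega'} = D^{\omega_0 \omega'}$ recovers $D = D^{e} \in \mathcal{D}_\infty(\lambda)$ with $\|D\|_\infty = \|D^{\omega_0}\|_\infty \le \|f\|_\infty$, as desired.

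Finally I would verify the two composition identities. On one hand $\mathcal{B} \circ \Psi = \mathrm{id}$ on $\mathcal{D}_\infty(\lambda)$ is immediate from the coefficient identity $\widehat{\Psi(D)}(h_{\lambda_n}) = a_n(D)$ of Theorem~\ref{Dstar}, since $\mathcal{B}(\Psi(D)) = \sum a_n(D) e^{-\lambda_n s} = D$. On the other hand, for $f \in H^\lambda_\infty(G)$ the function $g := \Psi(\mathcal{B}(f))$ satisfies $\widehat{g}(h_{\lambda_n}) = a_n(\mathcal{B}(f)) = \widehat{f}(h_{\lambda_n})$ for all $n$; since both $\widehat f$ and $\widehat g$ vanish off $\{h_{\lambda_n} : n \in \N\}$, uniqueness of Fourier transforms gives $f = g$, i.e. $\Psi \circ \mathcal{B} = \mathrm{id}$. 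With the two contractions shown to be mutually inverse, the isometry claim for $\Psi$ and $\mathcal{B}$ and the resulting isometric equality $\mathcal{D}_\infty(\lambda) = \mathcal{H}_\infty(\lambda)$ follow as outlined in the first paragraph.
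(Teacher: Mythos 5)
Your proof is correct and follows essentially the same route as the paper: almost-everywhere identification of the vertical limits $D^{\omega}$ with the bounded holomorphic functions $F_{\omega}$ from Proposition~\ref{holomorphy}, the hypothesis $\mathcal{D}^{ext}_{\infty}(\lambda)=\mathcal{D}_{\infty}(\lambda)$ to place $D^{\omega}$ in $\mathcal{D}_{\infty}(\lambda)$, and then Proposition~\ref{rotation} applied with $\omega^{-1}$ (using completeness via Theorem~\ref{conditions}) to recover $D$ itself, with Theorem~\ref{Dstar} supplying the reverse inequality. The only cosmetic difference is that you justify $F_{\omega}=D^{\omega}$ on $[\mathrm{Re}>L(\lambda)]$ by a direct Fubini computation on the Laplace-transform formula $F_{\omega}(z)=z\,g_{\omega}(z)$, where the paper routes this through the $L_{2}$-argument of its lemma preceding Theorem~\ref{helson}; both are valid instances of the same strategy.
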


For polynomials the proof of   the desired   norm equality  is rather easy. Since   $\beta(\R)$ is dense in  $G$,
a standard distinguished maximum modulus principle gives the following

\begin{Lemm} \label{isompolyinfty} Let $(G,\beta)$ be a Dirichlet group. Then for all $a_{1}, \ldots a_{N} \in \C$ and $ \lambda_1, \ldots, \lambda_N \in \R_{\ge 0}$
$$
\sup_{ \text{Re}\, s > 0} \bigg| \sum_{n=1}^{N} a_{n}e^{-s\lambda_{n}}\bigg|
=
\sup_{t \in \R} \bigg| \sum_{n=1}^{N} a_{n}e^{-it\lambda_{n}}\bigg| = \sup_{\omega \in G} \bigg|\sum_{n=1}^{N} a_{n}\omega(\lambda_{n}) \bigg|.$$
\end{Lemm}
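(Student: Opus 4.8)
The plan is to establish the two asserted equalities separately: the equality between the middle and right-hand suprema by exploiting that $\beta(\R)$ is dense in $G$, and the equality between the left-hand and middle suprema by a maximum modulus argument on the half plane. First I would look at the function
\[
P(\omega) := \sum_{n=1}^{N} a_{n} \omega(\lambda_{n}) = \sum_{n=1}^{N} a_{n} h_{\lambda_{n}}(\omega)\,,
\]
which, since each $\lambda_n$ is tacitly assumed to lie in $\widehat{\beta}(\widehat{G})$ for the notation $\omega(\lambda_n)$ to make sense, is a finite linear combination of characters on $G$ and hence continuous. By the defining property $h_{\lambda_n}\circ\beta = e^{-i\lambda_n\cdot}$ recorded in Proposition~\ref{hx}, one has $P(\beta(t)) = \sum_{n=1}^N a_n e^{-it\lambda_n}$ for every $t\in\R$. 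As $\beta$ has dense range and $P$ is continuous, it follows that $\sup_{\omega\in G}|P(\omega)| = \sup_{t\in\R}|P(\beta(t))|$, which is precisely the equality of the middle and right-hand terms.

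Next I would treat the equality of the left-hand and middle terms. The exponential polynomial $f(s) := \sum_{n=1}^N a_n e^{-s\lambda_n}$ is entire, and since every $\lambda_n \ge 0$ we have $|e^{-s\lambda_n}| = e^{-\lambda_n \re s} \le 1$ on the closed half plane $[\re \ge 0]$; thus $f$ is bounded there by $\sum_{n=1}^N |a_n|$ and continuous up to the imaginary axis, where its boundary values are $f(it) = \sum_{n=1}^N a_n e^{-it\lambda_n}$. Because $[\re>0]$ is dense in $[\re\ge 0]$ and $f$ is continuous, the supremum of $|f|$ over the open half plane equals its supremum over the closed one; and the Phragm\'{e}n--Lindel\"{o}f form of the maximum modulus principle for the half plane, applicable precisely because $f$ is bounded, forces this common value to be realised along the boundary, giving $\sup_{\re s>0}|f(s)| = \sup_{t\in\R}|f(it)|$.

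Combining the two displayed equalities yields the three-fold identity. The only genuinely delicate step is the passage from the open half plane to its boundary: one must invoke the maximum principle in its Phragm\'{e}n--Lindel\"{o}f form for the unbounded region $[\re>0]$ rather than the classical version for bounded domains, and it is exactly here that the hypothesis $\lambda_n \ge 0$ is needed, since it secures the boundedness of $f$ on $[\re\ge 0]$ that this principle requires. Everything else reduces to continuity of $P$ together with the density of $\beta(\R)$ in $G$.
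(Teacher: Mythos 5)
Your proof is correct and follows essentially the same route the paper indicates: the equality of the last two suprema via continuity of the character polynomial and density of $\beta(\R)$ in $G$, and the equality of the first two via a Phragm\'en--Lindel\"of maximum modulus principle on the half plane $[\re > 0]$, which is exactly the ``standard maximum modulus principle'' the paper invokes without detail. You also correctly identify the two points the paper leaves tacit, namely that $\lambda_n \in \widehat{\beta}(\widehat{G})$ is needed for $\omega(\lambda_n)$ to make sense and that $\lambda_n \ge 0$ is what guarantees the boundedness required by Phragm\'en--Lindel\"of.
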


This gives the desired isometry between $(Pol(\lambda), \|\cdot\|_{\infty})$ and $(Pol_{\lambda}(G),\|\cdot\|_{\infty})$. We have to prove that statement beyond polynomials.

\medskip

\begin{proof}[Proof of Theorem \ref{Dstar}]
Let $D=\sum a_{n}e^{-\lambda_{n}s} \in \mathcal{D}^{ext}_{\infty}(\lambda)$ with extension $F$, and
consider
for $x, \sigma >0$ the polynomials
$$R_x^\sigma(\omega) := R_{x}(D_\sigma)(\omega)= \sum_{\lambda_{n}<x} a_{n} \left(1-\frac{
\lambda_{n}}{x}\right) e^{-\sigma\lambda_{n}} \omega(\lambda_{n})$$
 from Theorem \ref{typicalmeans} which are associated to $D_\sigma$.
Then by Lemma \ref{isompolyinfty} the net $(R_x^\sigma)_x$  for each $\sigma$ forms a  Cauchy net in $H_{\infty}^{\lambda}(G)$ with limit, say $f_{\sigma}$. Moreover,
for all $n, \sigma$
$$\widehat{f_{\sigma}}(h_{\lambda_{n}})=\lim_{x \to \infty} \widehat{R_{x}^{\sigma}}(h_{\lambda_{n}})=a_{n}e^{-\sigma\lambda_{n}}\,,$$
and
$$\|f_{\sigma}\|_{\infty}=\lim_{x \to \infty} \|P^{\sigma}_{x}\|_{\infty}=\|D_{\sigma}\|_{\infty}\le \|F\|_{\infty}.$$
Now recall that the unit ball of $L_{\infty}(G)$ together with its weak star topology
is metrizable and compact (by the Alaoglu-Bourbarki theorem and the fact that $L_{1}(G)$
is separable). Hence, $(f_{1/n})_n$ has a weak star convergent subsequence $(f_{1/n_k})_k$ with limit
$f \in L_{\infty}(G)$, $\|f\|_\infty \leq \|D\|_{\infty}$. Then for each $n$
$$a_n = \lim_k a_n e^{-\frac{1}{n_k} \lambda_n} = \lim_k \int_{\overline{\R}}f_{\frac{1}{n_k}}(w) \overline{h_{\lambda_n}(w)}dw = \widehat{f}(h_{\lambda_n})\,,$$
and moreover for each $x \notin (\lambda_{n})_{n}$
$$\widehat{f}(h_{x})=\lim_{k}\int_{\overline{\R}}f_{\frac{1}{n_k}}(w) \overline{h_{x}}(w)dw=0,$$
since $f_{\frac{1}{n_k}} \in H_{\infty}^{\lambda}(G)$. Hence $f \in H_{\infty}^\lambda(G)$ and $\Psi(D) = f$\,.
\end{proof}

The proof of Theorem~\ref{mainresultinfty} needs more preparation, and will be given after the following result inspired by the work of Helson from \cite{Helson}.

\begin{Lemm}
Let $(G,\beta)$ be a $\lambda$-Dirichlet group and  $L(\lambda)<\infty$.
For  $f \in H_{\infty}^{\lambda}(G)$  consider the Dirichlet series
$D=\sum a_{n} e^{-\lambda_{n}s}$, where $a_{n}:=\widehat{f}(h_{\lambda_{n}})$  for $n \in \mathbb{N}$.
Then $D^{\omega}\in \mathcal{D}^{ext}_{\infty}(\lambda)$ for  almost all $\omega \in G$, and moreover
\[
f_{\omega}*P_{u}(t)=D^{\omega}(u+it)
\]
for  almost all $\omega \in G$ and for all $u+it\in [Re> \max\{0,\sigma_{c}(D^{\omega})\}]$\,.
\end{Lemm}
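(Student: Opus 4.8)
The plan is to identify the bounded holomorphic function $F_\omega$ produced by Proposition~\ref{holomorphy} with the analytic continuation of the vertical limit $D^\omega$, and to read off both assertions of the lemma from this single identification. First I would fix the null set $N \cup M \subset G$ appearing in the proof of Proposition~\ref{holomorphy}, so that for every $\omega \notin N \cup M$ the function $F_\omega(u+it) = (f_\omega * P_u)(t)$ is holomorphic on $[\mathrm{Re}>0]$ and bounded by $\|f\|_\infty$, where $f_\omega \in L_\infty(\R)$ by Lemma~\ref{restrictiontoreals}. The key structural input, already extracted inside that proof as equation~\eqref{fourier3}, is the integral representation
$$\frac{F_\omega(z)}{z} = \int_{0}^{\infty} \Big(\sum_{\lambda_{n}<x} a_{n} \omega(\lambda_{n})\Big) e^{-zx}\, dx =: g_\omega(z)\,, \qquad z \in [\mathrm{Re}>0]\,,$$
which I would take as the starting point.

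Next I would use the hypothesis $L(\lambda)<\infty$ to control the convergence of $D^\omega$. Since $|a_n| = |\widehat{f}(h_{\lambda_n})| \le \|f\|_\infty$ and $|\omega(\lambda_n)|=1$, the coefficients of $D^\omega$ are uniformly bounded, whence $\sum |a_n \omega(\lambda_n)| e^{-\lambda_n \sigma} \le \|f\|_\infty \sum e^{-\lambda_n \sigma} < \infty$ for every $\sigma > L(\lambda)$, the last series being convergent by the very definition $L(\lambda) = \sigma_c(\sum e^{-\lambda_n s})$. Thus $\sigma_a(D^\omega) \le L(\lambda) < \infty$; in particular $D^\omega$ is somewhere convergent and defines a holomorphic function on $[\mathrm{Re} > \sigma_c(D^\omega)]$.

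The heart of the argument is the identification $F_\omega = D^\omega$ on the overlap. For $\mathrm{Re}(z) > \sigma_a(D^\omega)$ the series converges absolutely, and a routine Abel summation (integrating the step function $x \mapsto \sum_{\lambda_n<x} a_n\omega(\lambda_n)$ over the intervals $[\lambda_n,\lambda_{n+1})$) gives $z\, g_\omega(z) = \sum_n a_n \omega(\lambda_n) e^{-\lambda_n z} = D^\omega(z)$. Combining this with the representation above yields $F_\omega(z) = z\, g_\omega(z) = D^\omega(z)$ for $\mathrm{Re}(z) > \sigma_a(D^\omega)$. Since $F_\omega$ is holomorphic on $[\mathrm{Re}>0]$ and $D^\omega$ on $[\mathrm{Re} > \sigma_c(D^\omega)]$, both are holomorphic on the connected half plane $[\mathrm{Re} > \max\{0, \sigma_c(D^\omega)\}]$, on whose nonempty open subset $[\mathrm{Re} > \max\{0,\sigma_a(D^\omega)\}]$ they coincide; the identity theorem then propagates the equality $F_\omega = D^\omega$ to all of $[\mathrm{Re} > \max\{0, \sigma_c(D^\omega)\}]$. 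This is precisely the asserted formula $f_\omega * P_u(t) = D^\omega(u+it)$, and since $F_\omega$ is a bounded holomorphic extension of the somewhere convergent series $D^\omega$ to $[\mathrm{Re}>0]$, it certifies $D^\omega \in \mathcal{D}^{ext}_{\infty}(\lambda)$.

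The main obstacle I anticipate is bookkeeping rather than conceptual: one must ensure that all the almost-everywhere statements hold off a \emph{single} null set, which is guaranteed because Proposition~\ref{holomorphy} already assembles its exceptional set from the countable family $u \in \mathbb{Q}_{+}$; and one must check that the region $[\mathrm{Re} > \max\{0,\sigma_a(D^\omega)\}]$ where absolute convergence and the integral formula hold simultaneously is nonempty and lies in the common domain of $F_\omega$ and $D^\omega$ before invoking the identity theorem. The case $\sigma_a(D^\omega) < 0$ is in fact even simpler, as then $F_\omega = D^\omega$ holds directly on all of $[\mathrm{Re}>0]$.
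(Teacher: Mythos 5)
Your proposal is correct and follows essentially the same route as the paper: bound $\sigma_a(D^\omega)$ by $L(\lambda)$ using the boundedness of the coefficients, invoke Proposition~\ref{holomorphy} to obtain the bounded holomorphic function $F_\omega$, identify $F_\omega$ with $D^\omega$ on a right half-plane, and conclude with the identity principle. The only (harmless) variation is in the identification step: the paper matches $F_\omega$ and $D^\omega$ on the single vertical line $[\mathrm{Re}=L(\lambda)+1]$ via $L_2$-limits of partial sums and continuity, whereas you match them on the open half-plane of absolute convergence by combining the integral formula \eqref{fourier3} with a Fubini/Abel-summation computation.
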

\begin{proof} Since $L(\lambda)<\infty$ and $(a_{n})_{n}$ is bounded, we have that $\sigma_{a}(D^{\omega})\le L(\lambda)<\infty$ for all $\omega \in G$. We already know from Proposition \ref{holomorphy} that
there is a null set $N \subset G$ such for all $\omega \notin N$
the function
$$F_\omega\colon [Re>0] \to \C\,,\,\, F_{\omega}(u+it):=(f_{\omega}*P_{u})(t)$$
is holomorphic and bounded by $\|f\|_{\infty}$. On the other side $D^{\omega}$ defines a holomorphic function on $[Re>\sigma_{c}(D^{\omega})]$ for all $\omega \in G$. So it suffices to verify that there is some null set $M \subset G$  which contains $N$ and is such that $F_{\omega}=D^{\omega}$ on $[Re=u]$ for  all $u>\max\{0,\sigma_{c}(D^{\omega})\}$
 and   $\omega \notin M$.  Let $u:=L(\lambda)+1$ (note that $L(\lambda)=0$ is possible) and as in the proof of Proposition \ref{holomorphy} we find a sequence $(N_k)$ in $\mathbb{N}$ and a null set $N_u \subset G$ such that for all
 $\omega \notin N \cup N_u$ we have  $\lim_{k\to \infty}\frac{f^{N_{k}}_{\omega}*P_{u}}{u+i\cdot}=\frac{f_{\omega}*P_{u}}{u+i\cdot}$
   in $L_2(\mathbb{R})$. Hence for every $\omega \notin N \cup N_u$   there is a subsequence $(N_{k_{j}})_{j}$ such that for almost all $t \in \R$
   \[
   D^{\omega}(u+it)=\lim_{j \to \infty} \sum_{n=1}^{N_{k_{j}}} a_{n}\omega(\lambda_{n})e^{-u\lambda_{n}}e^{-\lambda_{n}it}=f_{\omega}*P_{u}(t) =F_\omega(u+it)\,.
   \]
But for $\omega \notin N \cup N_u$ both functions $D^{\omega}$ and  $F_{\omega}$ are  continuous on $[Re=u]$, and so with $M:=N\cup N_{u}$ we have $D^{\omega}=F_{\omega}$ on $[Re=L(\lambda)+1]$ for all $\omega \notin M$. Finally, the identity principle  from complex analysis  implies that $D^{\omega}=F_{\omega}$ on $[Re>\max\{0,\sigma_{c}(D^{\omega})\}]$ for all $\omega \notin M$.
\end{proof}

We obtain as an immediate consequence the following variant
of an important theorem on general $\lambda$-Dirichlet series in $\mathcal{H}_2(\lambda)$
due to Helson  from \cite{Helson3}.

\begin{Theo} \label{helson} Assume that $L(\lambda)<\infty$ and  $\mathcal{D}^{ext}_{\infty}(\lambda)=\mathcal{D}_{\infty}(\lambda)$, and let   $D \in \mathcal{H}_{\infty}(\lambda)$. Then for all $\lambda$-Dirichlet groups $(G, \beta)$ and for almost all $\omega \in G$ the vertical limits $D^\omega$   converge on $[Re >0]$.

 Moreover, if $f \in H_{\infty}^{\lambda}(G)$ in view of Bohr's map $\Bcal$ corresponds to $D$, then
for almost all $\omega \in G$ and all $u+it \in [Re>0]$
 \begin{equation} \label{integralformula}
P_{u}*f_{\omega}(t)= D^\omega(u+it).
\end{equation}

\end{Theo}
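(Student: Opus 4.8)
The plan is to read this theorem off as an essentially immediate corollary of the preceding lemma, the only genuinely new ingredient being an abscissa bound that I extract from the standing hypothesis $\mathcal{D}^{ext}_\infty(\lambda)=\mathcal{D}_\infty(\lambda)$. First I would note that the data of the theorem fit verbatim into the setup of that lemma: by definition of the Bohr map $\Bcal$, the Dirichlet series $D\in\mathcal{H}_\infty(\lambda)$ associated to $f\in H_\infty^\lambda(G)$ has coefficients $a_n=\widehat{f}(h_{\lambda_n})$, and $L(\lambda)<\infty$ is assumed. Thus the lemma applies directly to this pair $f$ and $D$.

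Applying it, there is a null set outside of which $D^\omega\in\mathcal{D}^{ext}_\infty(\lambda)$ and the identity $f_\omega*P_u(t)=D^\omega(u+it)$ holds on the half plane $[\mathrm{Re}>\max\{0,\sigma_c(D^\omega)\}]$. The gap between this and the assertion of the theorem — convergence of $D^\omega$ on all of $[\mathrm{Re}>0]$, together with the validity of \eqref{integralformula} on all of $[\mathrm{Re}>0]$ — collapses to a single claim: $\sigma_c(D^\omega)\le 0$ for almost all $\omega$.

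This is where I would invoke the hypothesis $\mathcal{D}^{ext}_\infty(\lambda)=\mathcal{D}_\infty(\lambda)$. Since $D^\omega\in\mathcal{D}^{ext}_\infty(\lambda)=\mathcal{D}_\infty(\lambda)$ for almost all $\omega$, and since membership in $\mathcal{D}_\infty(\lambda)$ means by definition that the series converges (to a bounded holomorphic function) on the open right half plane, we obtain $\sigma_c(D^\omega)\le 0$ precisely on this full-measure set. Hence $\max\{0,\sigma_c(D^\omega)\}=0$, which simultaneously yields the convergence statement and upgrades the lemma's identity to the whole half plane $[\mathrm{Re}>0]$, establishing \eqref{integralformula}.

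I do not expect a serious obstacle here, since the analytic heart of the matter — the holomorphy furnished by Proposition~\ref{holomorphy} and the pointwise identification of $f_\omega*P_u$ with the (typical-mean) partial sums of $D^\omega$ — has already been carried out in the lemma. The only point deserving care is conceptual rather than technical: $\mathcal{D}^{ext}_\infty(\lambda)$ guarantees only a bounded holomorphic \emph{extension}, not convergence of the series itself, so without the equality $\mathcal{D}^{ext}_\infty(\lambda)=\mathcal{D}_\infty(\lambda)$ one could not conclude $\sigma_c(D^\omega)\le 0$. It is exactly this hypothesis — made testable by Theorem~\ref{conditions2} — that turns the bounded extension into genuine convergence on $[\mathrm{Re}>0]$.
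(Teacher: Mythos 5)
Your proposal is correct and is exactly the argument the paper intends: the theorem is stated there as an "immediate consequence" of the preceding lemma, and the only step to supply is precisely the one you identify, namely that $\mathcal{D}^{ext}_{\infty}(\lambda)=\mathcal{D}_{\infty}(\lambda)$ upgrades $D^\omega\in\mathcal{D}^{ext}_{\infty}(\lambda)$ to $\sigma_c(D^\omega)\le 0$, so that $\max\{0,\sigma_c(D^\omega)\}=0$ and the lemma's identity holds on all of $[\mathrm{Re}>0]$.
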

In \cite{DefantSchoolmann3} we extend Theorem \ref{helson} to $\mathcal{H}_{p}(\lambda)$, $1\le p < \infty$, adding the relevant maximal inequality.

\medskip

Finally, we close the circle and prove

\begin{proof}[Proof of Theorem~\ref{mainresultinfty}]  Take $f \in H_{\infty}^{\lambda}(G)$, and let
$D\in \mathcal{H}_{\infty}(\lambda)$ be the associated  $\lambda$-Dirichlet series  under the Bohr map $\mathcal{B}$.
From Theorem \ref{helson} we know that there is some  $\omega \in G$ such that $D_{\omega} \in \mathcal{D}_{\infty}(\lambda)$ with $\|D_{\omega}\|_{\infty}\le \|f\|_{\infty}$. Then, applying Proposition~\ref{rotation} with $\omega^{-1}$, we obtain $D \in \mathcal{D}_{\infty}(\lambda)$ and $\|D\|_{\infty}\le \|f\|_{\infty}$.
 \end{proof}
\subsection{Schauder basis} \label{Schauder}
It is known that $(n^{-s})$ is a Schauder basis for $\Hcal_{p}(\log(n))$ in the range $1<p<\infty$ (see \cite{Saksman}). This result extends to arbitrary frequencies $\lambda$.

\begin{Theo} \label{schauderbasisinHp}
Let $1 < p < \infty$ and $\lambda$ be a frequency. Then the monomials $(e^{-\lambda_{n} s})$ form a Schauder basis for $\Hcal_{p}(\lambda)$.
\end{Theo}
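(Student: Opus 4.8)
The plan is to verify the classical criterion for Schauder bases via uniform boundedness of the partial-sum projections. Since $Pol(\lambda)$ is dense in $\Hcal_{p}(\lambda)$ (Theorem~\ref{comp_pol}) and the coefficient functionals $D=\sum a_n e^{-\lambda_n s}\mapsto a_n$ are continuous, it suffices to show that the operators
\[
S_{N}\colon \sum_{n} a_{n}e^{-\lambda_{n}s}\,\longmapsto\, \sum_{n=1}^{N} a_{n}e^{-\lambda_{n}s}
\]
are uniformly bounded on $\Hcal_{p}(\lambda)$; then $S_N D\to D$ on the dense set of polynomials forces convergence everywhere, and the basis property follows. So the whole theorem reduces to the single estimate $\sup_N\|S_N\|<\infty$.

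I would fix an arbitrary $\lambda$-Dirichlet group $(G,\beta)$ (say $\overline{\R}$) and transport the problem to $H_{p}^{\lambda}(G)$ through the Bohr map $\Bcal$, under which $S_N$ becomes the Fourier projection of $f\in H_p^\lambda(G)$ onto $\{h_{\lambda_1},\dots,h_{\lambda_N}\}$. The idea is to realise this finite projection as a difference of two \emph{order projections}. Identifying $\widehat{G}$ with a subgroup of $\R$ via $\widehat\beta$, and using that $G$ is connected (so $\widehat G$ is torsion-free and the order inherited from $\R$ is total), let $P\colon L_{p}(G)\to L_{p}(G)$ be the Riesz (analytic) projection retaining the strictly positive frequencies. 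Since every $\lambda_n\ge 0$, for $f\in H_p^\lambda(G)$ the operator $S_N f$ keeps exactly the frequencies in $[0,\lambda_N]$, and one checks that
\[
S_{N}=\big(P_{\ge 0}-M_{\lambda_N}\,P\,M_{\lambda_N}^{-1}\big)\big|_{H_{p}^{\lambda}(G)}\,,
\]
where $M_{\lambda_N}$ denotes multiplication by the character $h_{\lambda_N}$ (an isometry of $L_p(G)$, available because $\lambda_N\in\widehat\beta(\widehat G)$), and $P_{\ge0}$ differs from $P$ only by the norm-one averaging projection $f\mapsto(\int_G f)\,\mathbf 1$ onto the constants. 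Consequently $\|S_N\|\le 2\|P\|+1$ uniformly in $N$, and everything reduces to the boundedness of the order Riesz projection $P$ on $L_{p}(G)$ for $1<p<\infty$.

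The main obstacle is exactly this last point, and it is where connectedness of $G$ is genuinely used: it is what guarantees that $\widehat G$ can be ordered, so that a consistent notion of ``positive frequency'' and hence the projection $P$ exists. I would establish $\|P\|_{L_p(G)}<\infty$ by transference from the real line. The Hilbert transform is bounded on $L_{p}(\R)$ for $1<p<\infty$, so its symbol $\xi\mapsto -i\,\sign(\xi)$ is an $L_p(\R)$-multiplier, and $R^{+}=\tfrac12(\Id+iH)$ realises the half-line projection $\chi_{(0,\infty)}$. Using the continuous dense-range homomorphism $\beta\colon\R\to G$, the averaging identity of Proposition~\ref{basic}, and the restriction device of Lemma~\ref{restrictiontoreals}, a Coifman--Weiss type transference shows that the restricted symbol $\gamma\mapsto -i\,\sign(\widehat\beta(\gamma))$ is bounded on $L_{p}(G)$ with the same norm; equivalently the conjugation operator on $G$ is bounded, and hence so is $P$ (up to the bounded projection onto constants). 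The technical heart is carrying out this transference carefully: the restrictions $t\mapsto f(\omega\beta(t))$ are almost periodic and not integrable on $\R$, so one passes through the Poisson regularisations $p_{u}*f$ of Lemma~\ref{limituto0} and lets $u\to 0$. Once $\|P\|_{L_p(G)}<\infty$ is secured, the uniform bound on the $S_N$, and thus the Schauder basis property, are immediate.
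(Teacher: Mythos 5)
Your argument is correct and is structurally the same as the paper's: reduce to uniform boundedness of the partial-sum projections on the dense set of polynomials, and realise the tail (equivalently the head) of the sum as a character-shifted Riesz projection with respect to the order that $\widehat{\beta}\colon\widehat{G}\hookrightarrow\R$ induces on $\widehat{G}$, connectedness of $G$ being what makes this order available. Your identity $S_{N}=\bigl(P_{\ge 0}-M_{\lambda_N}\,P\,M_{\lambda_N}^{-1}\bigr)\big|_{H_{p}^{\lambda}(G)}$ is exactly the paper's manipulation $\sum_{k=n+1}^{m}a_{k}h_{\lambda_{k}}=h_{\lambda_{n+1}}\Phi\bigl(h_{-\lambda_{n+1}}\sum_{k=1}^{m}a_{k}h_{\lambda_{k}}\bigr)$ in operator form. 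The one genuine divergence is the last step: the paper simply cites Rudin's Theorem 8.7.2 for the $L_{p}$-boundedness, $1<p<\infty$, of the Riesz projection on a compact connected abelian group with ordered dual, whereas you propose to re-derive this by Coifman--Weiss/de Leeuw transference of the symbol $-i\,\sign(\xi)$ from $L_{p}(\R)$ through $\beta$. That route can be made to work, but it is the only place where your sketch leaves substantial work: the symbol is discontinuous at $0$ (which sits in $\widehat{\beta}(\widehat{G})$), so the restriction theorem does not apply off the shelf and you must treat the origin separately (harmless, as it only contributes the rank-one projection onto constants) and justify the passage through the Poisson regularisations for the non-integrable almost periodic restrictions. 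Citing the known theorem, as the paper does, short-circuits all of this.
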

\begin{proof} Let $(G, \beta)$ be any $\lambda$-Dirichlet group. We then claim that $(h_{\lambda_{n}})$ is a Schauder basis for $H^{\lambda}_{p}(G)$. By Proposition~\ref{densitytrigonometric} it suffices to verify, that there is a constant $C\ge 1$, such that for all $m>n$ and for all $a_{1}, \dotsc, a_{m} \in \C$
\begin{equation*}
\Big\|\sum_{n=1}^{n} a_{k} h_{\lambda_{k}}\Big\|_{p} \le C \Big\|\sum_{n=1}^{m} a_{k} h_{\lambda_{k}}\Big\|_{p}.
\end{equation*}
By Proposition~\ref{hx} there is a natural order on $\widehat{G}$: We call a character
$h_{x}, x \in \widehat{\beta}(\widehat{G}),$ positive if $x \ge 0$. Then the 'Riesz projection'
 \begin{equation*}
\Phi\left(\sum a_{k} h_{x_{k}}\right):=\sum_{x_{k}\ge 0} a_{k} h_{x_{k}}
\end{equation*}
 is bounded on $Pol(G) \subset L_p(G)$ with norm $C=C(p)>0$ (see \cite[Theorem 8.7.2.]{Rudin62}, here connectedness of $G$ is needed), and hence  for  $m> n$ and each $f:=\sum_{k=1}^{m} a_{k} h_{\lambda_{k}} \in Pol_{\lambda}(G)$  we as desired have
\begin{align*}
\Big\|\sum_{k=1}^{n}a_{k}h_{\lambda_{k}}\Big\|_{p}& \le \Big\|\sum_{k=1}^{m}a_{k}h_{\lambda_{k}}\Big\|_{p}+\Big\|\sum_{k=n+1}^{m}a_{k}h_{\lambda_{k}}\Big\|_{p} \\ &=\Big\|\sum_{k=1}^{m}a_{k}h_{\lambda_{k}}\Big\|_{p}+\Big\|h_{\lambda_{n+1}}\Phi\left(h_{-\lambda_{n+1}} \sum_{k=1}^{m} a_{k} h_{\lambda_{k}}\right)\Big\|_{p}\\ &\le (1+C)\Big\|\sum_{k=1}^{m} a_{k} h_{\lambda_{k}}\Big\|_{p}.\qedhere
\end{align*}
\end{proof}
An equivalent formulation of  Theorem \ref{schauderbasisinHp}  is that
for $1<p<\infty$
all projections
\[
\pi^N_p: \mathcal{H}_p(\lambda) \to \mathcal{H}_p(\lambda),\,\,
\sum a_n e^{-\lambda_n s} \mapsto \sum_{n=1}^N a_n e^{-\lambda_n s}
\]
 are uniformly bounded. But for the border cases  $p=1$ and $p=\infty$ this in general is false
 (e.g., for the frequencies  $\lambda = (\log n)$  or $\lambda= (n)$). We give an upper bound for the growth of the partial sum operators in $\Hcal_{1}(\lambda)$; thereby the quality of the upper bounds depends on the quality of $\lambda$.
\begin{Prop} \label{basisconstant} Let $\lambda$ be a frequency
and $p=1$ or $\infty$. Assuming $(BC)$ for $\lambda$ there is a constant $C=C(\lambda)$ such that for all $N$
\[
\|\pi^N_p: \mathcal{H}_p(\lambda) \to \mathcal{H}_p(\lambda)\|\leq C \lambda_{N}\,,
\]
and, assuming $(LC)$ and $L(\lambda)<\infty$, for every $\delta$ there is a constant $C_{1}=C_{1}(\delta,\lambda)$ such that  for all $N$
\[
\|\pi^N_p: \mathcal{H}_p(\lambda) \to \mathcal{H}_p(\lambda)\| \le C_{1}e^{\delta\lambda_{N}}\,.
\]
\end{Prop}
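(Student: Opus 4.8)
The plan is to realise every partial sum operator $\pi^N_p$ as convolution on the group with a single measure, and then to control the total variation of that measure. Fix a $\lambda$-Dirichlet group $(G,\beta)$ and recall from Definition~\ref{zugcrash} that $\mathcal{H}_p(\lambda)$ is isometric to $H_p^\lambda(G)$ via the Bohr map $\mathcal{B}$; so it suffices to bound, on $H_p^\lambda(G)$, the multiplier that keeps the Fourier coefficients at $h_{\lambda_1},\dots,h_{\lambda_N}$ and annihilates all others. The idea is to produce a single $k\in L_1(\R)$ whose Fourier transform satisfies $\widehat{k}(\lambda_n)=1$ for $n\le N$ and $\widehat{k}(\lambda_n)=0$ for $n>N$. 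Pushing $k\,d\lambda$ forward to $G$ by Lemma~\ref{measuresembedd} yields a measure $\mu$ with $\|\mu\|\le\|k\|_{L_1(\R)}$ and $\widehat{\mu}(h_x)=\widehat{k}(x)$ for all $x\in\widehat{\beta}(\widehat{G})$. Then for $f\in H_p^\lambda(G)$ the convolution $f*\mu$ again lies in $H_p^\lambda(G)$, has Fourier coefficients $\widehat{f}(h_{\lambda_n})\widehat{k}(\lambda_n)$, and hence is precisely the polynomial corresponding to $\pi^N_p(\mathcal{B}(f))$. Since convolution with a measure satisfies $\|f*\mu\|_p\le\|\mu\|\,\|f\|_p$ for every $1\le p\le\infty$ (Young's inequality), this gives
\[
\|\pi^N_p\colon \mathcal{H}_p(\lambda)\to\mathcal{H}_p(\lambda)\|\le\|\mu\|\le\|k\|_{L_1(\R)}
\]
for all $p$, reducing everything to constructing $k$ with a small $L_1$-norm. (This works for every $p$; for $1<p<\infty$ Theorem~\ref{schauderbasisinHp} already gives a uniform bound, so only $p=1,\infty$ are of genuine interest here.)

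To build $k$ I would prescribe $\widehat{k}$ to be a trapezoidal multiplier $\psi$ adapted to the gap $r:=\lambda_{N+1}-\lambda_N$: let $\psi$ equal $1$ on the plateau $[0,\lambda_N]$, rise linearly from $0$ to $1$ on $[-r,0]$, fall linearly from $1$ to $0$ on $[\lambda_N,\lambda_{N+1}]$, and vanish elsewhere. Since $\lambda_1\ge0$, every $\lambda_n$ with $n\le N$ lies in the plateau and every $\lambda_n$ with $n>N$ lies in $[\lambda_{N+1},\infty)$ where $\psi=0$, so $\psi$ does exactly the required job on the relevant frequencies. Writing $\psi$ as the convolution of the indicator of an interval of length $W:=\lambda_N+r$ with a normalised box of width $r$, its inverse transform $k$ factors as a product of two sine-kernels and is continuous with $O(t^{-2})$ decay, hence $k\in L_1(\R)$.

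The crux of the argument is the estimate
\[
\|k\|_{L_1(\R)}\lesssim 1+\log\!\Big(\tfrac{\lambda_N}{\lambda_{N+1}-\lambda_N}\Big)\,,
\]
which I would obtain by splitting $\int_\R|k|$ at the two scales $|t|\sim 1/W$ and $|t|\sim 1/r$: each of the two outer regions contributes $O(1)$, while the middle region $1/W\lesssim|t|\lesssim 1/r$ produces the logarithm $\log(W/r)$. This is the standard de la Vallée Poussin / Dirichlet-kernel computation, and it is the main technical obstacle, because everything depends on getting the dependence on the gap to be only \emph{logarithmic} (a crude bound such as $W/r$ would be far too weak for either conclusion).

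Finally I would insert the hypotheses on $\lambda$. Under $(BC)$, fixing some $\delta$ gives $\lambda_{N+1}-\lambda_N\ge Ce^{-(l+\delta)\lambda_N}$, whence $\log(\lambda_N/(\lambda_{N+1}-\lambda_N))\le (l+\delta)\lambda_N+\log\lambda_N-\log C\lesssim\lambda_N$, so $\|\pi^N_p\|\le C(\lambda)\,\lambda_N$. Under $(LC)$, for a prescribed $\delta>0$ the gap bound $\lambda_{N+1}-\lambda_N\ge Ce^{-e^{\delta\lambda_N}}$ gives $\log(\lambda_N/(\lambda_{N+1}-\lambda_N))\le e^{\delta\lambda_N}+\log\lambda_N-\log C\lesssim e^{\delta\lambda_N}$, so $\|\pi^N_p\|\le C_1(\delta,\lambda)\,e^{\delta\lambda_N}$; here only the gap estimate coming from $(LC)$ is used. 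In both regimes the finitely many small $N$ (where the gap may exceed $1$) are absorbed into the respective constants.
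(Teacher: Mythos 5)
Your argument is correct, but it is not the route the paper takes, so a comparison is in order. The paper treats $p=\infty$ by citing a careful quantitative analysis of the classical partial-sum estimates of Bohr and Landau carried out in the companion paper \cite[\S 3]{DefantSchoolmann2} (essentially Perron-type contour-integral bounds for bounded Dirichlet series), and then handles $p=1$ by reducing it to $p=\infty$: it embeds $\mathcal{H}_1(\lambda)$ isometrically into the vector-valued space $\mathcal{D}_\infty(\lambda,\mathcal{H}_1(\lambda))$ via Lemma~\ref{embedding} and invokes the Hahn--Banach transfer of the scalar $p=\infty$ bound to that setting (Remark~\ref{nibe}). You instead realise $\pi^N_p$ directly as convolution on the Dirichlet group with the push-forward (Lemma~\ref{measuresembedd}) of a de la Vall\'ee Poussin--type kernel on $\R$ whose transform is a trapezoid with plateau $[0,\lambda_N]$ and ramp width $\lambda_{N+1}-\lambda_N$; the resulting bound $\|\pi^N_p\|\le \|k\|_{L_1(\R)}\lesssim 1+\log\bigl(\lambda_N/(\lambda_{N+1}-\lambda_N)\bigr)$ is then fed the gap conditions $(BC)$ and $(LC)$. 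Your approach buys several things: it is self-contained (no appeal to the external Bohr/Landau analysis and no vector-valued machinery), it treats all $1\le p\le\infty$ uniformly in one stroke, it exposes a sharper intermediate estimate that is logarithmic in the reciprocal gap and uses only the single gap at index $N$, and it dispenses with the hypothesis $L(\lambda)<\infty$ in the $(LC)$ case. What the paper's route buys is economy given its companion results, which are needed elsewhere anyway. Two small points to tidy up: the $L_1$-estimate for the trapezoidal kernel is the technical heart and you only sketch it, though it is the standard computation $\int|k|\lesssim\int\min(W,|t|^{-1})\min(1,(r|t|)^{-1})\,dt$ and is certainly correct; and, as with the paper's own statement, the bound $C\lambda_N$ is vacuous if $\lambda_1=0$, so the finitely many small indices must be handled by adjusting the constant under the tacit assumption $\lambda_N>0$ --- your remark about absorbing small $N$ should be phrased with that caveat.
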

The case $p=\infty$  follows from a careful analysis of results due to Bohr \cite{Bohr} and Landau \cite{Landau}, and
this was done in  \cite[\S 3]{DefantSchoolmann2}. The case $p=1$,
in a sense, reduces to the case $p=\infty$, and in the following we sketch this argument.

\begin{Rema} \label{nibe}
Given a Banach space $X$, a straightforward  Hahn-Banach  argument shows
that for $p=\infty$  Proposition~\ref{basisconstant} transfers to $\mathcal{D}_\infty(\lambda, X)$, i.e. under the assumptions  of (BC) or [(LC) and $L(\lambda)<\infty$],  the projection on  $\mathcal{D}_\infty(\lambda, X)$ which assigns to each $D$ its  $N$th partial sum, is  bounded by a universal  constant times $\lambda_{N}$ and $e^{\delta\lambda_{N}}$, respectively.
\end{Rema}

\begin{proof}[Proof of Proposition \ref{basisconstant}]
Together with Lemma~\ref{embedding} and Remark~\ref{nibe} we obtain assuming $(BC)$ that for all
$D \in \mathcal{H}_p(\lambda)$
$$\left\| \sum_{n=1}^{N} a_{n}e^{-\lambda_{n}s} \right\|_{1} =
\sup_{\text{Re} z >0}
\left\| \sum_{n=1}^{N} \left(a_{n}e^{-\lambda_{n}\cdot}\right) e^{-\lambda_{n}z} \right\|_{\infty}\le C\lambda_{N}  \|D\|_{1}\,.$$
Under the  assumption[(LC) and $L(\lambda)<\infty$] the result follows similarly.
\end{proof}

\smallskip

\subsection{Montel  theorem}
Bayart proved in \cite[Lemma 18]{Bayart} that for every bounded sequence $(D^{N})_{N} \subset \mathcal{D}_{\infty}((\log n))$ there is a subsequence $(D^{N_{k}})_{k}$ and some $D \in \mathcal{D}_{\infty}((\log n))$ such that $(D^{N_{k}})_{k}$ converges to $D$ on $[Re> \varepsilon]$ for all $\varepsilon>0$. In \cite[\S 4.4]{DefantSchoolmann2} we extend this Montel type theorem  to $\mathcal{D}_{\infty}(\lambda)$. Here, using Lemma~\ref{embedding}, we transport this result to $\Hcal_{p}(\lambda)$.

\begin{Theo}\label{montel} Assume that  $\lambda$ satisfies $L(\lambda)=0$, or [(LC) and $L(\lambda)<\infty$], or  is  $\mathbb{Q}$-linearly independent. Given $1\le p \le \infty$, let $(D^{N})_{N} \subset \Hcal_{p}(\lambda)$ be a bounded sequence. Then there is a subsequence $(D^{N_{k}})_{k}$ and some $D \in \Hcal_{p}(\lambda)$ such that the translations
$D^{N_{k}}_{\varepsilon}$ converge to $D$ in $\Hcal_{p}(\lambda)$ for all $\varepsilon>0$.

\end{Theo}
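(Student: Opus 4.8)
The plan is to transport the scalar Montel theorem for $\mathcal{D}_\infty(\lambda)$ from \cite[\S 4.4]{DefantSchoolmann2} to $\mathcal{H}_p(\lambda)$ through the isometric embedding of Lemma~\ref{embedding}, exploiting that the coefficients of the embedded series live in fixed one-dimensional subspaces. First I would record that under each of the three hypotheses the space $\mathcal{D}_\infty(\lambda)$ is complete: for $L(\lambda)=0$ and for $\mathbb{Q}$-linearly independent $\lambda$ this is Theorem~\ref{conditions}, and for [(LC) and $L(\lambda)<\infty$] one combines Theorem~\ref{conditions2} (which gives $\mathcal{D}^{ext}_\infty(\lambda)=\mathcal{D}_\infty(\lambda)$) with Theorem~\ref{conditions}(3). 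This completeness is exactly what turns $\mathcal{D}_\infty(\lambda,X)$ into a Banach space and what Lemma~\ref{embedding} requires.

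For $p=\infty$ the three hypotheses force $\mathcal{H}_\infty(\lambda)=\mathcal{D}_\infty(\lambda)$: via Theorem~\ref{mainresultinfty} when $L(\lambda)<\infty$ and $\mathcal{D}^{ext}_\infty(\lambda)=\mathcal{D}_\infty(\lambda)$, and for $\mathbb{Q}$-linearly independent $\lambda$ from the identification $\mathcal{H}_\infty(\lambda)=\mathcal{D}_\infty(\lambda)=\ell_1(\N)$ recalled before Corollary~\ref{Qlin}. So the statement is literally the scalar Montel theorem: a bounded sequence in $\mathcal{D}_\infty(\lambda)$ admits a subsequence $(D^{N_k})_k$ converging to some $D\in\mathcal{D}_\infty(\lambda)$ uniformly on each $[Re>\varepsilon]$; and uniform convergence on $[Re>\varepsilon]$ means precisely $\|D^{N_k}_\varepsilon-D_\varepsilon\|_\infty\to 0$, i.e. convergence of the translates in $\mathcal{H}_\infty(\lambda)$.

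For $1\le p<\infty$ I would set $X:=\mathcal{H}_p(\lambda)$ and apply Lemma~\ref{embedding} to view $(\Phi(D^N))_N$ as a bounded sequence in $\mathcal{D}_\infty(\lambda,X)$, with $\|\Phi(D^N)\|_\infty=\|D^N\|_p$. The crucial structural point is that the $n$th coefficient of $\Phi(D^N)$ equals $a_n(D^N)\,e^{-\lambda_n s}$, a scalar multiple of the fixed unit vector $e^{-\lambda_n s}\in X$. Since on $\mathcal{H}_p(\lambda)$ the coefficient functionals are contractive (as $|a_n(D)|=|\widehat{f}(h_{\lambda_n})|\le\|f\|_1\le\|f\|_p$), the scalars $(a_n(D^N))_N$ are bounded for each fixed $n$, so a diagonal argument produces a subsequence with $a_n(D^{N_k})\to a_n$ for all $n$; because the coefficients lie in one-dimensional subspaces this is the same as convergence of the $X$-valued coefficients $a_n(D^{N_k})e^{-\lambda_n s}\to a_n e^{-\lambda_n s}$ in $X$. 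Running the argument of \cite[\S 4.4]{DefantSchoolmann2} \emph{verbatim} in the $X$-valued setting — all tail and equicontinuity estimates involve only the scalar coefficient norms $\|a_n(D^{N_k})e^{-\lambda_n s}\|_X=|a_n(D^{N_k})|$ and the geometry of $\lambda$ — would then yield some $E\in\mathcal{D}_\infty(\lambda,X)$, with $E(z)=\sum_n(a_n e^{-\lambda_n s})e^{-\lambda_n z}$, such that $\Phi(D^{N_k})\to E$ uniformly on every $[Re>\varepsilon]$.

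Finally I would read off the conclusion by evaluating at real points. For $\varepsilon>0$ one has $\Phi(D^{N_k})(\varepsilon)=D^{N_k}_\varepsilon$ and $E(\varepsilon)=D_\varepsilon$, where $D:=\sum a_n e^{-\lambda_n s}$, so uniform convergence on $[Re>\varepsilon/2]$ gives $D^{N_k}_\varepsilon\to D_\varepsilon$ in $X=\mathcal{H}_p(\lambda)$. That $D$ itself belongs to $\mathcal{H}_p(\lambda)$ follows from Theorem~\ref{translationI}: every translate $D_z=E(z)$ lies in $\mathcal{H}_p(\lambda)$ and $\sup_{z\in[Re>0]}\|D_z\|_p=\sup_z\|E(z)\|_X\le\|E\|_\infty<\infty$, so $D\in\mathcal{H}^+_p(\lambda)=\mathcal{H}_p(\lambda)$. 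The main obstacle is the middle step, namely justifying that the scalar Montel argument transfers to $\mathcal{D}_\infty(\lambda,X)$. This is where the one-dimensionality of the coefficient subspaces is essential: it both supplies coefficient convergence in $X$ (impossible for a general bounded $X$-valued sequence) and reduces every quantitative estimate to its scalar counterpart from \cite{DefantSchoolmann2}.
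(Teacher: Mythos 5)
Your proposal is correct and, for $1\le p<\infty$, follows the paper's proof step for step: embed via Lemma~\ref{embedding} into $\mathcal{D}_{\infty}(\lambda,\Hcal_{p}(\lambda))$, extract a coefficient-convergent subsequence by a diagonal argument, show that the embedded series converge uniformly on half-planes to some $E\in\mathcal{D}_{\infty}(\lambda,\Hcal_{p}(\lambda))$, and conclude with Theorem~\ref{translationI}. Two remarks on where you diverge. First, the step you yourself flag as the `main obstacle' is precisely where the paper is explicit: instead of asserting a verbatim transfer of the scalar Montel argument, it combines the partial-sum projection bounds $\bigl\|\sum_{j\le n}a_{j}^{N_{k}}e^{-\lambda_{j}s}\bigr\|_{p}\le C_{1}e^{\delta\lambda_{n}}\|D^{N_{k}}\|_{p}$ (Theorem~\ref{schauderbasisinHp} for $1<p<\infty$, Proposition~\ref{basisconstant} for $p=1,\infty$) with the vector-valued Bohr--Cahen formula from \cite[Proposition 2.4]{DefantSchoolmann2}. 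Be aware that the one-dimensionality of the coefficient subspaces is what makes the diagonal extraction deliver $X$-valued coefficient convergence, but it is \emph{not} what makes the quantitative estimates transfer: the decisive bound concerns the $\Hcal_{p}$-norm of an entire partial sum, not the individual coefficient norms, and its vector-valued version (Remark~\ref{nibe}) holds for arbitrary Banach spaces by a Hahn--Banach argument, so your justification is slightly misattributed even though the conclusion is right. Second, for $p=\infty$ your direct reduction to the scalar Montel theorem via $\Hcal_{\infty}(\lambda)=\mathcal{D}_{\infty}(\lambda)$ (Theorem~\ref{mainresultinfty}, resp.\ the identification $\Hcal_{\infty}(\lambda)=\mathcal{D}_{\infty}(\lambda)=\ell_{1}(\N)$ in the $\mathbb{Q}$-linearly independent case) is a clean alternative; the paper runs the embedding argument for $p=\infty$ as well, even though Lemma~\ref{embedding} is only stated for $p<\infty$, so your route is arguably tidier there.
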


\begin{proof} We give a proof under the assumption that $\lambda$ satisfies $L(\lambda)<\infty$ and $(LC)$. Let us write $D^{N}(s)=\sum a_{n}^{N}e^{-\lambda_{n}s}$. Then by Lemma \ref{embedding} the Dirichlet series $\Phi(D^{N}) \in \mathcal{D}_{\infty}(\lambda,\mathcal{H}_{p}(\lambda))$ have the coefficients $a^{N}_{n}e^{-\lambda_{n}s} \in \mathcal{H}_{p}(\lambda)$ and so $|a_{n}^{N}|=\|a_{n}^{N}e^{-\lambda_{n}s}\|_{p}\le \|\Phi(D^N)\|_{\infty}\le \sup_{M\in \N} \|D^{M}\|_{p}:=C< \infty$ for all $n, N$. Hence by a diagonal process there is a subsequence $(N_{k})$ such the limits $\lim_{k\to \infty} a_{n}^{N_{k}}=:a_{n}$ exist for all $n $. We define $D:=\sum a_{n}e^{-\lambda_{n}s}$ and claim that $D \in \mathcal{H}_{p}(\lambda)$ and that $\lim_{k \to \infty} D_{\varepsilon}^{N_{k}}=D_{\varepsilon}$ in $\mathcal{H}_{p}(\lambda)$  for all $\varepsilon>0$.
By Lemma \ref{embedding} and Theorem~\ref{schauderbasisinHp} ($1 < p < \infty$) as well as Proposition \ref{basisconstant} ($p=1$ or $\infty$) for all $\delta>0$ there is $C_{1}=C_{1}(\delta, \lambda)$ such that for all $n,k$
\[
\sup_{z \in [Re>0]} \left\|\sum_{j=1}^{n} a_{j}^{N_{k}}e^{-\lambda_{j}s} e^{-\lambda_{j}z}\right\|_{p}
= \left\|\sum_{j=1}^{n} a_{j}^{N_{k}}e^{-\lambda_{j}s}\right\|_{p}
\le C_{1} e^{\delta\lambda_{n}}\|D^{N_{k}}\|_{p}\le C_{1} e^{\delta \lambda_{n}}C.
\]
Then the Bohr-Cahen formula for sequences of Dirichlet series from \cite[Proposition 2.4.]{DefantSchoolmann2} (for vector valued Dirichlet series
the formula follows as in  the scalar case  replacing the absolute value by the norm) implies that
 $E(z):=\sum a_{n}e^{-\lambda_{n} s} e^{-\lambda_{n}z}$ converges on $[Re>0]$ and that $\Phi(D^{N_{k}})$ converges to $E$ uniformly on $[Re>\varepsilon]$ for all $\varepsilon>0$. Hence $E\in \mathcal{D}_{\infty}(\lambda, \mathcal{H}_{p}(\lambda))$. Since $\Phi(D^{N_{k}}_{\varepsilon})(z)=\Phi(D^{N_{k}})(\varepsilon+z)$ on $[Re>0]$, this implies, again using Lemma \ref{embedding}, that $(D^{N_{k}}_{\varepsilon})_{k}$ is Cauchy in $\mathcal{H}_{p}(\lambda)$ with limit $D_{\varepsilon} \in \mathcal{H}_{p}(\lambda)$ and $\|D_{\varepsilon}\|_{p}\le C$ for all $\varepsilon>0$.
Hence by Theorem \ref{translationI} we have as desired that
$D \in \mathcal{H}_{p}(\lambda)$ and  $\lim_{k \to \infty} D_{\varepsilon}^{N_{k}}=D_{\varepsilon}$ in $\mathcal{H}_{p}(\lambda)$  for all $\varepsilon>0$. The remaining cases ($\lambda$ satisfies $L(\lambda)=0$ or   is  $\mathbb{Q}$-linearly independent) follow with the same strategy using the corresponding quantitative version of Bohr's theorem from \cite{DefantSchoolmann2}.
\end{proof}

As an immediate consequence  we state the counterpart in $H_{p}^{\lambda}(G)$.
\begin{Coro}
\label{schonwieder}
Assume that $\lambda$ satisfies  the assumptions of Theorem \ref{montel}, and $1 \leq p \leq \infty$. Then for  every bounded sequence $(f^{N})_{N} \subset H_{p}^{\lambda}(G)$ there are  a subsequence $(f^{N_{k}})_{k}$ and a function $f \in H_{p}^{\lambda}(G)$ such that $p_{\sigma}*f^{N_{k}}$ converges to $p_{\sigma}*f$ for all $\sigma >0$.
\end{Coro}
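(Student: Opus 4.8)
The plan is to transport the Montel theorem in the form of Theorem~\ref{montel} from $\Hcal_p(\lambda)$ to $H_p^\lambda(G)$ through the Bohr map, since the corollary is nothing but the group-side reformulation of that theorem. Recall that for the chosen $\lambda$-Dirichlet group $(G,\beta)$ the Bohr map $\Bcal\colon H_p^\lambda(G)\to \Hcal_p(\lambda)$ is an isometric onto isomorphism (Definition~\ref{zugcrash}), so it sends bounded sequences to bounded sequences and preserves all norm limits. The only genuine content is to match the operation $f\mapsto p_\sigma * f$ on the group side with the horizontal translation $D\mapsto D_\sigma$ on the Dirichlet-series side, and this intertwining is exactly the statement $\Bcal(p_u*f)=D_u$ of Lemma~\ref{limituto0}.

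First I would set $D^N:=\Bcal(f^N)\in\Hcal_p(\lambda)$. Since $\Bcal$ is an isometry and $(f^N)_N$ is bounded, $(D^N)_N$ is a bounded sequence in $\Hcal_p(\lambda)$. Applying Theorem~\ref{montel} (whose hypotheses on $\lambda$ are assumed) yields a subsequence $(D^{N_{k}})_k$ and a Dirichlet series $D\in\Hcal_p(\lambda)$ such that $D^{N_{k}}_{\varepsilon}\to D_{\varepsilon}$ in $\Hcal_p(\lambda)$ for every $\varepsilon>0$. I would then take $f:=\Bcal^{-1}(D)\in H_p^\lambda(G)$ as the candidate function, and read off the asserted convergence as follows: by Lemma~\ref{limituto0}, for each fixed $\sigma>0$ we have $\Bcal(p_\sigma * f^{N_{k}})=D^{N_{k}}_{\sigma}$ and $\Bcal(p_\sigma * f)=D_{\sigma}$, whence, using once more that $\Bcal$ is isometric,
\[
\|p_\sigma * f^{N_{k}}-p_\sigma * f\|_p=\|D^{N_{k}}_{\sigma}-D_{\sigma}\|_p\longrightarrow 0 \quad (k\to\infty)\,,
\]
which is precisely the claim. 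The same argument works verbatim for every $1\le p\le\infty$, as $\Bcal$ is isometric in the full range.

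I do not expect a real obstacle, since the statement is an immediate corollary; the work lies entirely in the bookkeeping of the two dictionaries. The two points requiring care are: (i) that Lemma~\ref{limituto0} is indeed available for the given group $(G,\beta)$ and correctly identifies $p_\sigma*(\cdot)$ with the translation $(\cdot)_\sigma$ under $\Bcal$; and (ii) interpreting the conclusion of Theorem~\ref{montel} as $D^{N_{k}}_{\varepsilon}\to D_{\varepsilon}$ (the limit genuinely depends on $\varepsilon$), which is what its proof establishes. For the endpoint $p=\infty$ one should note that the convergence provided is in the $\Hcal_\infty(\lambda)$-norm, hence, after applying the isometry $\Bcal^{-1}$, in the $L_\infty(G)$-norm, which is the strongest reading of ``converges'' here.
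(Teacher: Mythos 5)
Your argument is correct and is exactly the route the paper intends: the corollary is stated there as an immediate consequence of Theorem~\ref{montel}, obtained by transporting the statement through the isometric Bohr map and using the intertwining $\Bcal(p_\sigma * f)=D_\sigma$ from Lemma~\ref{limituto0}. Your remark that the conclusion of Theorem~\ref{montel} should be read as $D^{N_k}_\varepsilon\to D_\varepsilon$ (as its proof actually establishes) is also the correct reading.
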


\smallskip
\subsection{$N$th Abschnitte}

As already mentioned the 'ordinary' $\Hcal_{\infty}$ isometrically  equals $H_{\infty}(B_{c_{0}})$,
identifying Dirichlet and monomial coefficients. A cruicial argument in the proof of this result (see \cite[\S 2.3]{Defant}) is that a continuous function $f \colon B_{c_{0}} \to \C$ belongs to $H_{\infty}(B_{c_{0}})$ if and only if all restriction maps $f_{N} \colon \D^{N}\to \C$ belong to $H_{\infty}(\D^{N})$ with $\sup_{N} \|f_{N}\|_{\infty}<\infty$.

Formulated for  ordinary Dirichlet series this says that  $D=\sum a_n n^{-s}\in \Hcal_{\infty}$ if and only if
all of its  so-called $N$-th abschnitte
$
D|_{N} = \sum a_{n} n^{-s}\,,
$
where the sum is taken over those $n$ which have only the first $N$ primes as divisors,
belong to  $\Hcal_{\infty}$ with uniformly bounded norms.
   In more vague terms,  $D \in \mathcal{H}_\infty$ if and only if all its finite dimensional blocks are
    in $\mathcal{H}_\infty$ with uniformly bounded norms.

   This phenomenon is also true for general Dirichlet series. If $\lambda$ is a frequency with decomposition $(R,B)$, then the $N$-th abschnitt $D|_{N}$ of a $\lambda$-Dirichlet series $D$ is $\sum a_{n}(D) e^{-\lambda_{n}s}$, where $a_{n}(D)\ne 0$ implies that $\lambda_{n}\in \operatorname{span}_{\mathbb{Q}}(b_{1},\ldots, b_{N}).$
Looking at  the identification $\mathcal{H}_{p}(\lambda)=H_{p}^{R}(\widehat{\mathbb{Q}_d}^{\infty}), D \mapsto F$,
given in  Theorem~\ref{summary},
this restriction $D|_N$  simply corresponds to
\begin{equation*} \label{projectiontoN}
F|_{N}\colon \widehat{\mathbb{Q}_d}^{N} \to \C\,,\,\,\, F|_{N}(\omega):=\int_{\widehat{\mathbb{Q}_d}^{\infty}} F(\omega, \eta) dm(\eta)\,,
\end{equation*}
the  `restriction' of $F$  to the first $N$ variables. Then an application of H\"{o}lder's inequality gives $\|F|_{N}\|_{p}\le \|F\|_{p}$, and so (again by Theorem~\ref{summary}) $\|D|_{N}\|_{p}\le \|D\|_{p}$.

\begin{Rema}\label{proj} Let $\lambda=(R,B)$, $1\le p\le \infty$ and $D \in \mathcal{H}_{p}(\lambda)$. Then $D|_{N} \in \mathcal{H}_{p}(\lambda)$ with $\|D|_{N}\|_{p}\le \|D\|_{p}$ for all $N\in \N$.
\end{Rema}

\begin{Theo} \label{Nabschnitt}
Assume that  $1\le p \le \infty$,  $\lambda=(R,B)$ is a frequency satisfying one of the conditions of Theorem \ref{montel}, and  $D$  a  $\lambda$-Dirichlet series. Then $D \in \Hcal_{p}(\lambda)$ if and only if  $D|_{N} \in \Hcal_{p}(\lambda)$ for all $N$ and $\sup_{N} \|D|_{N}\|_{p}<\infty$. Moreover, in this case $\|D\|_{p}=\sup \|D|_{N}\|_{p}.$
\end{Theo}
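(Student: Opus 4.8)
The plan is to treat the two implications separately and to observe that the ``only if'' direction, together with one of the two norm inequalities, is already at hand. Indeed, if $D\in\Hcal_p(\lambda)$ then Remark~\ref{proj} gives at once $D|_N\in\Hcal_p(\lambda)$ and $\|D|_N\|_p\le\|D\|_p$ for every $N$, so that $\sup_N\|D|_N\|_p\le\|D\|_p<\infty$. Thus only the converse implication and the reverse norm estimate $\|D\|_p\le\sup_N\|D|_N\|_p$ require work.

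For the ``if'' part I would feed the bounded sequence $(D|_N)_N\subset\Hcal_p(\lambda)$ into the Montel type Theorem~\ref{montel}, which is applicable precisely because $\lambda$ is assumed to satisfy one of its hypotheses. This yields a subsequence $(D|_{N_k})_k$ and some $\tilde D\in\Hcal_p(\lambda)$ whose Dirichlet coefficients are the coefficientwise limits $a_n(\tilde D)=\lim_k a_n(D|_{N_k})$, and such that $(D|_{N_k})_\varepsilon$ converges in $\Hcal_p(\lambda)$ to the corresponding translate of $\tilde D$ for every $\varepsilon>0$. The decisive step is then to identify $\tilde D$ with $D$: writing $\lambda_n=\sum_k r^n_k b_k$ as a finite $\mathbb{Q}$-linear combination of the basis $B$, we have $\lambda_n\in\operatorname{span}_{\mathbb{Q}}(b_1,\dots,b_N)$ for all $N\ge N_0(n)$, whence $a_n(D|_N)=a_n(D)$ for all such $N$. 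Consequently $a_n(\tilde D)=\lim_k a_n(D|_{N_k})=a_n(D)$ for every $n$, so $\tilde D=D$ as formal $\lambda$-Dirichlet series, and in particular $D=\tilde D\in\Hcal_p(\lambda)$.

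It remains to establish $\|D\|_p\le S:=\sup_N\|D|_N\|_p$. Since each horizontal translation $D'\mapsto D'_\varepsilon$ corresponds to convolution of the associated function on $G$ with the Poisson measure $p_\varepsilon$ of total mass one (Lemma~\ref{measuresembedd} and Lemma~\ref{limituto0}), it is contractive, so $\|(D|_{N_k})_\varepsilon\|_p\le\|D|_{N_k}\|_p\le S$. Passing to the limit along the convergence $(D|_{N_k})_\varepsilon\to D_\varepsilon$ (recall $\tilde D=D$) gives $\|D_\varepsilon\|_p\le S$ for every $\varepsilon>0$. Finally I would let $\varepsilon\to 0$: by Lemma~\ref{limituto0} one has $D_\varepsilon\to D$ in $\Hcal_p(\lambda)$ for $1\le p<\infty$, and in the weak star sense for $p=\infty$, so that (using continuity of the norm, respectively its weak star lower semicontinuity) $\|D\|_p\le S$. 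Together with the first inequality this yields the asserted equality $\|D\|_p=\sup_N\|D|_N\|_p$.

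The conceptual heart -- and the only genuine obstacle -- is the passage from mere uniform boundedness of the finite blocks $D|_N$ to the actual membership $D\in\Hcal_p(\lambda)$; this is exactly the service rendered by the Montel theorem, which manufactures a bona fide element of $\Hcal_p(\lambda)$ out of the coefficientwise limit, and it is from there that the restriction on $\lambda$ in the hypotheses is inherited. The coefficient stabilization $a_n(D|_N)=a_n(D)$ and the contractivity of horizontal translations are soft; the one point demanding care is the case $p=\infty$, where the relevant convergences are only weak star and the norm bounds must therefore be read through lower semicontinuity rather than continuity.
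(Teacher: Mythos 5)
Your argument is correct, and for $p=1$ it is exactly the paper's: the paper proves the equivalent group--theoretic reformulation (Theorem~\ref{Nabschnitt2}), and there the case $p=1$ is handled precisely by feeding the bounded sequence of abschnitte into the Montel Theorem~\ref{montel}, identifying the limit through its coefficients, and recovering the norm via the translation description of $\Hcal_1(\lambda)$. The only divergence is in the range $1<p\le\infty$: the paper abandons the Montel theorem there and instead uses the soft fact that the bounded sequence $(f_N)_N$ sits in $L_p(G)=(L_{p'}(G))'$, so Alaoglu--Bourbaki yields a weak star cluster point $f\in H_p^\lambda(G)$ with the right Fourier coefficients and $\|f\|_p\le\sup_N\|f_N\|_p$ by weak star lower semicontinuity. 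That route is cheaper -- in particular it makes no use of the hypothesis on $\lambda$ for $1<p\le\infty$, which is genuinely needed only at $p=1$ -- whereas your uniform use of Montel inherits that hypothesis throughout (harmless here, since the theorem assumes it anyway). Two small points to keep in mind: the statement of Theorem~\ref{montel} does not literally record that the coefficients of the limit are the coefficientwise limits (that is visible in its proof, or can be rederived from $\left(D|_{N_k}\right)_\varepsilon\to \tilde D_\varepsilon$ together with the continuity of the coefficient functionals $|a_n(D)|\le\|D\|_p$); and your handling of $p=\infty$ via weak star lower semicontinuity in the limit $\varepsilon\to 0$ is the right substitute for Theorem~\ref{translationI}, which is only stated for $p<\infty$.
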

Together with Theorem \ref{mainresultinfty} we obtain as an immediate consequence the following
\begin{Coro} Assume that $\lambda=(R,B)$ satisfies one of the conditions of Theorem \ref{montel}, and $D$ is a $\lambda$-Dirichlet series. Then $D \in \mathcal{D}_{\infty}(\lambda)$ if and only if  $D|_{N} \in \mathcal{D}_{\infty}(\lambda)$ for all $N $ and $\sup_{N} \|D|_{N}\|_{\infty}<\infty$. Moreover, in this case $\|D\|_{\infty}=\sup_{N \in \N} \|D|_{N}\|_{\infty}.$
\end{Coro}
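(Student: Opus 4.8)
The plan is to obtain the corollary as the $p=\infty$ instance of Theorem~\ref{Nabschnitt}, transported from $\mathcal{H}_\infty(\lambda)$ to $\mathcal{D}_\infty(\lambda)$. Indeed, taking $p=\infty$ in Theorem~\ref{Nabschnitt} already yields that $D\in\mathcal{H}_\infty(\lambda)$ if and only if $D|_N\in\mathcal{H}_\infty(\lambda)$ for all $N$ with $\sup_N\|D|_N\|_\infty<\infty$, together with $\|D\|_\infty=\sup_N\|D|_N\|_\infty$. The Abschnitt $D|_N$ is defined intrinsically on the level of the coefficients (it retains exactly the terms with $\lambda_n\in\operatorname{span}_{\mathbb{Q}}(b_1,\dots,b_N)$ and deletes the rest) and therefore does not depend on whether $D$ is viewed inside $\mathcal{H}_\infty(\lambda)$ or inside $\mathcal{D}_\infty(\lambda)$. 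Consequently the whole statement collapses to the single isometric identification $\mathcal{D}_\infty(\lambda)=\mathcal{H}_\infty(\lambda)$, and the first (and essentially only) step is to secure this identity under each of the three hypotheses carried over from Theorem~\ref{montel}.

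For the first two hypotheses this is a direct appeal to the earlier results. If $L(\lambda)=0$, then Theorem~\ref{conditions2} gives $\mathcal{D}^{ext}_\infty(\lambda)=\mathcal{D}_\infty(\lambda)$, and since trivially $L(\lambda)<\infty$, Theorem~\ref{mainresultinfty} delivers $\mathcal{D}_\infty(\lambda)=\mathcal{H}_\infty(\lambda)$ isometrically. If $\lambda$ satisfies $(LC)$ together with $L(\lambda)<\infty$, then again Theorem~\ref{conditions2} provides $\mathcal{D}^{ext}_\infty(\lambda)=\mathcal{D}_\infty(\lambda)$ and Theorem~\ref{mainresultinfty} applies verbatim.

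The remaining hypothesis, that $\lambda$ be $\mathbb{Q}$-linearly independent, is where I expect the only genuine friction: $\mathbb{Q}$-linear independence forces Bohr's theorem via Theorem~\ref{conditions2} but does \emph{not} force $L(\lambda)<\infty$, so Theorem~\ref{mainresultinfty} is not available off the shelf in this case. Here I would instead invoke the structural description already recorded for such frequencies, namely $\mathcal{H}_\infty(\lambda)=\mathcal{D}_\infty(\lambda)=\ell_1(\N)$ isometrically (from \cite{DefantSchoolmann2}), which is precisely the identity we need. Once $\mathcal{D}_\infty(\lambda)=\mathcal{H}_\infty(\lambda)$ isometrically is established in all three cases, one simply substitutes $\mathcal{D}_\infty(\lambda)$ for $\mathcal{H}_\infty(\lambda)$ throughout the $p=\infty$ statement of Theorem~\ref{Nabschnitt}, which reproduces the corollary together with the norm identity $\|D\|_\infty=\sup_N\|D|_N\|_\infty$. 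Thus the main obstacle is not an estimate but a matter of bookkeeping: identifying which hypothesis licenses $\mathcal{D}_\infty(\lambda)=\mathcal{H}_\infty(\lambda)$, with the $\mathbb{Q}$-linearly independent case demanding the detour through the explicit $\ell_1$-description rather than Theorem~\ref{mainresultinfty}.
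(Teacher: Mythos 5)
Your proposal is correct and follows essentially the same route as the paper, which obtains the corollary as an immediate consequence of Theorem~\ref{Nabschnitt} at $p=\infty$ combined with the isometric identification $\mathcal{D}_{\infty}(\lambda)=\mathcal{H}_{\infty}(\lambda)$ of Theorem~\ref{mainresultinfty}. Your extra bookkeeping for the $\mathbb{Q}$-linearly independent case --- where $L(\lambda)<\infty$ is not automatic and one must instead invoke $\mathcal{H}_{\infty}(\lambda)=\mathcal{D}_{\infty}(\lambda)=\ell_{1}(\N)$ from \cite{DefantSchoolmann2} --- is a legitimate point that the paper's one-line derivation passes over silently.
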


In view of the definition of the $\mathcal{H}_p(\lambda)$'s from \ref{zugcrash} we may reformulate Theorem~\ref{Nabschnitt} in terms of Hardy spaces on
$\lambda$-Dirichlet groups.

\begin{Theo}\label{Nabschnitt2}

Let $\lambda=(R,B)$ be a frequency satisfying one of the condition of Theorem \ref{montel},  $(G,\beta)$  a $\lambda$-Dirichlet group, and $1 \leq p \leq \infty$.
  Then for every sequence $(a_{n})_{n}$ in $\C$ the following are equivalent:
\begin{enumerate}
\item[(1)]$\exists ~f \in H_{p}^{\lambda}(G)\,\, \forall ~ n \in \N:~ \widehat{f}(h_{\lambda_{n}})=a_{n}$
\item[(2)] $\forall N \in \N ~\exists f_{N} \in H_{p}^{\lambda}(G) :$ $$\widehat{f_{N}}(h_{\lambda_{n}})=
    \begin{cases} a_{n} & \text{if }\lambda_{n} \in \operatorname{span}_{\mathbb{Q}}(b_{1}, \ldots, b_{N}), \\ 0
    & \text{else}
    \end{cases} $$
and $\sup_{N \in \N} \|f_{N}\|_{p}< \infty$.
\end{enumerate}
Moreover, in this case $\|f\|_{p}=\sup_{N \in \N} \|f_{N}\|_{p}$.
\end{Theo}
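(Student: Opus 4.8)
The plan is to read Theorem~\ref{Nabschnitt2} as the translation of Theorem~\ref{Nabschnitt} under the Bohr map. Recall from Definition~\ref{zugcrash} and the discussion following it that the Bohr map
\[
\Bcal \colon H_p^{\lambda}(G) \to \Hcal_p(\lambda), \quad f \mapsto \sum_{n \in \N} \widehat{f}(h_{\lambda_n}) e^{-\lambda_n s}
\]
is an isometric onto isomorphism which preserves Fourier- and Dirichlet coefficients. Hence everything should reduce to checking that the two conditions in the statement correspond, term by term, to the two conditions of Theorem~\ref{Nabschnitt} applied to the Dirichlet series $D := \sum_{n \in \N} a_n e^{-\lambda_n s}$.

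First I would fix the sequence $(a_n)$ and observe that condition~(1) is, by the very definition of $\Hcal_p(\lambda)$, equivalent to $D \in \Hcal_p(\lambda)$. Next I would identify, for each $N$, the function $f_N$ of condition~(2) with the $N$th Abschnitt $D|_N$. Indeed, since $\Bcal$ is a bijection, any $f_N \in H_p^{\lambda}(G)$ is uniquely determined by its Fourier coefficients $\widehat{f_N}(h_{\lambda_n})$, and the prescription in~(2) says precisely that these coefficients equal $a_n$ whenever $\lambda_n \in \operatorname{span}_{\mathbb{Q}}(b_1,\ldots,b_N)$ and vanish otherwise. Comparing with the definition of the $N$th Abschnitt given just before Theorem~\ref{Nabschnitt}, this means $\Bcal(f_N) = D|_N$, and by the isometry $\|f_N\|_p = \|D|_N\|_p$. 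In particular, the existence of the $f_N$ together with $\sup_N \|f_N\|_p < \infty$ is equivalent to the assertion that $D|_N \in \Hcal_p(\lambda)$ for all $N$ with $\sup_N \|D|_N\|_p < \infty$.

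With this dictionary in place I would simply invoke Theorem~\ref{Nabschnitt}: since $\lambda = (R,B)$ satisfies one of the conditions of Theorem~\ref{montel}, that theorem gives that $D \in \Hcal_p(\lambda)$ if and only if $D|_N \in \Hcal_p(\lambda)$ for all $N$ with $\sup_N \|D|_N\|_p < \infty$, together with the norm identity $\|D\|_p = \sup_N \|D|_N\|_p$. Transporting these statements back through the isometry $\Bcal$ yields the equivalence of~(1) and~(2) as well as the final equality $\|f\|_p = \sup_N \|f_N\|_p$. There is no genuine analytic obstacle at this stage, since the full weight of the argument already resides in Theorem~\ref{Nabschnitt} (and thereby in Lemma~\ref{embedding} and the Montel type Theorem~\ref{montel}). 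The only point demanding a little care is the bookkeeping verification that the abstractly prescribed $f_N$ really matches the $N$th Abschnitt $D|_N$; once the two coefficient prescriptions are lined up, this is immediate.
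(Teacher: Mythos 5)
Your dictionary between the two formulations is correct: the Bohr map $\Bcal$ is by construction an isometric bijection between $H_p^{\lambda}(G)$ and $\Hcal_p(\lambda)$ preserving coefficients, condition (1) is exactly $D=\sum a_n e^{-\lambda_n s}\in\Hcal_p(\lambda)$, and the prescribed $f_N$ corresponds exactly to $D|_N$. The problem is that this reduction is circular relative to the paper's logical structure. Theorem~\ref{Nabschnitt} is \emph{stated without proof}; the paper explicitly introduces Theorem~\ref{Nabschnitt2} as its reformulation and then attaches the actual proof to Theorem~\ref{Nabschnitt2}. So the version you invoke as the load-bearing result is itself only established, in the paper, \emph{via} the statement you are asked to prove. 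Your argument therefore contains none of the mathematical substance; it only records the (easy and correct) observation that the two statements are equivalent.

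What is actually missing is the following. For $(1)\Rightarrow(2)$ one either uses Remark~\ref{proj} (the H\"older estimate for the restriction $F|_N$ on $\widehat{\mathbb{Q}_d}^{\infty}$ via Theorem~\ref{summary} --- this part of your reduction is salvageable, since Remark~\ref{proj} is established independently), or, as the paper prefers, one convolves $f$ with an idempotent measure $\mu_N$ satisfying $\widehat{\mu_N}=\chi_{U_N}$ for the subgroup $U_N=\operatorname{span}_{\mathbb{Q}}(b_1,\ldots,b_N)\cap\widehat{G}$ (Lemma~\ref{idempotent}) and sets $f_N:=f*\mu_N$. The genuinely hard direction $(2)\Rightarrow(1)$, together with the inequality $\|f\|_p\le\sup_N\|f_N\|_p$, requires real work: for $p=1$ one applies the Montel-type Theorem~\ref{montel} to the sequence $D^N=\Bcal(f_N)$ to extract a limit $D\in\Hcal_1(\lambda)$, and then computes the norm via $\|D\|_1=\sup_{\varepsilon>0}\|D_\varepsilon\|_1$ from Theorem~\ref{translationI}; for $1<p\le\infty$ one uses weak-star compactness (Alaoglu--Bourbaki) of bounded sets in $L_p(G)$. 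None of this appears in your proposal, so as written it does not constitute a proof.
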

\begin{proof}
Although the implication  $(1) \Rightarrow (2)$ already follows from Remark \ref{proj}, we like to give another argument of different flavour. For each $N$ we consider the subgroup $$U_{N}:=\operatorname{span}_{\mathbb{Q}} (b_{1}, \ldots, b_{N})\cap \widehat{G} \subset \widehat{G}\,,$$
and take, according to Lemma~\ref{idempotent}, some  $\mu_{N} \in M(G)$ with $\|\mu_{N}\|=1$ such that $\widehat{\mu_{N}}=\chi_{U_{N}}$. Then the functions $f_{N}:=f*\mu_{N}\in H_{p}^{\lambda}(G)$ fulfill the claim.

It remains to prove that $(2) \Rightarrow (1)$: Let us start with the case $p=1$. Denote by  $D^{N}$  those  Dirichlet series which correspond  to the functions $f_{N}$. Then by Theorem \ref{montel} there is $D \in \Hcal_{1}(\lambda)$ and a subsequence $(D^{N_{k}})$ such that $D^{N_{k}}_{\varepsilon} \to D_{\varepsilon}$
in $\Hcal_{1}(\lambda)$
for all $\varepsilon>0$. Then  $f:=\Bcal(D)$ has the right Fourier coefficients and by Theorem~\ref{translationI}
 \begin{align*}
  \|f\|_{1}
  =\|D\|_{1}
  &
  =\sup_{\varepsilon >0} \|D_{\varepsilon}\|_{1}
  \le
\sup_{\varepsilon >0}\sup_{N} \|D^{N}_\varepsilon\|_{1}
\\&
=
\sup_{N}\sup_{\varepsilon >0} \|D^{N}_\varepsilon\|_{1}
=
\sup_{N}\|D^{N}\|_{1}
=\sup_{N } \|f_{N}\|_{1} .
\end{align*}
In the case $1<p\le \infty$ the sequence $(f_{N})_{N}$ is   bounded,
 and hence weakly bounded  in $L_{p'}(G)$. Then by the Alaoglu-Bourbaki theorem  there is $f \in H_{p}^{\lambda}(G)$ with the right Fourier coefficients and $\|f\|_{p}\le \sup_{N } \|f_{N}\|_{p}$.
\end{proof}

\subsection{Brothers Riesz  theorem} \label{brotherRieszchapter}

The aim of our last section  is to discuss the following brothers Riesz type theorem.

\smallskip

\begin{Theo} \label{brotherRiesz}
Let $\lambda$ be a frequency and  $(G,\beta)$  a $\lambda$-Dirichlet group. Then the Bohr map
$$\Bcal \colon H_{1}^{\lambda}(G) \to M_{\lambda}(G), ~~f \mapsto f ~dm$$
 is an onto isometry preserving the Fourier coefficients.
\end{Theo}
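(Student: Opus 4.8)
The plan is to separate the statement into a routine isometric-embedding part and the substantial surjectivity part, the latter being an instance of the generalized brothers Riesz (F.~and M.~Riesz) theorem.

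First I would verify that $\Bcal$ is an into isometry preserving Fourier coefficients. For $f\in H_1^\lambda(G)\subset L_1(G)$ the measure $f\,dm$ is bounded and regular with $\widehat{f\,dm}(\gamma)=\int_G \overline{\gamma}\,f\,dm=\widehat f(\gamma)$ for every $\gamma\in\widehat G$; thus its spectrum again lies in $\{h_{\lambda_n}\colon n\in\N\}$, so $f\,dm\in M_\lambda(G)$, and $\|f\,dm\|_{M(G)}=\int_G|f|\,dm=\|f\|_1$. Hence $\Bcal$ is isometric, injective, and preserves Fourier (thus Dirichlet) coefficients, and the whole theorem reduces to showing that $\Bcal$ is onto, i.e. that \emph{every} $\mu\in M_\lambda(G)$ is absolutely continuous with respect to the Haar measure $m$. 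Indeed, once $\mu=f\,dm$ with $f\in L_1(G)$ is known, the identity $\widehat f=\widehat\mu$ forces $\widehat f$ to be supported in $\{h_{\lambda_n}\}$, so $f\in H_1^\lambda(G)$ and $\Bcal(f)=\mu$.

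The point is that the Dirichlet group hypothesis provides exactly the order structure that this absolute-continuity statement requires. By Proposition~\ref{hx} the monomorphism $\widehat\beta\colon\widehat G\hookrightarrow\R$ realises $\widehat G$ as a subgroup of $\R$, and declaring $h_x$ positive when $x\ge0$ turns $\widehat G$ into a totally and Archimedeanly ordered group whose positive cone $\{h_x\colon x\ge0\}$ contains all the frequency characters $h_{\lambda_n}$ (recall $\lambda_n\ge0$). Moreover $G$ is connected, as already used in the excerpt. Consequently any $\mu\in M_\lambda(G)$ is an \emph{analytic} measure: it lives on a connected compact abelian group and its spectrum sits in the nonnegative cone of a real-embeddable dual.

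At this stage I would invoke the generalized F.~and M.~Riesz theorem for compact abelian groups with ordered dual (in the Archimedean case; see \cite{Rudin62} and the work of Helson--Lowdenslager, de~Leeuw--Glicksberg and Forelli on analyticity of measures), which yields precisely $\mu\ll m$ and completes the proof via the first paragraph. If instead one wishes to stay inside the machinery built here, I would first transport the problem to the Bohr compactification: applying the measure isometry $\Psi$ of Proposition~\ref{dualmapping1} to the canonical map $\pi_G\colon\overline{\R}\to G$ of Proposition~\ref{biggestDirichletgroup} gives an onto isometry $M_\lambda(G)\to M_\lambda(\overline{\R})$ which, by the extension property noted after Proposition~\ref{dualmapping1}, restricts to the $L_1$-isometry $H_1^\lambda(G)\to H_1^\lambda(\overline{\R})$; hence absolute continuity is preserved in both directions and it suffices to treat $G=\overline{\R}$, on which one may run the analytic-measure argument for the translation flow $\omega\mapsto\omega\,\beta_{\overline{\R}}(t)$, the nonnegativity of the spectrum being exactly the one-sided analyticity hypothesis. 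Either way, the genuine obstacle is concentrated entirely in this absolute-continuity step: establishing the F.~and M.~Riesz phenomenon in this generality is the crux, whereas checking that our $(G,\beta)$ and $\lambda$ meet its hypotheses is routine given the order embedding $\widehat G\hookrightarrow\R$ and the connectedness of $G$ already recorded above.
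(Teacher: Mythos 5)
Your reduction is the right one and matches the paper's: the into-isometry and Fourier-coefficient preservation are routine, and everything hinges on showing that every $\mu\in M_{\lambda}(G)$ is absolutely continuous with respect to Haar measure. The gap is in the step where you claim that the generalized brothers Riesz theorem for compact groups with (Archimedean) ordered dual ``yields precisely $\mu\ll m$.'' It does not. The Helson--Lowdenslager theorem (\cite{Rudin62}, \S 8.2.3) concludes only that the absolutely continuous and singular parts $\mu_a,\mu_s$ of an analytic measure are again analytic and that $\widehat{\mu_s}(0)=0$; it does not kill $\mu_s$. On $\T$ one upgrades this to $\mu_s=0$ by peeling off frequencies one at a time from the bottom of the spectrum, and that iteration is exactly what breaks down when $\widehat{G}$ is a dense subgroup of $\R$ (as for $\overline{\R}$ or $\widehat{\mathbb{Q}_d}^{N}$): the positive cone cannot be enumerated in increasing order. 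The hypotheses you verify --- spectrum in the nonnegative cone, Archimedean order embedding $\widehat{G}\hookrightarrow\R$, connectedness of $G$ --- are not sufficient for the conclusion you want, and nowhere do you use the one property of $M_{\lambda}(G)$ that actually makes the statement true: since $\lambda$ is a frequency, $\lambda_n\nearrow\infty$, so for every $\delta>0$ only finitely many characters $h_x$ with $x<\delta$ lie in the spectrum of $\mu$.

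That finiteness condition is precisely the hypothesis of the result the paper invokes, namely \cite[Theorem 4]{Doss}, which then gives $\mu\ll m$; and as the paper remarks, Doss's theorem is itself obtained from Helson--Lowdenslager \emph{plus} the local units of Lemma~\ref{localunits}, whose existence (polynomials $k_N$ with $\widehat{k_N}(h_{\lambda_j})=1$ for $j\le N$ and $\|k_N\|_1\le 1+\varepsilon$) again rests on the finiteness of $\{\lambda_1,\dots,\lambda_N\}$. The paper's self-contained alternative proof runs the same way: convolve $\mu$ with the $k_N$ to get a bounded sequence in $H_1^{\lambda}(G)$ and extract a limit $f$ via the Montel theorem~\ref{montel}, so that $f\,dm$ and $\mu$ have the same Fourier coefficients and $\|f\|_1\le\|\mu\|$. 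Your argument can be repaired simply by citing Doss and checking his actual hypothesis (which is immediate from $\lambda_n\to\infty$), but as written the appeal to the ordered-dual F.\ and M.\ Riesz theorem claims a conclusion that theorem does not deliver, and the ``routine hypothesis check'' omits the one hypothesis that carries the proof.
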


Note that
 for the frequency $\lambda=(n)$  this result is the classical brothers Riesz theorem on $\mathbb{T}$, and
for the ordinary case $\lambda= (\log n)$ it means that the Hardy space $H_{1}(\T^{\infty})$ isometrically equals  the space $M_{+}(\T^{\infty})$ of all bounded, regular and analytic Borel measures on $\T^{\infty}$ (due to Helson and  Lowdenslager from \cite{HelsonLowdenslager}, see also \cite[\S 13.1]{Defant}).

The proof of Theorem \ref{brotherRiesz} is an immediate consequence of  \cite[Theorem 4]{Doss}, which in terms of Dirichlet groups states that every  $\mu \in M(G)$
is absolutely continuous
with respect to the Haar measure on $G$, whenever for every $\delta>0$ the set $S_{\delta}(\mu):=\{ h_{x} \mid x<\delta \text{ and } \widehat{\mu}(h_{x})\ne 0 \}$ is finite. Clearly, any $\mu \in M_{\lambda}(G)$
satisfies this condition. \\

We now like to give a proof of Theorem~\ref{brotherRiesz} within the theory of Dirichlet series by applying the Montel Theorem \ref{montel}. But  then we of course have to  put the additional assumptions on $\lambda$
needed for that theorem. Moreover, we need the existence of so called `local units', which follows  as an immediate consequence of  \cite[Theorem 2.6.8]{Rudin62}.
\begin{Lemm} \label{localunits} Let $\varepsilon>0$ and $G$ a $\lambda$-Dirichlet group. Then there is a sequence of polynomials $(k_{N})_{N} \subset Pol(G)$ such that $\widehat{k_{N}}(h_{\lambda_{j}})=1$ for all $N \in \mathbb{N}$ and all $j=1,\ldots, N$, and moreover
$\sup_{N} \|k_{N}\|_{L^{1}(G)}\le 1+\varepsilon$.
\end{Lemm}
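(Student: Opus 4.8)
The plan is to read the statement through the Fourier algebra of the discrete dual $\widehat{G}$, invoke the existence of local units supplied by \cite[Theorem 2.6.8]{Rudin62}, and then correct the resulting units so that they become genuine $\lambda$-polynomials realizing the prescribed Fourier coefficients \emph{exactly} while keeping their $L_1$-norm below $1+\varepsilon$ uniformly in $N$.

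First I would recall that, since $G$ is compact, its dual $\widehat{G}$ carries the discrete topology, and the Fourier transform $f \mapsto \widehat{f}$ is an isometric isomorphism of $(L_{1}(G),\ast)$ onto the Fourier algebra $A(\widehat{G}) := \{\widehat{f}\colon f\in L_{1}(G)\}$ equipped with pointwise multiplication, because $\widehat{f\ast g}=\widehat{f}\,\widehat{g}$ and $\|\widehat{f}\|_{A(\widehat{G})}=\|f\|_{L_{1}(G)}$. For each $N$ the set $K_{N}:=\{h_{\lambda_{1}},\ldots,h_{\lambda_{N}}\}$ is a finite, hence compact, subset of $\widehat{G}$. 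Applying \cite[Theorem 2.6.8]{Rudin62} to $K_N$ and the given $\varepsilon>0$ produces a local unit $u_{N}\in A(\widehat{G})$ with $u_{N}\equiv 1$ on $K_{N}$ and $\|u_{N}\|_{A(\widehat{G})}\le 1+\tfrac{\varepsilon}{2}$; transporting this back, we obtain $g_{N}\in L_{1}(G)$ with $\widehat{g_{N}}(h_{\lambda_{j}})=1$ for $j=1,\ldots,N$ and $\|g_{N}\|_{1}\le 1+\tfrac{\varepsilon}{2}$. (The estimate $\|g_{N}\|_{1}\ge \|\widehat{g_{N}}\|_{\infty}\ge 1$ shows that the bound $1+\varepsilon$ is sharp up to $\varepsilon$.)

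The functions $g_{N}$ need not be polynomials, so I would correct them. Since $Pol(G)$ is dense in $L_{1}(G)$ (Proposition~\ref{densitytrigonometric} with $E=\widehat{G}$), choose for each $N$ a polynomial $P_{N}\in Pol(G)$ with $\|g_{N}-P_{N}\|_{1}<\tfrac{\varepsilon}{2(N+1)}$ and set
\[
k_{N}:=P_{N}+\sum_{j=1}^{N}\bigl(1-\widehat{P_{N}}(h_{\lambda_{j}})\bigr)\,h_{\lambda_{j}}\in Pol(G).
\]
As the characters $h_{\lambda_{1}},\ldots,h_{\lambda_{N}}$ are pairwise distinct, orthogonality gives $\widehat{h_{\lambda_{j}}}(h_{\lambda_{i}})=\delta_{ij}$ and $\|h_{\lambda_{j}}\|_{1}=1$, so the correction term only adjusts the coefficients on $K_{N}$ and yields $\widehat{k_{N}}(h_{\lambda_{i}})=1$ for all $i\le N$, as required. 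Using $|1-\widehat{P_{N}}(h_{\lambda_{j}})|=|\widehat{g_{N}}(h_{\lambda_{j}})-\widehat{P_{N}}(h_{\lambda_{j}})|\le\|g_{N}-P_{N}\|_{1}$, I then estimate
\[
\|k_{N}\|_{1}\le \|P_{N}\|_{1}+\sum_{j=1}^{N}\bigl|1-\widehat{P_{N}}(h_{\lambda_{j}})\bigr|
\le \Bigl(1+\tfrac{\varepsilon}{2}+\tfrac{\varepsilon}{2(N+1)}\Bigr)+N\cdot\tfrac{\varepsilon}{2(N+1)}\le 1+\varepsilon,
\]
which is uniform in $N$ and completes the argument.

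The only genuinely nontrivial ingredient is Rudin's local-units theorem; should one prefer a self-contained route, the norm-one approximate identity $u_{V}:=\chi_{V}/m(V)$ on $G$ does the same job, since its transforms satisfy $\widehat{u_{V}}(h_{\lambda_{j}})\to 1$ as the zero-neighbourhood $V$ shrinks, and these $u_{V}$ can be fed into the identical correction scheme in place of the $g_{N}$. Thus I expect no serious obstacle: the delicate point is purely bookkeeping, namely passing from approximate to \emph{exact} values of the finitely many coefficients while simultaneously staying inside $Pol(G)$ and keeping the bound $1+\varepsilon$ independent of $N$ — and this is precisely what the rank-$N$ correction term together with the $N$-dependent choice of approximation error accomplishes.
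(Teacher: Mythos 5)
Your argument is correct and rests on the same ingredient as the paper, which obtains the lemma directly as an immediate consequence of Rudin's Theorem~2.6.8; your additional polynomial-correction step (approximating the local unit in $L_{1}(G)$ by some $P_{N}\in Pol(G)$ and repairing the finitely many Fourier coefficients at cost $\sum_{j\le N}\bigl|1-\widehat{P_{N}}(h_{\lambda_{j}})\bigr|\le N\varepsilon/(2(N+1))$) is valid and only makes the deduction more robust, covering the case that the local unit supplied by Rudin's theorem is not already a trigonometric polynomial. Your closing observation is also sound: feeding the norm-one approximate identity $\chi_{V}/m(V)$ (whose transforms tend to $1$ at the finitely many characters $h_{\lambda_{1}},\dots,h_{\lambda_{N}}$ as $V$ shrinks) into the identical correction scheme gives a fully self-contained elementary proof that bypasses Rudin's theorem altogether.
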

Actually \cite[Theorem 4]{Doss} is an easy consequence of the generalized version of the brothers Riesz theorem by Helson and Lowdenslager (see \cite[\S 8.2.3.]{Rudin62}) and Lemma \ref{localunits}. So in this sense the Montel Theorem \ref{montel} replaces the former result in the proof of Theorem \ref{brotherRiesz} (but then for a smaller class of $\lambda$'s).

\begin{proof}[Proof of Theorem \ref{brotherRiesz} under the  assumptions on $\lambda$
needed in Theorem \ref{montel}]  Take some $\mu \in M_{\lambda}(G)$ and  $\varepsilon>0$. We choose $(k_{N})_{N}$ from Lemma \ref{localunits}. Then $(\mu *k_{N})_{N}$ is bounded in $H_{1}^{\lambda}(G)$ by $\|\mu\|(1+\varepsilon)$. By Theorem \ref{montel} (more precisely Corollary~\ref{schonwieder}) there is a subsequence $(N_{k})_{k}$ and $f \in H_{1}^{\lambda}(G)$ such that $p_{u}*\mu*k_{N_{k}}$ converges to $p_{u}*f$ for all $u>0$. This implies that $\widehat{f}(h_{\lambda_{n}})=\widehat{\mu}(h_{\lambda_{n}})$ for all $n$, and so with Lemma \ref{limituto0}
we get that
$$\|f\|_{1}=\lim_{u\to 0} \|p_{u}*f\|_{1} \le (1+\varepsilon)\|\mu\|$$
for all $\varepsilon>0$. This proves the claim.
\end{proof}
We like to mention that Theorem \ref{mainresultinfty}, \ref{brotherRiesz} and \ref{Nabschnitt} have extensions to the setting of vector valued general Dirichlet series (see \cite{Schoolmann}).

\end{document}